\newcommand{\Exp}{\mathbb{E}}
\renewcommand{\P}{\mathbb{P}}
\newcommand{\Bin}{\mathsf{Bin}}
\tikzstyle{dot}=[circle,fill,black,inner sep=1pt]
\tikzset{
  on each segment/.style={
    decorate,
    decoration={
      show path construction,
      moveto code={},
      lineto code={
        \path [#1]
        (\tikzinputsegmentfirst) -- (\tikzinputsegmentlast);
      },
      curveto code={
        \path [#1] (\tikzinputsegmentfirst)
        .. controls
        (\tikzinputsegmentsupporta) and (\tikzinputsegmentsupportb)
        ..
        (\tikzinputsegmentlast);
      },
      closepath code={
        \path [#1]
        (\tikzinputsegmentfirst) -- (\tikzinputsegmentlast);
      },
    },
  },
  mid arrow/.style={postaction={decorate,decoration={
        markings,
        mark=at position .5 with {\arrow[#1]{stealth}}
      }}},
  early arrow/.style={postaction={decorate,decoration={
        markings,
        mark=at position .2 with {\arrow[#1]{stealth}}
      }}},
}
\tikzstyle{int}=[draw, fill=blue!15, minimum size=2em]
\tikzstyle{init} = [pin edge={to-,thin,black}]
\def\alternatecolorred{%
    \pgfkeysalso{red}%
    \global\let\alternatecolor\alternatecolorblue 
}
\def\alternatecolorblue{%
    \pgfkeysalso{blue}%
    \global\let\alternatecolor\alternatecolorred 
}
\newcommand{\altred}{\let\alternatecolor\alternatecolorred 
\tikzset{every edge/.append code = {%
    \global\let\currenttarget\tikztotarget 
    \pgfkeysalso{append after command={(\currenttarget)}}
			\alternatecolor
}}
}
\newcommand{\altblue}{\let\alternatecolor\alternatecolorblue 
\tikzset{every edge/.append code = {%
    \global\let\currenttarget\tikztotarget 
    \pgfkeysalso{append after command={(\currenttarget)}}
			\alternatecolor
}}
}
\tikzstyle{vertexdot}=[circle, draw, fill=black, minimum size=3,inner sep=0pt]
\newtheorem{theorem}{Theorem}
\newtheorem{lemma}{Lemma}[section]
\newtheorem{proposition}{Proposition}[section]
\theoremstyle{definition}
\newtheorem{remark}{Remark}
\newtheorem{definition}{Definition}
\newcommand{\prob}[1]{ \mathbb{P}\left( #1 \right) }
\newcommand{\tProb}{{\tilde{\mathbb{P}}}}
\newcommand{\tprob}[1]{{ \tProb\left( #1 \right) }}
\newcommand{\Var}{\mathsf{Var}}
\newcommand{\pth}[1]{\left( #1 \right)}
\newcommand{\sth}[1]{\left\{ #1 \right\}}
\newcommand{\indc}[1]{{\mathbf{1}_{\left\{{#1}\right\}}}}
\newcommand{\sfG}{{\mathsf{G}}}
\newcommand{\calB}{{\mathcal{B}}}
\newcommand{\calD}{{\mathcal{D}}}
\newcommand{\calE}{{\mathcal{E}}}
\newcommand{\calI}{{\mathcal{I}}}
\newcommand{\calN}{{\mathcal{N}}}
\DeclareMathAlphabet{\varmathbb}{U}{bbold}{m}{n}
\newcommand{\bd}{{\mathbf{d}}}
\newcommand{\bs}{{\mathbf{s}}}
\newcommand{\bt}{{\mathbf{t}}}
\newcommand{\bD}{{\mathbf{D}}}
\newcommand{\bS}{{\mathbf{S}}}
\newcommand{\bT}{{\mathbf{T}}}
\renewcommand{\tilde}{\widetilde}
\newcommand{\R}{\mathbb{R}}
\newcommand{\N}{\mathbb{N}}
\newcommand{\Z}{\mathbb{Z}}
\newcommand{\bm}{\boldsymbol}
\newcommand{\ER}{Erd\H{o}s-R\'{e}nyi\xspace}
\newcommand{\SBM}{\mathsf{SBM}}
\newcommand{\dd}{{\mathrm{d}}}
\newcommand{\Vp}[1]{ V_{+}^{(#1)} }
\newcommand{\Vm}[1]{ V_{-}^{(#1)} }
\newcommand{\WW}[2]{ W_{#1}^{(#2)} }
\newcommand{\bfW}{{\mathbf{W}}}
\newcommand{\scrP}{{\mathscr{P}}}
\newcommand{\scrM}{{\mathscr{M}}}
\newcommand{\scrT}{{\mathscr{T}}}
\newcommand{\sP}{{\mathscr{P}}}
\newcommand{\sM}{{\mathscr{M}}}
\newcommand{\sT}{{\mathscr{T}}}
\newcommand{\sN}{{\mathscr{N}}}
\newcommand{\sE}{{\mathscr{E}}}
\newcommand{\sX}{{\mathscr{X}}}
\newcommand{\D}{\Delta'}
\newcommand{\E}{\mathbb{E}}
\newcommand{\F}{\mathcal{F}}
\renewcommand{\P}{\mathbb{P}}
\DeclarePairedDelimiterX{\infdivx}[2]{\Big(}{\Big)}{%
  #1\;\delimsize\|\;#2%
}
\newcommand{\DD}{D\infdivx}
\renewcommand{\DD}[2]{D \pth{\left. #1 \right\| #2 }}
\renewcommand{\ge}{\geqslant}
\renewcommand{\le}{\leqslant}
\renewcommand{\geq}{\geqslant}
\renewcommand{\leq}{\leqslant}
\newcommand{\ee}{\varepsilon}
\begin{document}

\pgfdeclarelayer{background}
\pgfdeclarelayer{foreground}
\pgfsetlayers{background,main,foreground}

\title{Consensus on Dynamic Stochastic Block Models:\\Fast Convergence and Phase Transitions
}

\author{
Haoyu Wang \thanks{Department of Mathematics, Yale University, New Haven, CT 06520, USA,  \texttt{haoyu.wang@yale.edu}}\and
Jiaheng Wei \thanks{Data Science and Analytics Thrust, the Hong Kong University of Science and Technology (Guangzhou), Guangzhou, China,  \texttt{jiahengwei@hkust-gz.edu.cn}}
\and
Zhenyuan Zhang \thanks{Department of Mathematics, Stanford University, Stanford, CA 94305, USA,  \texttt{zzy@stanford.edu}}
}

\date{\today}

\maketitle

\begin{abstract}
We introduce two models of consensus following a majority rule on time-evolving stochastic block models, in which the network evolution is Markovian or non-Markovian.  Under the majority rule, in each round, each agent simultaneously updates their  opinion according to the majority of their  neighbors. In contrast to the classic setting, the dynamics is not purely deterministic and resamples the connections at each step. In the \emph{Markovian model}, connections are resampled at each step and each agent updates their opinion via the majority rule.  We prove a \emph{power-of-one} type result, i.e., any initial bias leads to a non-trivial advantage of winning in the end,  uniformly in the size of the network. 
In the \emph{non-Markovian   model}, a connection is resampled when at least one of them changes opinion and is otherwise kept the same. We identify the phase-transition threshold, up to the
second-order leading term, between halting and fast convergence to consensus.

\medskip
\noindent \textbf{Keywords}: majority dynamics, random graph, power-of-one

\end{abstract}



\section{Introduction}

In the theory of distributed computing, consensus refers to the following problem: given a collection of agents holding different opinions, the agents interact and update their opinions under certain rules with the goal of reaching unanimity. One of the most classic and straightforward rules for updating opinions is the \emph{majority dynamics}, where in each round, all agents simultaneously update their opinions based on the majority of their neighbors. Typically, the connections between agents are modeled mathematically using graphs/networks, where vertices represent agents and edges represent connections. Majority dynamics has a long history and allows numerous applications, including economics \cite{ellison1993rules,bala1998learning}, psychology \cite{cartwright1956structural}, biophysics \cite{mcculloch1943logical}, and social choice theory \cite{granovetter1978threshold,nguyen2020dynamics}. See also \cite{mossel2017opinion} for a more recent and detailed account.

Recently there has been surging interest in majority dynamics on random networks.  Among various models of majority dynamics, two classes of formulations are particularly popular and technically tractable. We briefly describe their settings as follows. 

\begin{enumerate}[(a)]
    \item \emph{Majority dynamics on a static random graph}. Consider the \ER graph $\sfG(n,p)$ representing the connections  of the agents, which is fixed throughout the dynamics. Typically the initial opinions held by the agents are assumed to be biased (meaning that  one opinion is held by more people than any of the others), either in a deterministic way \cite{tran2020reaching,sah2021majority,berkowitz2022central} or randomly \cite{benjamini2016convergence,fountoulakis2020resolution,zehmakan2020opinion,chakraborti2021majority}. These opinion processes are non-Markovian at the level of individual vertices, since later updates depend on the previous opinion configuration, but the graph itself has no memory because it is sampled once and then kept fixed. In the dense regime, the recent breakthrough of \cite{sah2021majority} proved a \emph{power-of-one} result, namely any initial bias leads to a nontrivial advantage of winning in the end, uniformly in the size of the network. In sparse regimes, related power-of-few and
    random-initialization results have been established under progressively
    weaker density assumptions \cite{chakraborti2021majority,TV25,kim2025new,jaffe2025new}.
    For more than two opinions, rapid unanimity on $G(n,p)$ has also been proved
    in sparse regimes \cite{CF25}. 
    For other random graph models, see e.g., \cite{gartner2018majority} for random regular graphs and \cite{shang2021note} for inhomogeneous random graphs.

       \item \emph{$k$-majority dynamics}. Consider a (possibly random) graph $G$
    and a fixed integer $k$. In each round, each agent randomly samples $k$
    connections from its neighbors in $G$, with an initial bias on the opinions.
    In the literature, $G$ may refer to the complete graph
    \cite{doerr2011stabilizing,becchetti2016stabilizing,ghaffari2018nearly,
    mukhopadhyay2020voter}, expander graphs as well as \ER graphs
    \cite{cooper2014power,cooper2015fast,cooper2016fast}, and stochastic block
    models \cite{cruciani2019distributed,shimizu2021phase}. In this context,
    the basic unbiased model typically shows that any initial advantage leads
    to consensus. Related biased or noisy communication variants of
    $k$-majority dynamics have also been studied recently
    \cite{CMQR23,DZ25}.
\end{enumerate}

A common feature of these models is the symmetry of connections between agents with or without the same opinion. For example, in majority dynamics on $\sfG(n,p)$, both of the two types of connections are sampled with probability $p$. Nevertheless, there is no reason a priori that an agent is equally likely to draw connections with those holding the same opinion and those with a different opinion. In this paper, we build majority dynamics models beyond the symmetric setting.

To highlight the community structures of those with the same opinion, we introduce two parameters $p,q\in[0,1]$ to represent the connecting probabilities, where $p$ is the probability that two agents with the same opinion are connected, and $q$ is the probability that two agents with different opinions are connected. Motivated by homophily and social-balance phenomena in social networks and social choice theory \cite{cartwright1956structural,granovetter1978threshold,nguyen2020dynamics}, we will assume $p>q$: agents who currently share an opinion are more likely to interact than agents who currently disagree. This structure is captured by the \emph{stochastic block model} (SBM). Introduced by \cite{holland1983stochastic}, SBM is a typical model of inhomogeneous random graphs; see \cite{bollobas2007phase}. The simplest case of an SBM considers a bipartition of the vertices into two blocks, where  edges within a certain block are independently sampled with probability $p$ and otherwise with probability $q$. For more detailed applications in machine learning and computer science, see \cite{abbe2017community}. 

We remark that although there is literature concerning $k$-majority dynamics on SBM, these works do not reflect the correspondence between the blocks and different opinions. The SBM appears there only as a generic prototype of the underlying graph of interest.

\subsection{Models}

To be more precise, our models can be mathematically formulated as follows. Throughout this paper, for simplicity, we consider two opinions, denoted by $+$ and $-$. Without loss of generality, we assume an initial bias with $n+\Delta$ opinions $+$ and $n$ opinions $-$, where $\Delta=\Delta_n$ is a positive integer that may depend on $n$. We are interested in the asymptotic behavior of the model as $n\to\infty$.

\begin{definition}[Majority dynamics]\label{def:Majority_Dynamics}
Given a graph $ G=(V,E) $ whose vertices are indexed by $ [N] :=\{1,\dots,N\} $, each vertex is associated with an initial binary opinion labeled by $ W_i = \WW{i}{0} \in \{\pm 1\} $. Consider a bipartition $ V= \Vp{0} \cup \Vm{0} $, where $ \Vp{0} = \{i \in [N]: \WW{i}{0}=1 \} $ and $ \Vm{0} = \{i \in [N] : \WW{i}{0}=-1 \} $. The \emph{majority dynamics} on $ G $ refers to the following process: At every time step, each vertex updates its opinion based on the majority of its neighbors, i.e.,
\begin{equation}\label{eq:Majority_Rule}
\WW{i}{t+1}=
\left\{
\begin{aligned}
& \mathrm{sign} \pth{\sum_{(i,j)\in E} \WW{j}{t}},& & \mbox{if} \  \sum_{(i,j)\in E} \WW{j}{t} \neq 0 ;\\
& \WW{i}{t}, & & \mbox{otherwise}.
\end{aligned}
\right.
\end{equation}
Let
$$ \Vp{t} := \sth{i \in [N]: \WW{i}{t}=+1},\ \ \mbox{and} \ \  \Vm{t} := \sth{i \in [N]: \WW{i}{t}=-1}.  $$
We say the opinion $ + $ or $ - $ \emph{wins at day $ t $} if $ |\Vp{t}|=N $ or $ |\Vm{t}|=N $, respectively.
\end{definition}

\begin{definition}[Stochastic block models]
For positive integers $ m,n>0 $ and $ p,q \in [0,1] $, the \emph{stochastic block model} $ G \sim \SBM(m,n,p,q) $ is constructed as follows. The graph $ G $ has $ m+n $ vertices, labeled by the elements of $ [m+n]  $. Each vertex $ i \in [m+n] $ has a community label $ W_i \in \{-1,+1\} $. The two communities $ V_+:=\{i \in [m+n]:W_i=+1\} $ and $ V_- := \{i \in [m+n]:W_i =-1\} $ satisfy $ |V_+|=m $ and $ |V_-|=n $. For distinct $ i,j \in [m+n] $, if $ W_i W_j=1 $, then the edge $ (i,j) $ is in $ G $ with probability $ p $; otherwise the edge $ (i,j) $ is in $ G $ with probability $ q $.
\end{definition}

In classic literature on majority dynamics, the underlying graph is fixed throughout the process, and in our framework, such static models do not reflect the evolution of block structures. For example, if an agent changes their opinion from $-$ to $+$, the marginal distribution of the influence from their neighbors (i.e., edge connections) does not change, which was sampled as if they had opinion $-$. Therefore, to maintain the community structure, our graph needs to evolve in time to be consistent with the updates of the opinions. The most natural idea is to resample the edges at each round.  Depending on the extent to which  the edges are resampled, we introduce the following two models. 

\begin{definition}\label{def:Model}
For a graph $ G=(V,E) $ whose vertices are labeled by $ [2n+\Delta] $ with the binary opinions $ W_i \in \sth{\pm 1} $, let $ \bfW := (W_1,\dots,W_{2n+\Delta}) $ denote the sequence of opinions of each vertex. Given an initialization with $ n+\Delta $ vertices with opinion $ + $ and $ n $ vertices with opinion $ - $ on the graph $ \SBM(n+\Delta,n,p,q) $, consider the following coupled dynamics $ \pth{G_t,\bfW_t} $ of the vertex opinions and the graph, with $G_0=G$ and $\bfW_0=\bfW$:
\begin{enumerate}[(i)]
    \item (\emph{Markovian model}). For each day $ t \in\N $, after the opinions $ \bfW_t $ are determined based on $ (G_{t-1},\bfW_{t-1}) $, we update the graph $ G_{t}=(V,E_t) $ by resampling the edges between all pairs $ (i,j) $ based on the law of $ \SBM(|\Vp{t}|,|\Vm{t}|,p,q) $. The opinions $ \bfW_{t+1} $ are computed via the majority rule \eqref{eq:Majority_Rule} on the updated graph $ (G_t,\bfW_t) $.
    \item (\emph{Non-Markovian model}). For each day $ t \in\N$, after the opinions $ \bfW_t $ are determined based on $ (G_{t-1},\bfW_{t-1}) $, we update the graph $ G_t = (V,E_t) $ in the following way. For any pair $ (i,j) $, if $ \WW{i}{t}=\WW{i}{t-1} $ and $ \WW{j}{t}=\WW{j}{t-1} $, then keep  the connectivity condition between these nodes, i.e.,
    $$ \indc{(i,j) \in E_t} = \indc{(i,j) \in E_{t-1}}. $$
    Otherwise, we resample the pair $ (i,j) $ based on their updated opinions using the law of $ \SBM(|\Vp{t}|,|\Vm{t}|,p,q) $, i.e.,
    \begin{equation*}
    \prob{(i,j) \in E_t}=
    \left\{
    \begin{aligned}
    & p,& & \mbox{if} \  \WW{i}{t} \WW{j}{t}=1 ;\\
    & q, & & \mbox{otherwise}.
    \end{aligned}
    \right.    
    \end{equation*}
    The opinions $ \bfW_{t+1} $ are computed via the majority rule \eqref{eq:Majority_Rule} on the updated graph $ (G_t,\bfW_t) $.
\end{enumerate}
\end{definition}

\begin{center}
\begin{minipage}{0.48\linewidth}
\includegraphics[width=\linewidth]{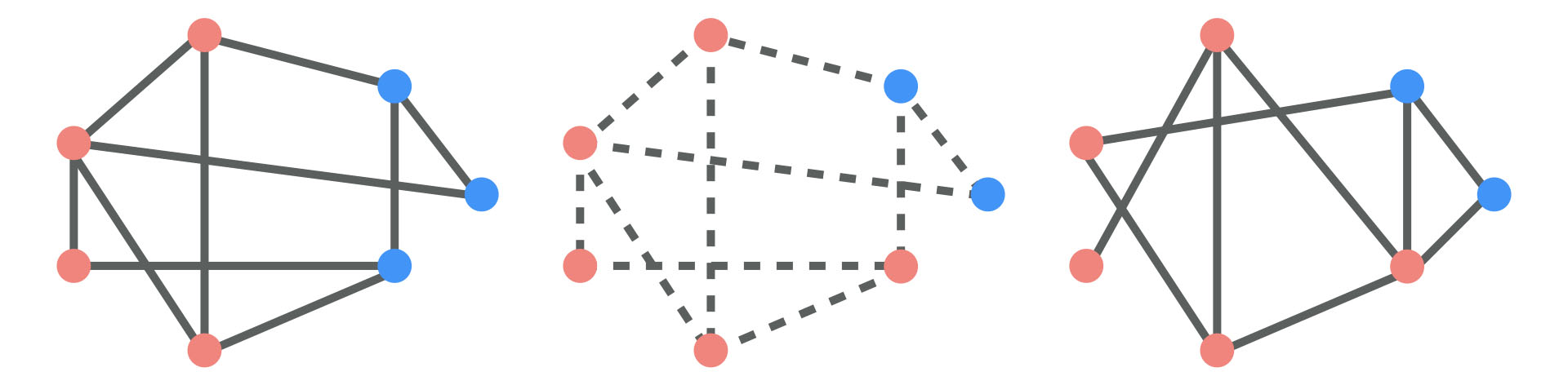}
\captionof{figure}{The Markovian model: all old connections are resampled}
\end{minipage}%
\hfill
\begin{minipage}{0.49\linewidth}
\includegraphics[width=\linewidth]{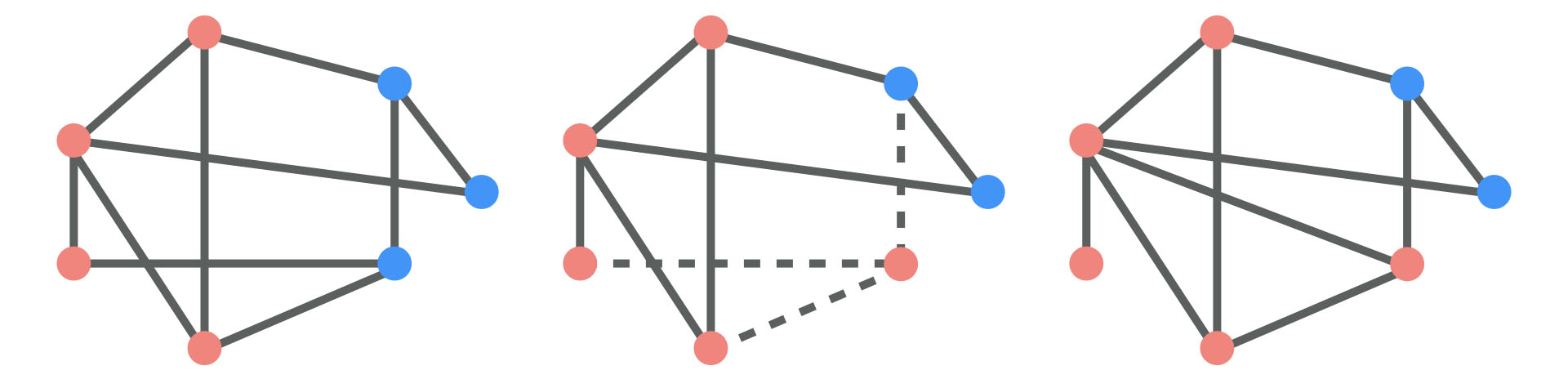}
\captionof{figure}{The non-Markovian model: only the dashed old connections are resampled}
\end{minipage}
\end{center}

\begin{remark}
In both models, our graph $G_t=(V,E_t)$ is a \emph{dynamic graph}, a graph whose topology evolves with time, in the sense of \cite{clementi2015distributed}. Dynamic stochastic block models were previously considered in the context of community detection by \cite{clementi2015distributed}, where the graph changes over time and the main task is to recover the evolving communities. More generally, Markovian networks have been investigated in the context of information spreading \cite{becchetti2011information} and random walks \cite{cai2020random}. In these works the time evolution of the graph is part of the environment on which a process runs. Our models are different in that the graph evolution is coupled to the opinion dynamics: the current opinions determine the edge-resampling probabilities, and in the non-Markovian model the memory of an edge depends on whether its endpoints changed opinions.
\end{remark}
 
\begin{remark}
For the Markovian model, it is easy to see that the underlying graph is always marginally a stochastic block model, with two blocks given by agents holding the two different opinions. Nevertheless, the marginal distribution of the non-Markovian model is in general \emph{not} simply a stochastic block model due to the dependency of the connections on the prior information of neighbors' opinions, making it harder to analyze.
\end{remark}

By definition, the evolution of the Markovian model on different days is independent, and hence the number of vertices with opinion $+$, $|V^{(t)}_+|$, $t\in\N_0$, forms a Markov chain on $\{0,\dots,2n+\Delta\}$ with absorbing states $\{0,2n+\Delta\}$. This explains the names of our models.

One motivation for our models is from social choice theory and opinion formation on social networks \cite{granovetter1978threshold,mossel2017opinion,nguyen2020dynamics}. In an election-like interpretation, the two opinions represent two parties. Same-party agents are more likely to communicate with one another, while cross-party communication is less frequent. The additional feature in our model is that the party labels are not fixed communities: they are the opinions produced by the dynamics. Thus, when an agent changes opinion, the local network around that agent should be rebuilt to reflect the updated opinion profile. In the non-Markovian model, once a pair of agents interact with each other, their social connection will not break unless one or both of them change opinions. The Markovian model, in contrast, can be regarded as the other extreme case, in which all agents interact with each other randomly at each round. It would be an interesting open question to study a model where the randomness of the resampling is intermediate.

\subsection{Main Results}

Let $ G=(V,E) \sim \SBM(n+\Delta,n,p,q) $, where $p>q$.  Since the case $ q=0 $ is trivial, we will always assume $p,q>0$. Indeed, if $ q=0 $, then $ G $ is disconnected and separated by the two disjoint communities, meaning that there will not be opinion changes.
Throughout the paper, for simplicity we fix two constants $(p,q)$ independent of $n$ with $0<q<p\leq 1$, but the initial bias  $\Delta=\Delta_n\in\N$ may depend on $ n $. We remark that our techniques generalize easily to the case $ (\log n)^{-c} \leq p,q \leq 1-(\log n)^{-c} $ with $p/q\geq 1+\delta$ for some absolute constant $ c>0 $ and any $\delta>0$, as well as the corresponding  cases with $q>p$. We now define some events that are of interest. 
\begin{definition}[Outcomes of the dynamics]
For the coupled majority dynamics $ (G_t,\bfW_t) $ defined in Definition \ref{def:Model}, consider the following events.
\begin{enumerate}[(i)]

\item The opinion $ + $ wins at day $ t $,
$$ \scrP_t := \sth{|\Vp{t}| = 2n+\Delta},\ t\in\N. $$
\item The opinion $ + $ wins eventually,
$$ \scrP := \sth{\lim_{t \to \infty} |\Vp{t}| =2n+\Delta }=\bigcup_{t\in\N} \sP_t. $$
\item The opinion $ - $ wins at day $ t $,
$$ \sM_t := \sth{ |\Vm{t}| = 2n+\Delta },\ t \in \N. $$
\item The opinion $ - $ wins eventually,
$$ \scrM := \sth{\lim_{t \to \infty}|\Vm{t}| = 2n+\Delta } = \bigcup_{t \in \N} \sM_t. $$
\item The dynamics never reaches consensus,
$$ \scrT := \pth{\scrP \cup \scrM}^c. $$
\end{enumerate}
\end{definition}

When we prove $\sT$ for the non-Markovian model below, the arguments establish the stronger conclusion that the process actually halts, in the sense that eventually no vertex changes opinion. The definition above is phrased as non-consensus so that it does not exclude, by definition, other non-consensus pathologies such as periodic behavior.

\begin{remark}\label{remark:heuristics}
Let us give some heuristics before presenting the main results. In the majority dynamics on SBM, the agents are more stubborn to be influenced by others with a different opinion than in the \ER model.  Suppose that we start from $|V^{(0)}_+|=n+\Delta$ and $|V^{(0)}_-|=n$, and we wish that the opinion $+$ wins eventually. Any agent with opinion $-$  will receive $\Bin(n+\Delta,q)$ many opinions $+$ and $\Bin(n-1,p)$ many opinions $-$. After taking expectation, this yields the natural guess that when $(n+\Delta)q\geq (n-1)p$ or (asymptotically) equivalently  $\Delta>(p-q)n/q$, the opinion $+$ will win, which is not difficult to confirm. On the other hand, having $\Delta\geq 1$  already breaks the symmetry between different opinions. So we expect for both models that the opinion $+$ dominates at some threshold between $\Delta=1$ and $\Delta=(p-q)n/q$, depending on the dependency of memories in the graph evolution.
\end{remark}

Our first result shows that the Markovian model exhibits the power-of-one behavior. Throughout the paper, constants denoted by $L=L(p,q)>0$ are positive and explicitly computable from $p,q$; the value of $L$ may change from one occurrence to the next unless a fixed value is specified.

\begin{theorem}\label{thmmodel1}
Consider the Markovian model on $ \SBM(n+\Delta,n,p,q) $, where $0<q<p\leq 1$.
\begin{enumerate}[(i)]
    \item\label{1i} Uniformly for $\Delta>0$ and $n\in\N$, there exists $L= L(p,q)>0 $ such that
    $$\P(\sP)\geq \frac{1}{2}+\frac{1}{L},$$
    i.e., the opinion $ + $ wins eventually with probability at least $ \tfrac{1}{2}+\tfrac{1}{L} $.
    \item\label{1ii} If $\Delta_n\to\infty$, then $\P(\sP)\to 1$, i.e., the opinion $ + $ wins asymptotically almost surely.
\end{enumerate}
\end{theorem}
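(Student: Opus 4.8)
The plan is to exploit the Markov-chain structure recorded above. Write $N:=2n+\Delta$ and $a_t:=|\Vp{t}|$, so $(a_t)$ is a Markov chain on $\{0,\dots,N\}$ absorbed at $0$ and $N$. Since from any interior state a single vertex can flip while all others stay put with positive (if tiny) probability, every interior state communicates with an absorbing one, so absorption is almost sure; hence $\P[\scrT]=0$ and $\P[\sP]+\P[\sM]=1$. Let $f(a):=\P[\sP\mid a_0=a]$. The relabeling symmetry $+\leftrightarrow-$ gives $f(a)+f(N-a)=1$, and a monotone coupling in the initial number of $+$ vertices shows $f$ is nondecreasing. Since $a_0=n+\Delta$ sits exactly $\Delta/2$ above the balance point $N/2$, both parts reduce to controlling how fast $f$ rises above balance: part \eqref{1i} needs the increment of $f$ across the middle to be $\ge 1/L$, and part \eqref{1ii} needs $f\to1$ once the starting margin grows.

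Next I would compute the one-step transition via differences of binomials. In a configuration with $a$ plus- and $b:=N-a$ minus-vertices, a $+$ vertex flips with probability $P_X=\P[\Bin(a-1,p)<\Bin(b,q)]$ and a $-$ vertex with probability $P_Y=\P[\Bin(a,q)>\Bin(b-1,p)]$, so the upward and downward rates are $u_a\approx bP_Y$ and $d_a\approx aP_X$. The decisive feature for constant $q<p$ is that near balance these are $e^{-\Theta(n)}$ — the communities are \emph{sticky} and the chain moves only on an exponential time scale — yet what governs absorption is their ratio. Writing $m=2a-N$ for the margin and noting that a double flip in one step has probability $O((nP_X)^2)$, negligible against a single flip throughout the near-balance phase, I would establish by a Chernoff/local-CLT expansion of the two binomial-difference tails that
$$\frac{d_a}{u_a}=\frac{aP_X}{bP_Y}=\exp\pth{-\gamma m + o(m)},\qquad \gamma=\gamma(p,q)=\Theta(1)>0,$$
where crucially $\gamma$ is an order-one constant (the $a/b$ prefactor contributes only $O(m/n)$). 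This order-one sensitivity of the rate ratio to a single extra vote is exactly the source of the power-of-one phenomenon.

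I would then feed this into the discrete scale function of the birth–death approximation, $\psi(a):=\sum_{k<a}\prod_{j\le k}\frac{d_j}{u_j}\approx\sum_{k<a}\exp(-\gamma m_k^2/4)$, whose increments form a Gaussian-shaped profile of width $\Theta(1)$ in the margin, centered at balance. Showing that $\psi(a_t)$ is a martingale for the true kernel up to an error controlled by the negligible multi-flip and vertex-correlation corrections, optional stopping gives $\P[\sP]=\psi(a_0)/\psi(N)$ up to $o(1)$. Because the total mass $\psi(N)$ is concentrated on the $\Theta(1)$ margin values near $0$, the central increment alone is a constant fraction of it, which yields $\P[\sP]\ge\tfrac12+\tfrac1L$ for every $\Delta\ge1$, proving \eqref{1i}. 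For \eqref{1ii}, once $\Delta\to\infty$ the starting margin exceeds the $\Theta(1)$ Gaussian width, so $\psi(a_0)$ already captures essentially all of the positive-margin mass, i.e.\ $\psi(a_0)/\psi(N)\to1$.

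The main obstacle is the second and third steps: isolating the order-one constant $\gamma$ through uniform estimates on the two binomial-difference tails, and — more delicately — upgrading the heuristic birth–death/scale-function picture to rigorous martingale (or sandwiching sub-/super-martingale) bounds for the genuine chain. The latter requires controlling the higher jumps and the weak correlations between vertices' flips across the \emph{entire} state space, including the fast-cascade region far from balance where the drift becomes macroscopic; one must verify that this region contributes negligibly to $\psi$ so that the near-balance Gaussian profile indeed dictates the absorption probabilities.
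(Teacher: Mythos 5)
Your proposal has the same skeleton as the paper's proof: reduce to the Markov chain $|V^{(t)}_+|$, show that near balance the chain effectively moves by single steps with an up/down ratio bounded away from $1$, conclude by a ruin-type (equivalently, scale-function) computation combined with the $+\leftrightarrow-$ symmetry, and handle the far-from-balance region separately. One difference of emphasis: your central estimate $d_a/u_a=\exp(-\gamma m+o(m))$ is considerably stronger than what is needed, and it is the hardest part of your plan to make rigorous (uniform Chernoff/local-CLT expansions for the difference of two binomials with unequal parameters, uniformly in the margin). The paper's Lemma \ref{lemma:probratio} gets away with the one-sided bound $p_r/p_\ell\geq 1+1/L$ uniformly for leads up to $n/L$, proved by elementary stochastic domination (coupling $T'=S+\epsilon$ with $\epsilon\sim\Bin(1,q)$) and log-concavity of the pmf of $S-D_+$; a uniform ratio bound $1+1/L$ already makes your scale-function increments geometrically decaying away from the center, which is all that parts (\ref{1i}) and (\ref{1ii}) actually use.

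The genuine gap is your treatment of the cascade region. The statement you propose to verify --- that this region ``contributes negligibly to $\psi$'' --- is not the right one and cannot close the argument: outside the near-balance window the chain is not even approximately a birth--death chain (conditional on moving, it makes macroscopic multi-vertex jumps), so $\psi(a_t)$ is not close to a martingale there, and optional stopping over the whole state space is unjustified no matter how small the increments of $\psi$ are in that region. The correct repair is to stop the walk at the exit of a window around balance, run the scale-function/ruin argument only inside the window (where single flips dominate; one also needs the exit to occur within polynomially many moves w.h.p.\ so that a union bound preserves the single-step event), and then prove by a separate argument that from the right edge of the window, i.e.\ with a lead of order $n$, the $+$ opinion is absorbed at $2n+\Delta$ with probability $1-o(1)$. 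This is exactly the paper's Proposition \ref{prop:model2winseventually}: when $\Delta>\delta n$, the probability that any $+\to-$ flip ever occurs before the next $-\to+$ flip is $\leq L\exp(-n/L)$, by comparing two exponentially small binomial tails with distinct rate constants, so the walk increases monotonically to absorption w.h.p. This is an independent ingredient, not a consequence of the Gaussian profile of $\psi$; once you add it, your argument goes through and is essentially the paper's.
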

\begin{remark}\label{rmk:time}
Let us note that for the Markovian model, the random walk evolves very slowly and the time till consensus will be exponentially increasing in $n$ for small $\Delta$ (e.g., for $\Delta$ with $(p-q)n/q-\Delta=\Omega(n)$, as can be seen from the proof of Theorem \ref{thmmodel1}). 
\end{remark}
Compared to the Markovian model, the non-Markovian model exhibits  a completely different behavior. Its update rule has a similar form to majority dynamics on \ER graphs studied in \cite{tran2020reaching,berkowitz2022central, sah2021majority}: vertices update synchronously by comparing the numbers of neighbors holding the two opinions, and the analysis starts from tail estimates for these competing neighborhood counts. The picture is still quite different for the following reasons.
\begin{enumerate}[(i)]
    \item The block structure with $p>q$ makes the connection densities distinct between and within each block, and thus we are comparing binomial distributions with different means (see Remark \ref{remark:heuristics}), and the order of such differences needs to be controlled.
    \item The dynamics halts for a large range of $\Delta$, due to the constraint $p>q$. For example, if at one round nobody changes opinion, the dynamics halts.
    \item For $\Delta\in\N$, with high probability there will not be any opinion change from $+$ to $-$ due to the significant difference of the binomial means (see Proposition \ref{prop:no+to-} below), which is not the case when $p=q$.
\end{enumerate}
Our main result can be stated as follows.
 
\begin{theorem}
Let $\Delta=\Delta_n\in\N$. Consider the non-Markovian model on $ \SBM(n+\Delta,n,p,q) $ where $0<q<p\leq 1$, and the constant \begin{align}H=H(p,q)=\frac{\sqrt{p(2-p-q)}}{q}.\label{H}\end{align}
\begin{enumerate}[(i)]
    \item For any $\ee>0$, if 
    \begin{align}\Delta_n\leq \left(\frac{p-q}{q}\right) n-H\sqrt{n\log n}-\ee \sqrt{\frac{n(\log\log n)^2}{\log n}},\label{eq:i}\end{align}
    then the dynamics halts asymptotically almost surely.\label{2i}

     \item For any sufficiently large constant $ L(p,q)>0 $, if  
     $$\Delta_n\geq \left(\frac{p-q}{q}\right)n+L\sqrt{n\log n},$$
     then $\P(\sP_1)\to 1$, i.e., the opinion $+$ wins on the first day asymptotically almost surely.\label{2ii}
     
    \item For any fixed constant $\delta>0$, if  $$\Delta_n\geq \left(\frac{p-q}{q}\right)n-(H-\delta)\sqrt{n\log n},$$
    then $\P(\sP_2)\to 1$, i.e., the opinion $+$ wins on the second day asymptotically almost surely.\label{2iii}
    
   \item For any $\ee>0$, if 
    \begin{align}\Delta_n\geq \left(\frac{p-q}{q}\right) n-H\sqrt{n\log n}+\ee \sqrt{\frac{n(\log\log n)^2}{\log n}},\label{eq:v}\end{align}
   then $\P(\sP_3)\to 1$, i.e., the opinion $+$ wins on the third day asymptotically almost surely.\label{2iv}
   \item Asymptotically almost surely, the opinion $ - $ will not win, i.e., $\P(\sM)=o(1)$.\label{2v}
\end{enumerate}
\label{thmmodel2}
\end{theorem}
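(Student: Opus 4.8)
The plan is to reduce everything to the one–directional cascade of $-\to+$ flips and to control, day by day, the signed neighbor sum $S_i^{(t)} := \sum_{j:(i,j)\in E_{t-1}} \WW{j}{t-1}$, which governs whether a still-$-$ agent flips ($\WW{i}{t}=+1$ iff $S_i^{(t)}>0$). First I would dispose of the $+\to-$ direction: by Proposition~\ref{prop:no+to-}, a.a.s.\ no agent ever flips from $+$ to $-$, so $|\Vp{t}|$ is non-decreasing. Since it starts at $n+\Delta\ge n+1$ and, in the non-Markovian model, a round in which nobody flips freezes the graph and is therefore a genuine fixed point, this already yields (v): $-$ can never reach $2n+\Delta$, so $\P[\sM]=o(1)$; and it reduces $\sT$ to the event that $|\Vp{t}|$ stabilizes strictly below $2n+\Delta$.

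The engine for the remaining parts is the non-Markovian bookkeeping of $S_i^{(t)}$. Conditioning on the history $\calF_t$ up to day $t$, a still-$-$ agent $i$ keeps its edges to unchanged vertices frozen while its edges to the $k_t$ vertices that flipped on day $t$ are freshly resampled at probability $q$. Writing $b_i^{\mathrm{flip},t}$ for the number of $i$'s frozen $-$-neighbors among the day-$t$ flippers, a direct accounting gives the clean recursion $S_i^{(t+1)} = \big(S_i^{(t)}+b_i^{\mathrm{flip},t}\big) + \Bin(k_t,q)$, where the first parenthesis is $\calF_t$-measurable and the $\Bin(k_t,q)$ increment is independent given $\calF_t$; the mean increment per flipper is $p+q$. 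For (ii) I would bound $\min_{i\in\Vm{0}}S_i^{(1)}$, where $S_i^{(1)}=\Bin(n+\Delta,q)-\Bin(n-1,p)$ has mean $\ge Lq\sqrt{n\log n}$ and variance $\approx np(2-p-q)=(Hq)^2 n$. A Chernoff bound for a difference of binomials gives $\P[S_i^{(1)}\le 0]\le n^{-L^2/(2H^2)+o(1)}$, and a union bound over the $n$ agents closes once $L>H\sqrt2$, so all of them flip on day $1$.

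For (iii) and (iv) I would run the cascade for two, resp.\ three, days via moderate-deviation asymptotics. Writing $\Delta=\frac{p-q}{q}n-\theta H\sqrt{n\log n}$, the normalized signal $S_i^{(1)}/\sigma$ is centered at $-\theta\sqrt{\log n}$, so the day-$1$ flip count concentrates (second moment, since any two signals share only a single edge) around $k_1\approx n\,\overline\Phi(\theta\sqrt{\log n})\approx n^{1-\theta^2/2}/\sqrt{\log n}$. For (iii), $\theta=1-\delta/H<1$ forces $k_1\ge n^{1/2+c\delta}$, so the boost $(p+q)k_1$ is polynomially larger than the worst survivor gap $O(\sqrt{n\log n})$; conditioning on $\calF_1$ and applying the fresh $\Bin(k_1,q)$ increment with a union bound over the survivors shows every remaining agent flips on day $2$. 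For (iv), $\theta\to1^-$ makes $k_1\approx\sqrt{n/\log n}$ only, so one boost is insufficient and I would track a second flip count $k_2$; the sharp coefficient $H$ and the $\log\log n$ corrections $\tfrac32\sqrt{n(\log\log n)^2/\log n}$ then emerge from the second-order expansion of $\overline\Phi$ and from requiring the accumulated boost $(p+q)(k_1+k_2)/\sigma$ to cross the residual $\sqrt{\log n}$-gap by exactly day $3$.

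For the halting statement (i) the goal is the opposite: to show the cascade stalls with the most stubborn $-$-agents surviving forever. I would set up a self-consistent bound: if the total boost ever received is at most $B$, then the total number of agents that ever flip is at most $K^\ast\approx n\,\overline\Phi(\theta\sqrt{\log n}-B/\sigma)$, which must obey the consistency inequality $(p+q)K^\ast\lesssim B$; for $\theta$ bounded sufficiently above $1$ this has a solution with $B=O(\mathrm{poly})$, whence the accumulated boost never reaches the gap $\approx\sigma\sqrt{\log n}=\Theta(\sqrt{n\log n})$ of the agents near $S_i^{(1)}\approx\mu-\sigma\sqrt{2\log n}$, so they remain $-$ and the process halts. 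Making this rigorous uniformly over all future days — controlling the cascade's fluctuations by martingale/second-moment estimates — is the main obstacle, and it is exactly this fluctuation control that degrades the achievable constant from the heuristic (self-consistent) threshold $\theta=1$ to $\theta=\sqrt{3/2}$, with the refinement $\sqrt{25/24}$ and the $\log\log n$ term arising from optimizing the union bound across the poly-logarithmic layers of the cascade front. Throughout, the recurring nuisance is that $k_t$ and $b_i^{\mathrm{flip},t}$ are correlated with the survivors' neighborhoods; I would handle this by always conditioning on $\calF_t$ and using only the freshly resampled $\Bin(k_t,q)$ increments as the source of independent randomness.
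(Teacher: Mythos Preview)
Your treatment of parts (\ref{2ii})--(\ref{2v}) is essentially correct and parallels the paper: one reduces to the one-directional cascade via Proposition~\ref{prop:no+to-}, then tracks the number of $-\to+$ flips day by day using only the freshly resampled $\Bin(k_t,q)$ edges as independent randomness. The paper packages the finishing step as Proposition~\ref{prop2lastdaywins} (any batch of $\geq \delta n^{1/2+\delta}$ flippers wins the next day), and handles concentration of the day-1 flip count not by a direct second-moment bound but through the McKay--Wormald graph enumeration (Lemma~\ref{lem:True_to_Conditioned2}), which replaces the true SBM degree law by i.i.d.\ binomials up to negligible error. You should be aware that this reduction, rather than a bare second-moment argument, is the paper's main technical device for decorrelating the signals $S_i^{(1)}$.

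For part (\ref{2i}) your plan diverges from the paper, and your explanation of where the constants $\sqrt{3/2}$ and $\sqrt{25/24}$ come from is incorrect. The paper does \emph{not} control the cascade uniformly over all future days via a self-consistent fixed-point inequality. It exploits the much simpler observation that a single day with zero flips halts the dynamics forever, and merely shows that day~2 has no flips a.a.s. Concretely, writing $\Delta'=H\theta\sqrt{n\log n}$, the day-1 flip count concentrates at $k_1\approx n^{1-\theta^2/2}$; for each survivor the probability of lying in the window $0\le S_i^{(1)}\le -2k_1$ (hence flipping on day~2) is $\approx (k_1/\sqrt n)\,n^{-\theta^2/2}$ by Lemma~\ref{lemma:tail2}, so the expected number of day-2 flips is $\approx n^{3/2-\theta^2}$, which is $o(1)$ exactly when $\theta>\sqrt{3/2}$. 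The subleading coefficient $\sqrt{25/24}$ then comes from absorbing the polylogarithmic prefactors in Lemmas~\ref{lemmaBinomialtail2} and~\ref{lemma:tail2} (a net $(\log n)^{5/4}$) into the exponent, not from ``optimizing the union bound across polylogarithmic layers of the cascade front.'' Your more ambitious all-days scheme is precisely what would be needed to push $\theta$ down to $1$---i.e., to prove Conjecture~\ref{conjecture:1}---but the paper explicitly stops short of this. So the ``main obstacle'' you flag is not present in the paper's proof at all, and your sketch misses the simple two-day argument that actually produces the stated constants.
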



Theorem \ref{thmmodel2} identifies the phase-transition threshold up to the
second-order leading term. The leading term $(p-q)n/q$ is the heuristic
threshold suggested by Remark \ref{remark:heuristics}, while the second-order
correction is $-H\sqrt{n\log n}$. More precisely, for every fixed
$\varepsilon>0$, if
$$
\Delta_n\leq \left(\frac{p-q}{q}\right) n-H\sqrt{n\log n}
-\varepsilon\frac{\sqrt n\log\log n}{\sqrt{\log n}},
$$
then the dynamics halts a.a.s.; if
$$
\Delta_n\geq \left(\frac{p-q}{q}\right) n-H\sqrt{n\log n}
+\varepsilon\frac{\sqrt n\log\log n}{\sqrt{\log n}},
$$
then the opinion $+$ wins by day three a.a.s. Thus, the transition between
halting and fast consensus is localized to a window of order
$\sqrt n\log\log n/\sqrt{\log n}$ around
$\frac{(p-q)n}{q}-H\sqrt{n\log n}$. The case $p=1$ is included in the same statement,
where $H=\sqrt{1-q}/q$.

The difficulty here is to analyze the behavior of the dynamics for multiple days (since the first few days do not give sufficient information on whether the dynamics will halt), especially when the underlying graph evolves with time. This contrasts with several static-graph majority dynamics models where one- or two-day estimates already determine the relevant coarse event with high probability \cite{benjamini2016convergence,fountoulakis2020resolution,zehmakan2020opinion,tran2020reaching,sah2021majority,berkowitz2022central}. In our non-Markovian model, the first day can be handled using the marginal SBM structure, but after some vertices change opinion the retained edges create memory, so the second and third days require separate conditional analyses. We also remark that the rates at which the probabilities converge to 1 in Theorem \ref{thmmodel2} can be analyzed explicitly in the proofs.



\begin{remark}
Among the literature on majority dynamics, the assignment of initial opinions can be either deterministic or random. Models involving random initial data have been studied in \cite{benjamini2016convergence,fountoulakis2020resolution}, among many others. More precisely, the set $V^{(0)}_+$ (and hence $V^{(0)}_-$) is now determined by a sequence of i.i.d.~random variables independent of everything else, where each vertex has probability $r\in(0,1)$ of holding opinion $+$ and $1-r$ of holding opinion $-$. In the framework of the Markovian model, if $r>1/2$ then $\sP$ holds asymptotically almost surely (in short, a.a.s.); if $r<1/2$ then $\sM$ holds a.a.s.; if $r=1/2$ then both $\sP$ and $\sM$ hold with probability tending to $1/2$. For the non-Markovian model, as a consequence of Theorem \ref{thmmodel2} and the central limit theorem, if $r\geq p/(p+q)$ then $\sP_2$ holds a.a.s.; if $r\leq q/(p+q)$ then $ \sM_2 $ happens a.a.s.; otherwise, $\sT$ holds a.a.s.~This completely characterizes the behavior of our models under random initial conditions.
\end{remark}

\subsection{Notation and  organization of the paper}
For $ n \in \N $,
let $ [n] $ denote the set of integers $ \{1,\dots,n\} $. We use small boldface letters to denote sequences. For a vector or a sequence $ \mathbf{a}=(a_1,\dots,a_n) $, we use $ |\mathbf{a}| = \sum_{i=1}^n |a_i| $ to denote the $ \ell_1 $ norm. The $j$-th component of $\mathbf{a}$ is also denoted by $\mathbf{a}(j)$.

For $ m,n \in \N $ and $ p \in [0,1] $, let $ \sfG(n,p) $ denote the \ER graph and
let $ \sfG(m,n,p) $ denote a random bipartite graph with $ m $ vertices on one side and $ n $ vertices on the other, each edge included independently with probability $ p $.

For $ \mu,\sigma \in \R $, let $ \calN(\mu,\sigma^2) $ be the normal distribution with mean $ \mu $ and variance $ \sigma^2 $. For $ n \in \N $ and $ p \in [0,1] $, we use $ \Bin(n,p) $ to denote the binomial distribution with parameters $ n $ and $ p $. When a binomial distribution appears inside a probability operator $ \P $, it should be interpreted as a binomial random variable with such a  distribution independent of everything else, unless, when a certain distribution appears multiple times,  the random variables are interpreted as being equal instead of being independent.

\smallskip

The remainder of the paper is organized as follows. In Section \ref{sec:Outline}, we briefly describe the proof strategies which will be different for the two distinct models. In Section \ref{sec:Pre}, we collect some auxiliary results, which include a summary of graph enumeration results of random graphs in Section \ref{sec:Enumeration} and nearly-optimal binomial tail bounds in Section \ref{sec:binomtails}. The complete proofs of the theorems are given in Section \ref{sec:proofs}. Finally, we provide some numerical simulations in Section \ref{sec:Num} and discuss some open questions in Section \ref{sec:Remark}.

\section{Proof Strategies}\label{sec:Outline}
\subsection{Markovian model}
In the Markovian model, thanks to the resampling of the whole graph, at each step the evolution of the model can be treated as the first round with initialization given by the updated opinions from the previous step. This implies that the number of vertices holding opinion $+$, $ \{|V^{(t)}_+|\}_{t\in\N_0}$, forms a Markovian random walk on $\{0,1,\dots,2n+\Delta\}$.  This reduces Theorem \ref{thmmodel1} to the analysis of a certain random walk on $\Z$. Such a random walk is symmetric around $n+\Delta/2$. Intuitively speaking, the walk is attracted by its two endpoints $0$ and $2n+\Delta$. Thus to determine $\P(\sP)$ we need to understand the behavior of the walk near the center $n+\Delta/2$. A crucial estimate, given by Lemma \ref{lemma:probratio}, states that near the center of the chain, the random walk rarely performs a move  of length greater than one, and that the ratio of probabilities of moving right by one to that of moving left by one is controlled from below by some constant $1+1/L$, uniformly in $n$ and $\Delta$. Roughly speaking, this stems from the fact that the binomial tails are exponentially decreasing away from the mean. Theorem \ref{thmmodel1} then follows from Gambler's Ruin estimates, together with the observation that being close to an endpoint ensures the chain to reach the endpoint with high probability, which is given by Proposition \ref{prop:model2winseventually}.

\subsection{Non-Markovian model}
For the non-Markovian model, due to the dependency on the memory of previous steps, we need to track the evolution of graphs more carefully. The first step is to understand  the joint distribution of the degrees of the vertices on the initial day. The marginal distribution of the degree of a fixed vertex is binomial, whereas the degrees are not independent for different neighbors. We bypass this difficulty by replacing  the degrees with a simpler probabilistic model, in which the degrees of the vertices form a sequence of  independent binomial random variables. This idea is based on graph enumeration results of random graphs by McKay and Wormald \cite{mckay1997degree}, which roughly states that the distributions of the degrees in a random graph are approximately conditionally independent. 
We address the reduction to the independent degree model in Section \ref{sec:Enumeration}.

We want to emphasize that although the previous work \cite{sah2021majority} also uses graph enumeration techniques, our method is very different. In our work, graph enumerations are only used in the analysis for the initial day. Moreover, graph enumerations cannot be applied for the follow-up days in the process. This is because the dependency of past memories makes the edge connections highly correlated. The edge connections are hard to track along the process, and thus it is impossible to apply similar analysis for steps after day $ 1 $ as in the static \ER model.

This also clarifies the relation with the coarse-event union-bound approach in \cite{tran2020reaching}. In the static \ER setting, once the graph is sampled, one can control the relevant first- or second-day events by applying binomial tail estimates and union bounds over the vertices. We use the same broad philosophy for the first day, but after day $1$ the non-Markovian rewiring rule introduces conditional dependencies through the retained edges. The second- and third-day arguments therefore have to condition on the realized opinion changes and track only the new randomness generated by vertices that changed opinion.

The proof of the halting side of Theorem \ref{thmmodel2} is based on the fact
that, if no vertex changes opinion in one round, then the non-Markovian
dynamics becomes stationary. If
$$
\Delta_n\leq \left(\frac{p-q}{q}\right) n-H\sqrt{n\log n}
-\varepsilon\frac{\sqrt n\log\log n}{\sqrt{\log n}},
$$
the number of initially negative vertices that change to $+$ on the first
day is small. The main task is then to show that these few changed vertices do
not create a large enough reservoir of new $+$-neighbors to restart the
cascade. This is done by combining local moderate-deviation estimates for the
initial binomial comparison with Proposition \ref{prop:no+to-}  below, which states that there are no vertices changing opinion from $ + $ to $ - $ a.a.s. 

On the other hand, if
$$
\Delta_n\geq \left(\frac{p-q}{q}\right) n-H\sqrt{n\log n}
+\varepsilon\frac{\sqrt n\log\log n}{\sqrt{\log n}},
$$
there are enough first-day changes from $-$ to $+$ to form a reservoir.
The newly resampled edges from this reservoir give many remaining negative
vertices additional $+$-neighbors. This pushes the process into the easier
supercritical regime, where Proposition \ref{prop2lastdaywins} implies that
the opinion $+$ wins by day three. The one- and two-day sufficient conditions
in Theorem \ref{thmmodel2} follow from first-step estimates of the same
binomial comparisons, using graph enumeration techniques mentioned above.

\section{Preliminaries}\label{sec:Pre}

\subsection{Reduction to the independent model}\label{sec:Enumeration}

The swapping of opinions in the majority dynamics is based on the degrees of the vertices in the stochastic block model, but the distribution of the true degrees  of an SBM is hard to analyze due to the constraint from the graph structure. To overcome this issue, as mentioned in the outline of proofs, we rely on the graph enumeration technique developed by McKay and Wormald \cite{mckay1990asymptotic,mckay1997degree}. At a high level, their work implies that the degrees of random graphs look conditionally independent. 
These techniques apply to the non-Markovian model in the proof of Theorem \ref{thmmodel2}.


To make our paper self-contained, in this section we review some probabilistic models for the degree sequences, which will be used to give a quantitative version of the aforementioned high-level idea of graph enumerations. To begin with, let us define the domains where the degree sequences are defined, following \cite{mckay1997degree}.

\begin{definition}[Degree sequence domains]
Denote $ I_n=\{0,\dots,n-1\}^n $. Let $ E_n $ be the even sum sequences in $ I_n $. The elements of these sets are typically denoted by the small bold letter $ \bd $. For the bipartite setting, similarly denote $ I_{m,n}=\{0,\dots,n\}^m \times \{0,\dots,m\}^n $. Let $ E_{m,n} $ be the sequences with equal sums on both sides. The elements of these sets are typically denoted by small boldface letters $ \bs $ of length $ m $ and $ \bt $ of length $ n $. The corresponding random variables will be denoted by capital boldface letters.
\end{definition}

The distribution of the true degree sequence in $ \sfG(n,p) $ and $ \sfG(m,n,p) $ is denoted as follows.

\begin{definition}[True degree models]
Let $ \calD_p^n $ be the degree sequence distribution of the \ER graph $ \sfG(n,p) $, which is a random variable supported on $ E_{n} \subset I_{n} $. For the bipartite setting, let $ \calD_{p}^{m,n} $ be the degree sequence distribution of the random bipartite graph $ \sfG(m,n,p) $, which is a random variable supported on $ E_{m,n} \subset I_{m,n} $.
\end{definition}

For approximations of the true degree model, we introduce the following models.

\begin{definition}[Independent degree models]
Let $ \calB_p^n $ be the distribution of $ n $ independent copies of $ \Bin(n-1,p) $ random variables, which is supported on $ I_n $. For the bipartite setting, let $ \calB_p^{m,n} $ be the distribution of $ m $ independent $ \Bin(n,p) $ variables and $ n $ independent $ \Bin(m,p) $ variables, which is supported on $ I_{m,n} $. 
\end{definition}

\begin{definition}[Conditioned degree models]
Let $ \calE_p^n $ be the distribution of $ \calB_p^n $ conditioned on having even sum, which is supported on $ E_n $. 
\end{definition}

\begin{definition}[Integrated degree models]
Let $ \calI_p^n $ be the distribution defined as follows: First sample $ p' \sim \calN(p,\tfrac{p(1-p)}{n(n-1)}) $, conditional on $ p' \in (0,1) $; then sample from $ \calE_{p'}^{n} $. 
\end{definition}

Using the models introduced above, we now state the following necessary preliminary result.

\begin{theorem}[{\cite[Theorem 3]{mckay1990asymptotic} and \cite[Theorem 3.6]{mckay1997degree}}]\label{thm:Enumeration_ER}
There exists $ c>0 $ so that the following is true. Let $ n \geq 2 $ and suppose that $ (\log n)^{-1/4} \leq p \leq 1-(\log n)^{-1/4} $. There is an event $ B_p^n \subset I_n $ such that $ \P_{\calD_p^n}(B_p^n) = n^{-\omega(1)} $
and uniformly for all $ \bd \in I_n \backslash B_p^n $ we have
$$ \P_{\calD_p^n} \pth{\bD=\bd} = \pth{1+O(n^{-c})} \P_{\calI_p^n} \pth{\bD=\bd}. $$
\end{theorem}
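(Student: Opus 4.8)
The statement is quoted verbatim from the work of McKay and Wormald, so my plan is to indicate how it follows from their asymptotic enumeration of dense degree sequences rather than to reprove that enumeration from scratch. Write $m=\tfrac12\sum_i d_i$ for the number of edges forced by $\bd$, and let $N(\bd)$ denote the number of labeled simple graphs on $[n]$ realizing $\bd$. Since every such graph has the same probability $p^{m}(1-p)^{\binom{n}{2}-m}$ under $\sfG(n,p)$, we have the exact identity
$$\P_{\calD_p^n}[\bD=\bd]=N(\bd)\,p^{m}(1-p)^{\binom{n}{2}-m},$$
whereas the independent model satisfies $\P_{\calB_p^n}[\bD=\bd]=\prod_{i=1}^n\binom{n-1}{d_i}p^{d_i}(1-p)^{n-1-d_i}$. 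The whole problem is therefore to control the ratio $N(\bd)/\prod_i\binom{n-1}{d_i}$ after restoring the powers of $p$ and $1-p$, and to show that the resulting correction is reproduced by the Gaussian smoothing that defines $\calI_p^n$.

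First I would fix the bad event. Let $\delta_i=d_i-(n-1)p$ and declare $B_p^n$ to be the set of sequences for which some $\abs{\delta_i}$ exceeds $C\sqrt{p(1-p)n\log n}$, or for which the global excess $\abs{\sum_i\delta_i}$ is atypically large. Under $\calD_p^n$ each degree is marginally $\Bin(n-1,p)$, so the nearly-optimal binomial tail bounds of Section \ref{sec:binomtails}, together with a union bound over the $n$ coordinates, give $\P_{\calD_p^n}[B_p^n]=n^{-\omega(1)}$ once $C$ is large; the hypothesis $(\log n)^{-1/4}\le p\le 1-(\log n)^{-1/4}$ guarantees the variance proxy $p(1-p)$ is not too small for these bounds to bite. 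On the complement, all degrees lie in the window where the McKay--Wormald high-degree enumeration formula applies.

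The analytic heart is that formula: for $\bd\in I_n\setminus B_p^n$ it yields $N(\bd)=(1+O(n^{-c}))\binom{\binom{n}{2}}{m}\exp(Q(\bd))$, where $Q$ is an explicit quadratic functional of the centered degrees arising from the saddle-point analysis of the generating function $\prod_{i<j}(1+x_ix_j)$. Substituting this into the two displays above and expanding $\binom{n-1}{d_i}p^{d_i}(1-p)^{n-1-d_i}$ by a local central limit / Stirling estimate, the ratio $\P_{\calD_p^n}/\P_{\calB_p^n}$ collapses, up to $(1+O(n^{-c}))$, to the exponential of a correction depending on $\bd$ only through the total edge fluctuation $\sum_i\delta_i=\sum_i d_i-n(n-1)p$. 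The effect of this correction is precisely to double the variance of $\sum_i d_i$: the independent model assigns $\sum_i d_i$ variance $n(n-1)p(1-p)$, whereas in $\sfG(n,p)$ the edge count $m$ is $\Bin(\binom{n}{2},p)$, so $\sum_i d_i$ has variance $2n(n-1)p(1-p)$.

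The final step is to recognize this discrepancy as exactly the Gaussian smoothing in the definition of $\calI_p^n$. Perturbing $p\mapsto p'\sim\calN(p,\tfrac{p(1-p)}{n(n-1)})$ and then sampling $\calE_{p'}^n$ adds, by the law of total variance, an independent contribution $(n(n-1))^2\cdot\tfrac{p(1-p)}{n(n-1)}=n(n-1)p(1-p)$ to the variance of $\sum_i d_i$ on top of the conditional variance $n(n-1)p'(1-p')\approx n(n-1)p(1-p)$, reproducing the true total $2n(n-1)p(1-p)$; carrying this out is a one-dimensional Gaussian (Hubbard--Stratonovich) integral whose exponent matches the quadratic correction term by term, while conditioning on even sum matches the parity constraint $2\mid\sum_i d_i$ built into $\calD_p^n$. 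Chaining the three $(1+O(n^{-c}))$ factors — enumeration, local expansion of the binomials, and the Gaussian match — yields the theorem. I expect the enumeration formula for $N(\bd)$ to be the genuine obstacle: it is the deep saddle-point computation of McKay and Wormald \cite{mckay1990asymptotic,mckay1997degree}, which is why we invoke rather than reprove it. Granting it, the only remaining risk is matching constants — verifying that the variance $p(1-p)/(n(n-1))$ in the definition of $\calI_p^n$ is exactly the one that converts the Gaussian integral into the correction $Q$, and that conditioning on even sum costs only $1+O(n^{-c})$ — and both are elementary once $Q$ is written out explicitly.
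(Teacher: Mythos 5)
This statement is not proved in the paper at all: it is imported verbatim from McKay--Wormald (\cite[Theorem 3]{mckay1990asymptotic}, \cite[Theorem 3.6]{mckay1997degree}), and your proposal likewise rests on exactly those results, invoking the dense enumeration formula and sketching the reduction (bad event from binomial degree concentration, the variance-doubling of $\sum_i d_i$ from $n(n-1)p(1-p)$ to $2n(n-1)p(1-p)$ matched by the Gaussian smoothing in $\calI_p^n$, and the parity constraint matched by the even-sum conditioning), which is faithful to how the cited theorem is actually derived. So your treatment is consistent with the paper's: both defer the deep saddle-point enumeration to McKay--Wormald, and your surrounding heuristics are correct.
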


In the remainder of this paper, when dealing with probabilities concerning random graphs, the notation $ \P $ (without mentioning the probability model explicitly) represents the probability measure of the true degree models. For example, the conclusion of Theorem \ref{thm:Enumeration_ER} can be written as $$ \P \pth{\bD=\bd} = \pth{1+O(n^{-c})} \P_{\calI_p^n} \pth{\bD=\bd}. $$

Note that the degree sequence in $ \SBM(m,n,p,q) $ is sampled from three independent subgraphs: $ \sfG(m,p) $, $ \sfG(n,p) $ and $ \sfG(m,n,q) $. Using Theorem \ref{thm:Enumeration_ER}, when considering the marginal probabilities on a single block, we can replace the true degrees of the \ER subgraph with the integrated degree model, and henceforth reduce it to the independent degree model. Specifically, we have the following result. In the displays below, the event involving $|\Vp{0}\cap\Vp{1}|$ uses only the one-side marginal degree sequence of the initial $+$ block together with the cross-degrees, and the event involving $|\Vm{0}\cap\Vm{1}|$ uses only the corresponding one-side marginal of the initial $-$ block together with the cross-degrees. We are not replacing the full joint SBM degree sequence by independent marginals.

\begin{lemma}\label{lem:True_to_Conditioned2}
Assume $0<q<p<1$. For any sufficiently large fixed $ L>0$, we have
\begin{multline*}
 \P_{\calD_{p}^{n+\Delta},\calB_{q}^{n+\Delta,n}}\pth{|\Vp{0} \cap \Vp{1}|=x}\\
 = \pth{1+O(n^{-c})} \int_{R} \P_{ \calB_{r_1}^{n+\Delta}, \calB_{q}^{n+\Delta,n} } \pth{ |\Vp{0} \cap \Vp{1}|=x } \dd \mu_{1}(r_1) + O(n^{-\omega(1)}),
 \end{multline*}
and
\begin{multline*}
 \P_{\calD_{p}^{n},\calB_{q}^{n+\Delta,n}}\pth{|\Vm{0} \cap \Vm{1}|=y}\\
 = \pth{1+O(n^{-c})} \int_{R} \P_{ \calB_{r_2}^{n}, \calB_{q}^{n+\Delta,n} } \pth{ |\Vm{0} \cap \Vm{1}|=y } \dd \mu_{2}(r_2) + O(n^{-\omega(1)}).
 \end{multline*} 
where the measure $ \mu_1 $ is the distribution $ \calN \pth{p,\tfrac{p(1-p)}{(n+\Delta)(n+\Delta-1)}} $, $ \mu_2 $ is the distribution $ \calN \pth{p,\tfrac{p(1-p)}{n(n-1)}} $, and the integral domain is given by
\begin{equation}
R := \left\{ r \in [0,1] :~ |r-p| \leq  \frac{L\log n}{n} \right\}.
\label{eq:R}
\end{equation}
\end{lemma}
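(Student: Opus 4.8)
The plan is to prove the first identity (for the $+$ block); the second is verbatim with $n+\Delta$ replaced by $n$ and $\mu_1$ by $\mu_2$. Write $A=\{|\Vp{0}\cap\Vp{1}|=x\}$. The key structural observation is that under $\SBM(n+\Delta,n,p,q)$ the event $A$ depends on only two \emph{independent} ingredients: the within-$+$-block degrees of the $+$-vertices, whose law is the true ER degree model $\calD_p^{n+\Delta}$, and the cross-degrees of the $+$-vertices toward the $-$-block. Since one side of a bipartite graph has genuinely independent degrees, the latter are exactly i.i.d.\ $\Bin(n,q)$, i.e.\ the $(n+\Delta)$-coordinate marginal of $\calB_q^{n+\Delta,n}$, and require no reduction. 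Thus it suffices to reduce the single factor $\calD_p^{n+\Delta}$ to the Gaussian mixture of independent models on the right-hand side. I would do this in three moves: (1) replace $\calD_p^{n+\Delta}$ by the integrated model $\calI_p^{n+\Delta}$ via Theorem \ref{thm:Enumeration_ER}; (2) truncate the Gaussian mixing measure from $(0,1)$ to $R$; (3) remove the even-sum conditioning, replacing each $\calE_{r_1}^{n+\Delta}$ by $\calB_{r_1}^{n+\Delta}$.

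For move (1), condition on the within-block degree sequence $\bd$ and write $\P_{\calD_p^{n+\Delta},\calB_q^{n+\Delta,n}}[A]=\sum_{\bd}\P_{\calD_p^{n+\Delta}}[\bD=\bd]\,f(\bd)$, where $f(\bd):=\P_{\calB_q^{n+\Delta,n}}[A\mid\bD=\bd]\in[0,1]$ is the cross-part probability (it enters only through the thresholds $d_i$ against which the cross-degrees are compared). Split the sum at the bad set $B_p^{n+\Delta}$ of Theorem \ref{thm:Enumeration_ER}: its contribution is at most $\P_{\calD_p^{n+\Delta}}[B_p^{n+\Delta}]=n^{-\omega(1)}$, while on the complement the theorem gives $\P_{\calD_p^{n+\Delta}}[\bD=\bd]=(1+O(n^{-c}))\P_{\calI_p^{n+\Delta}}[\bD=\bd]$. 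Re-adding the bad set under $\calI_p^{n+\Delta}$ costs at most $\P_{\calI_p^{n+\Delta}}[B_p^{n+\Delta}]$, which is again $n^{-\omega(1)}$: indeed $\calI_p^{n+\Delta}$ is a Gaussian mixture of even-conditioned independent binomials, so the tail bounds of Section \ref{sec:binomtails} bound its mass on the large-deviation set $B_p^{n+\Delta}$ by $n^{-\omega(1)}$. This yields $\P_{\calD_p^{n+\Delta},\calB_q^{n+\Delta,n}}[A]=(1+O(n^{-c}))\P_{\calI_p^{n+\Delta},\calB_q^{n+\Delta,n}}[A]+O(n^{-\omega(1)})$.

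For moves (2) and (3), unfold $\calI_p^{n+\Delta}$ as the $\mu_1$-mixture of $\calE_{r_1}^{n+\Delta}$ restricted to $(0,1)$, with normalizer $Z=\mu_1((0,1))=1-n^{-\omega(1)}$. Since the standard deviation of $\mu_1$ is of order $1/n$ while $R$ has half-width $L\log n/n$, the set $R$ spans $\Omega(L\log n)$ standard deviations, so a Gaussian tail bound gives $\mu_1((0,1)\setminus R)=n^{-\omega(1)}$; truncating to $R$ and absorbing $Z^{-1}=1+n^{-\omega(1)}$ into the multiplicative error costs only $O(n^{-\omega(1)})$. It then remains to show, uniformly for $r_1\in R$, the pointwise de-conditioning $\P_{\calE_{r_1}^{n+\Delta},\calB_q^{n+\Delta,n}}[A]=\P_{\calB_{r_1}^{n+\Delta},\calB_q^{n+\Delta,n}}[A]+n^{-\omega(1)}$. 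Writing $S=\sum_i D_i$ for the within-block degree sum, we have $\P_{\calE_{r_1}}[A]=\P_{\calB_{r_1}}[A,\,S\text{ even}]/\P_{\calB_{r_1}}[S\text{ even}]$ with $\P_{\calB_{r_1}}[S\text{ even}]=\tfrac12(1+(1-2r_1)^{(n+\Delta)(n+\Delta-1)})=\tfrac12+n^{-\omega(1)}$ (on $R$ either $r_1$ is bounded away from $1/2$, or $|1-2r_1|=O(\log n/n)$ if $p=1/2$, and the power is $n^{-\omega(1)}$ in both cases). Hence everything reduces to the decorrelation estimate
\begin{equation}\label{eq:decorr}
\E_{\calB_{r_1}^{n+\Delta},\calB_q^{n+\Delta,n}}\qth{\indc{A}\,(-1)^{S}}=n^{-\omega(1)},\qquad r_1\in R.
\end{equation}

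I expect \eqref{eq:decorr} to be the main obstacle, and it is where the idea of \cite{sah2021majority} enters. Conditioning on the cross-degrees $s_1,\dots,s_{n+\Delta}$ and setting $Y_i=\indc{D_i\ge s_i}$, the pairs $(D_i,Y_i)$ are independent across $i$, so the Fourier identity $\indc{\sum_i Y_i=x}=\int_0^1 e^{2\pi\ii\theta(\sum_i Y_i-x)}\,\dd\theta$ factorizes the expectation as $\int_0^1 e^{-2\pi\ii\theta x}\prod_i g_i(\theta)\,\dd\theta$ with $g_i(\theta)=\E[e^{2\pi\ii\theta Y_i}(-1)^{D_i}]$. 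One computes $g_i(\theta)=a_i(1-e^{2\pi\ii\theta})+(1-2r_1)^{n+\Delta-1}e^{2\pi\ii\theta}$, where $a_i=\E[(-1)^{D_i}\indc{D_i<s_i}]$ is a truncated alternating binomial sum; because the binomial weights are unimodal, $|a_i|$ is at most (a constant multiple of) the maximal binomial mass, which is $O(n^{-1/2})$ uniformly in $s_i$. Hence $|g_i(\theta)|=O(n^{-1/2})$ uniformly in $\theta$ and in the threshold, and the product of at least $n$ such factors is $n^{-\Omega(n)}$, comfortably establishing \eqref{eq:decorr}. The delicate points are the uniformity of the per-coordinate bound over all $\theta$ and all threshold values $s_i$ (including atypical ones), and verifying that $r_1$ never approaches the degenerate value $1/2$ where $(-1)^{D_i}$ fails to decorrelate—both guaranteed by the constraint $|r_1-p|\le L\log n/n$ defining $R$. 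Integrating the pointwise de-conditioning against $\mu_1$ over $R$ and collecting the three error terms gives the stated identity; the $-$-block estimate follows by the identical argument with $\calD_p^{n}$, $\mu_2$, and $n+\Delta$ replaced by $n$.
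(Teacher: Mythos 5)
Your proposal is correct, and its skeleton --- (1) pass from $\calD_p^{n+\Delta}$ to the integrated model $\calI_p^{n+\Delta}$ via Theorem \ref{thm:Enumeration_ER}, (2) truncate the Gaussian mixing measure to $R$, (3) strip the even-sum conditioning --- is exactly the paper's proof. The one genuine difference is step (3): the paper disposes of the parity constraint by Bayes' rule followed by a citation of the arguments in \cite[Equation (2.4)]{sah2021majority}, i.e.\ it quotes the decorrelation $\P_{\calB_{r_1}^{n+\Delta},\calB_q^{n+\Delta,n}}\qth{A,\ |\bD_+|\in 2\Z} = \pth{\tfrac12+O(\e^{-n})}\P\qth{A}+O(\e^{-\Omega(n)})$, whereas you prove this decorrelation from scratch: conditioning on the cross-degrees so that the pairs $(D_i,Y_i)$ become independent, factorizing through the Fourier identity, and bounding each factor via $|g_i(\theta)|\le 2|a_i|+|1-2r_1|^{n+\Delta-1}=O(n^{-1/2})$, where the bound on $|a_i|$ comes from the alternating sum of a unimodal sequence being dominated by its maximal term. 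This argument is sound --- the product of $n+\Delta$ such factors is $n^{-\Omega(n)}$, which after integrating in $\theta$ and averaging over cross-degree configurations gives the decorrelation estimate with room to spare --- and what it buys is a self-contained lemma together with an explicit record of where the constraint $r_1\in R$ enters (keeping $r_1(1-r_1)$ bounded away from $0$ so the maximal binomial mass is $O(n^{-1/2})$, and keeping $|1-2r_1|^{n+\Delta-1}$ negligible even when $p=1/2$).

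One further point in your favor: in step (1) you observe that re-inserting the bad set under the integrated model requires $\P_{\calI_p^{n+\Delta}}\qth{B_p^{n+\Delta}}=n^{-\omega(1)}$, and this does \emph{not} follow formally from the statement of Theorem \ref{thm:Enumeration_ER} alone (which only yields $O(n^{-c})$ for that mass, hence only an $O(n^{-c})$ additive error). Your justification --- that $B_p^{n+\Delta}$ is a large-deviation set of degree sequences whose $\calI$-mass is controlled by binomial/Gaussian tail bounds --- is the correct one; it uses the explicit form of the exceptional set in McKay--Wormald, a point the paper's proof passes over silently.
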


\begin{proof}
For a given graph, the swapping of opinions purely depends on the degree information. This implies that the sizes of the swapped vertices, as random variables, are measurable with respect to the degree sequences. Note that the degree sequences are sampled from three independent random graphs: two \ER graphs $ \sfG(n+\Delta,p) $, $ \sfG(n,p) $, and a random bipartite graph $ \sfG(n+\Delta,n,q) $, i.e., the randomness of this event comes from the true degree models $ \calD_{p}^{n+\Delta} $, $ \calD_{p}^{n} $ and $ \calD_{q}^{n+\Delta,n} $ independently. Since we are interested in the marginal behavior of a single block, by Theorem \ref{thm:Enumeration_ER}, we can replace the true degree models $ \calD_{p}^{n+\Delta} $ and $ \calD_{p}^{n} $ with the integrated degree models $ \calI_{p}^{n+\Delta} $ and $ \calI_{p}^{n} $ up to a $ 1+O(n^{-c}) $ multiplicative factor and an additive error term of size $ O(n^{-\omega(1)}) $.

For the two \ER subgraphs, let $ \bd_1 \in I_{n+\Delta} $ be the degree sequence of length $ n+\Delta $ and $ \bd_2 \in I_n $ be the degree sequence of length $ n $. For the bipartite subgraph, we use $ (\bs,\bt) \in I_{n+\Delta,n} $ to denote the degree sequences of length $ n+\Delta $ and $ n $, respectively. We treat the swapped sets $ \Vp{1} $ and $ \Vm{1} $ as functions of $ (\bd_1,\bd_2,(\bs,\bt)) $, and extend these functions in the obvious way if the total sum in $ I_{n+\Delta} $ or $ I_{n} $ is not even, or the sums on both sides of $ I_{n+\Delta,n} $ do not match.

Using these notation, by Theorem \ref{thm:Enumeration_ER}, we have
\begin{multline}\label{eq:True_to_Integrated_1}
\P_{\calD_{p}^{n+\Delta},\calB_{q}^{n+\Delta,n}} \pth{|\Vp{0} \cap \Vp{1}|=x}\\
= \pth{1+O(n^{-c})} \P_{\calI_{p}^{n+\Delta},\calB_{q}^{n+\Delta,n}} \pth{ |\Vp{0} \cap \Vp{1}|=x } + O(n^{-\omega(1)})
\end{multline}
and
\begin{equation}\label{eq:True_to_Integrated_2}
\P_{\calD_{p}^{n},\calB_{q}^{n+\Delta,n}} \pth{|\Vm{0} \cap \Vm{1}|=y}
= \pth{1+O(n^{-c})} \P_{\calI_{p}^{n},\calB_{q}^{n+\Delta,n}} \pth{ |\Vm{0} \cap \Vm{1}|=y } + O(n^{-\omega(1)}).
\end{equation}
By the definition of integrated degree models, we further have
\begin{equation*}
\P_{\calI_{p}^{n+\Delta},\calB_{q}^{n+\Delta,n}} \pth{ |\Vp{0} \cap \Vp{1}|=x }
= \frac{1}{\int_{[0,1]} \dd \mu_1(r_1)} \int_{[0,1]} \P_{\calE_{r_1}^{n+\Delta},\calB_{q}^{n+\Delta,n}} \pth{ |\Vp{0} \cap \Vp{1}|=x } \dd \mu_1(r_1)
\end{equation*}
and
\begin{equation*}
\P_{\calI_{p}^{n},\calB_{q}^{n+\Delta,n}} \pth{ |\Vm{0} \cap \Vm{1}|=y }
= \frac{1}{\int_{[0,1]} \dd \mu_2(r_2)} \int_{[0,1]} \P_{\calE_{r_2}^{n},\calB_{q}^{n+\Delta,n}} \pth{ |\Vm{0} \cap \Vm{1}|=y } \dd \mu_2(r_2).
\end{equation*}
Consider the random variables
$$ Z_1 \sim \calN \pth{p, \frac{p(1-p)}{(n+\Delta)(n+\Delta-1)}},\ \  Z_2 \sim \calN \pth{p,\frac{p(1-p)}{n(n-1)}}. $$
The Gaussian tail bound gives that for $ i=1,2 $,
$$ \prob{|Z_i-p| \leq \frac{L \log n}{n}  } = 1-O(n^{-\omega(1)}). $$
This yields
\begin{equation*}
\P_{\calI_{p}^{n+\Delta},\calB_{q}^{n+\Delta,n}} \pth{ |\Vp{0} \cap \Vp{1}|=x }
= \int_{R} \P_{\calE_{r_1}^{n+\Delta},\calB_{q}^{n+\Delta,n}} \pth{ |\Vp{0} \cap \Vp{1}|=x } \dd \mu_1(r_1) + O(n^{-\omega(1)}),
\end{equation*}
and
\begin{equation*}
\P_{\calI_{p}^{n},\calB_{q}^{n+\Delta,n}} \pth{ |\Vm{0} \cap \Vm{1}|=y } = \int_{R} \P_{\calE_{r_2}^{n},\calB_{q}^{n+\Delta,n}} \pth{ |\Vm{0} \cap \Vm{1}|=y } \dd \mu_2(r_2) + O(n^{-\omega(1)}),
\end{equation*}where $R$ is given by \eqref{eq:R}.

Finally, we remove the evenness constraints in the conditioned models $ \calE_{r_1}^{n+\Delta} $ and $ \calE_{r_2}^{n} $. 
Note that the Bayes' rule implies
\begin{equation*}
\P_{\calE_{r_1}^{n+\Delta},\calB_{q}^{n+\Delta,n}} \pth{ |\Vp{0} \cap \Vp{1}|=x } = \frac{\P_{\calB_{r_1}^{n+\Delta},\calB_{q}^{n+\Delta,n}} \pth{ |\Vp{0} \cap \Vp{1}|=x, |\bD_+| \in 2 \Z }}{\P_{\calB_{r_1}^{n+\Delta}} \pth{|\bD_+| \in 2 \Z} },
\end{equation*}
and
\begin{equation*}
\P_{\calE_{r_2}^{n},\calB_{q}^{n+\Delta,n}} \pth{ |\Vm{0} \cap \Vm{1}|=y } = \frac{\P_{\calB_{r_2}^{n},\calB_{q}^{n+\Delta,n}} \pth{ |\Vm{0} \cap \Vm{1}|=y , |\bD_-| \in 2 \Z }}{ \P_{\calB_{r_2}^{n}} \pth{|\bD_-| \in 2 \Z} },
\end{equation*}
where $ \bD_+ $ and $ \bD_- $ are the degree sequences of $ \Vp{0} $ and $ \Vm{0} $, respectively. Using the arguments in \cite[Equation (2.4)]{sah2021majority}, we have
\begin{align*}
& \frac{\P_{\calB_{r_1}^{n+\Delta},\calB_{q}^{n+\Delta,n}} \pth{ |\Vp{0} \cap \Vp{1}|=x, |\bD_+| \in 2 \Z }}{\P_{\calB_{r_1}^{n+\Delta}} \pth{|\bD_+| \in 2 \Z} }\\
&= \frac{ (\frac{1}{2} + O(\exp(-n))) \P_{\calB_{r_1}^{n+\Delta},\calB_{q}^{n+\Delta,n}} \pth{ |\Vp{0} \cap \Vp{1}|=x} + O\pth{\exp(-\Omega(n))} }{\frac{1}{2} + O(\exp(-n))}\\
&= \P_{\calB_{r_1}^{n+\Delta},\calB_{q}^{n+\Delta,n}} \pth{ |\Vp{0} \cap \Vp{1}|=x} + O\pth{\exp(-\Omega(n))},
\end{align*}
and similarly
$$ \frac{\P_{\calB_{r_2}^{n},\calB_{q}^{n+\Delta,n}} \pth{ |\Vm{0} \cap \Vm{1}|=y , |\bD_-| \in 2 \Z }}{ \P_{\calB_{r_2}^{n}} \pth{|\bD_-| \in 2 \Z} } = \P_{\calB_{r_2}^{n},\calB_{q}^{n+\Delta,n}} \pth{ |\Vm{0} \cap \Vm{1}|=y} + O\pth{\exp(-\Omega(n))}. $$
Plugging back into  \eqref{eq:True_to_Integrated_1} and \eqref{eq:True_to_Integrated_2} completes the proof.
\end{proof}

\subsection{Tail estimates of binomial distributions}\label{sec:binomtails}
For the proofs of the main theorems, we will frequently use the marginal probabilities that the opinion of a certain vertex flips at the first step in majority dynamics on $ \SBM(n+\Delta,n,p,q) $, defined by
\begin{align}p_{-+}:=\prob{\Bin(n+\Delta,q)>\Bin(n-1,p) },\ p_{+-}:=\prob{\Bin(n,q)>\Bin(n+\Delta-1,p)}.\label{eq:prbpbr}\end{align}

In this section, we collect some technical results concerning the tail behavior of binomial random variables and give upper and lower bounds for quantities of the form  \eqref{eq:prbpbr}. Throughout, we consider $0<q<p\leq 1$, and $\tilde{p}\in[0,1]\cap[p-L(\log n)/n,p+L(\log n)/n]$. Define $\tilde{p}_{-+}:=\prob{\Bin(n+\Delta,q)>\Bin(n-1,\tilde{p}) }$ similarly as in \eqref{eq:prbpbr}. Recall  Hoeffding's inequality that for $k\leq n\tilde{p}$,
\begin{align}
    \P(\Bin(n,\tilde{p})\leq k)\leq L\exp \pth{-2n \pth{\tilde{p}-\frac{k}{n}}^2}.\label{eq:Hoeffding}
\end{align}

The following Lemma gives upper and lower bounds for the probability in \eqref{eq:Hoeffding} in the case $np-k=O(\sqrt{n\log n})$. For simplicity of notation, let us define

\begin{equation}\label{eq:Delta'}
\Delta'=\Delta'_n := \pth{\frac{p-q}{q}} n-\Delta_n
\end{equation}

\begin{lemma}\label{lemmaBinomialtail1}
Assume $0<q<p\leq 1$. For every fixed $A>0$, there exists $ M=M(A,p,q)>0 $ such that if $ 0\leq \Delta'_n\leq A\sqrt{n\log n} $,
then
\begin{align}
\frac{1}{M\sqrt{\log{n}}}\exp\pth{- \frac{C\D^2(n+\Delta)}{n^2}} \leq \P(\Bin(n+\Delta,q)>n\tilde{p})\leq M\exp \pth{- \frac{C\D^2(n+\Delta)}{n^2}},\label{eqpbrbound}
\end{align}
where $C=C(p,q)=q^3/(2(1-q)p^2)$.
\end{lemma}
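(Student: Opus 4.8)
The plan is to prove this as a sharp moderate-deviation estimate for the upper tail of $\Bin(n+\Delta,q)$, tracking the exponent to additive $O(1)$ precision and the polynomial prefactor to within constants. Write $N:=n+\Delta$, let $k$ be the least integer exceeding $n\tilde p$, and set $t:=k-Nq$ and $\sigma^2:=Nq(1-q)$. Using $\Delta=\tfrac{p-q}{q}n-\D$ one computes $Nq=np-q\D$ exactly, so that $t=\D q+n(\tilde p-p)+O(1)$; since $|\tilde p-p|\le L\log n/n$ and $\D=O(\sqrt{n\log n})$, this gives $t=\D q+O(\log n)$, a deviation of order $\sqrt{n\log n}=\Theta(\sigma\sqrt{\log n})$ above the mean. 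Thus $x:=t/\sigma=\Theta(\sqrt{\log n})$, and the two target bounds are exactly the two sides of the Gaussian tail asymptotic $\bar\Phi(x)\asymp x^{-1}e^{-x^2/2}$: the prefactor $x^{-1}=\Theta(1/\sqrt{\log n})$ is dropped in the (weaker) upper bound and retained in the lower bound, which is the source of the asymmetric $\sqrt{\log n}$ factor. First I would isolate the central term $a_k:=\binom{N}{k}q^k(1-q)^{N-k}$ and apply Stirling in the local-CLT form
$$ a_k=\frac{1+o(1)}{\sqrt{2\pi\,k(N-k)/N}}\,\exp\!\pth{-N\,\DD{k/N}{q}}, $$
valid since $k\asymp np\to\infty$ and $N-k\asymp np(1-q)/q\to\infty$; here the prefactor is $\Theta(n^{-1/2})$.

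Next comes the exponent identification, which I expect to be the crux. Writing $\hat q:=k/N$, so $\hat q-q=t/N$, a Taylor expansion of the divergence gives $N\,\DD{\hat q}{q}=\frac{N(\hat q-q)^2}{2q(1-q)}+O\pth{N|\hat q-q|^3}=\frac{t^2}{2\sigma^2}+O\!\pth{\tfrac{(\log n)^{3/2}}{\sqrt n}}$, the cubic remainder being $o(1)$ precisely because of the moderate-deviation hypothesis $\D=O(\sqrt{n\log n})$. It then remains to check that $\frac{t^2}{2\sigma^2}$ agrees with the stated exponent $\frac{C\D^2(n+\Delta)}{n^2}$ up to $O(1)$. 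Using $t=\D q+O(\log n)$ and $\sigma^2=Nq(1-q)$, the leading part is $\frac{\D^2 q}{2(n+\Delta)(1-q)}$; comparing with $\frac{q^3\D^2(n+\Delta)}{2(1-q)p^2 n^2}$ shows the two differ by the factor $\pth{\tfrac{pn}{q(n+\Delta)}}^2=1+O(\D/n)$, and since the exponent itself is $\Theta(\log n)$ while $\D/n=O(\sqrt{\log n/n})$, this discrepancy is $\Theta(\log n)\cdot O(\sqrt{\log n/n})=o(1)$. The contribution of $n(\tilde p-p)=O(\log n)$ to $t$ is likewise $o(1)$ in the exponent. Hence $N\,\DD{k/N}{q}=\frac{C\D^2(n+\Delta)}{n^2}+o(1)$, so $a_k=\Theta(n^{-1/2})\exp\!\pth{-\frac{C\D^2(n+\Delta)}{n^2}}$.

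Finally I would convert the central term into a two-sided tail bound by controlling the geometric-type decay of the summands. The ratio $a_{j+1}/a_j=\frac{(N-j)q}{(j+1)(1-q)}$ is decreasing in $j$ and equals $1-\Theta(t/\sigma^2)$ at $j=k$. For the upper bound I bound $a_{j+1}/a_j\le a_{k+1}/a_k=:r_0<1$ for all $j\ge k$, so $\sum_{j\ge k}a_j\le a_k/(1-r_0)=a_k\cdot O(\sigma^2/t)=O(\sqrt{n/\log n})\,a_k=O(1/\sqrt{\log n})\exp(-\cdots)\le M\exp(-\cdots)$. For the lower bound I keep only the block of $m=\lfloor c\,\sigma^2/t\rfloor=\Theta(\sqrt{n/\log n})$ summands $j=k,\dots,k+m$; over this range $(j-Nq)^2/(2\sigma^2)$ increases by only $O(1)$, so each term is $\ge c'a_k$, giving $\sum_{j\ge k}a_j\ge m\,c'a_k=\Theta(\sqrt{n/\log n})\,a_k=\Theta(1/\sqrt{\log n})\exp(-\cdots)\ge\frac{1}{M\sqrt{\log n}}\exp(-\cdots)$. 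Absorbing all the $o(1)$ exponent corrections and $\Theta(1)$ prefactor constants (depending only on $p,q$ and the implied constant in $\D=O(\sqrt{n\log n})$) into $M=M(\D,p,q)$ completes both inequalities.

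The main obstacle is the exponent bookkeeping in the second step: ensuring that every approximation — the cubic divergence remainder, the $\tilde p$-versus-$p$ shift, and the rewriting of $\frac{t^2}{2\sigma^2}$ in the stated $(n+\Delta)/n^2$ form with the particular constant $C=q^3/(2(1-q)p^2)$ — is controlled to additive $o(1)$, which is exactly what the moderate-deviation window $\D=O(\sqrt{n\log n})$ buys; outside this window either the cubic term or the shape discrepancy would enter the exponent at order $\omega(1)$ and the clean form would fail.
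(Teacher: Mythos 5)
Your proof is correct and takes essentially the same route as the paper's: both rest on a sharp point-probability estimate of the form $\Theta(n^{-1/2})\exp\pth{-(n+\Delta)\,D(\cdot\,\|\,q)}$, a Taylor expansion identifying the exponent as $C\D^2(n+\Delta)/n^2+o(1)$, and a window of $\Theta(\sqrt{n/\log n})$ consecutive values above the threshold on which the pmf stays within a constant factor, which is precisely the source of the $1/\sqrt{\log n}$ prefactor in the lower bound. The only cosmetic differences are that the paper quotes the Chernoff-type bounds \eqref{eqBinomialtail} from the literature rather than rederiving them via Stirling, and it implements the window step through a binomial-coefficient ratio plus unimodality, whereas you pass through the Gaussian form $t^2/(2\sigma^2)$ of the exponent and use a geometric-series bound for the upper tail.
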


\begin{proof}Assume first $0<q<p<1$. Recall from    \cite[Lemma 4.7.2]{ash2012information} the classical tail bounds for binomial distribution (here $p$ may depend on $n$):
\begin{align}
\frac{1}{L\sqrt{n}}\exp \pth{ -(n+\Delta)\DD{\frac{np}{n+\Delta}}{q} }&\leq \P(\Bin(n+\Delta,q)=np)\leq\P(\Bin(n+\Delta,q)\geq np)\nonumber\\
&\leq L\exp \pth{ -(n+\Delta)\DD{\frac{np}{n+\Delta}}{q} },\label{eqBinomialtail}
\end{align}
where the Kullback-Leibler divergence $ \DD{\cdot}{\cdot} $ is given by
$$\DD{a}{p}:=a\log\left(\frac{a}{p}\right)+(1-a)\log\left(\frac{1-a}{1-p}\right).$$
Here and later, for simplicity we may assume $np$ as well as any other quantities that are $\omega(1)$ to be integers. Applying the floor or ceiling functions will not change the final results. 

Using the inequalities $x-x^2/2\leq\log(1+x)\leq x-x^2/2+Lx^3$ and $-x-x^2/2-Lx^3\leq\log(1-x)\leq -x-x^2/2$ for $0<x<1$, we compute
\begin{align*}
    &\hspace{0.5cm}\DD{\frac{n\tilde{p}}{n+\Delta}}{q}\\
    &=\frac{np}{\frac{p}{q}n-\D}\log \left(\frac{\frac{np}{q}}{\frac{p}{q}n-\D}\right)+\left(1-\frac{np}{\frac{p}{q}n-\D}\right)\log\left(\frac{1-\frac{np}{\frac{p}{q}n-\D}}{1-q}\right)+o\left(\frac{1}{n}\right)\\
    &\geq \frac{np}{\frac{p}{q}n-\D}\left(\frac{\D}{\frac{p}{q}n-\D}-\frac{\D^2}{2(\frac{p}{q}n-\D)^2}\right)\\
    &\hspace{0.2cm}+\left(1-\frac{np}{\frac{p}{q}n-\D}\right)\left(-\frac{q\D}{(1-q)(\frac{p}{q}n-\D)}-\frac{(q\D)^2}{2(1-q)^2(\frac{p}{q}n-\D)^2}-\frac{L(q\D)^3}{(1-q)^3(\frac{p}{q}n-\D)^3}\right)+o\left(\frac{1}{n}\right)\\
    &=\left(\frac{q^3}{2(1-q)p^2}\right)\frac{\D^2}{n^2}+o\left(\frac{1}{n}\right),
\end{align*}
where we used in the last step that $\D(n)=o(n^{2/3})$. Similarly,
\begin{align*}
   \DD{\frac{n\tilde{p}}{n+\Delta}}{q}  &\leq \frac{np}{\frac{p}{q}n-\D}\left(\frac{\D}{\frac{p}{q}n-\D}-\frac{\D^2}{2(\frac{p}{q}n-\D)^2}+\frac{L\D^3}{(\frac{p}{q}n-\D)^3}\right)\\
    &\hspace{1cm}+\left(1-\frac{np}{\frac{p}{q}n-\D}\right)\left(-\frac{q\D}{(1-q)(\frac{p}{q}n-\D)}-\frac{(q\D)^2}{2(1-q)^2(\frac{p}{q}n-\D)^2}\right)+o\left(\frac{1}{n}\right)\\
    &=\left(\frac{q^3}{2(1-q)p^2}\right)\frac{\D^2}{n^2}+o\left(\frac{1}{n}\right).
\end{align*}
Plugging these estimates into \eqref{eqBinomialtail} gives the desired upper bound
\[\P(\Bin(n+\Delta,q)\geq n\tilde{p})\leq L\exp\left(-\frac{C\D^2(n+\Delta)}{n^2}\right)\]
and 
\begin{align}
    \P(\Bin(n+\Delta,q)=n\tilde{p})\geq \frac{1}{L\sqrt{n}}\exp\left(-\frac{C\D^2(n+\Delta)}{n^2}\right).\label{eqequaltonp}
\end{align}
Let us refine the lower bound \eqref{eqequaltonp} to get a lower bound for $\P(\Bin(n+\Delta,q)\geq n\tilde{p})$. Note that
\begin{align}
    \frac{\P\left(\Bin(n+\Delta,q)=n\tilde{p}+\sqrt{\frac{n}{\log n}}\right)}{\P(\Bin(n+\Delta,q)=n\tilde{p})}&=\left(\frac{q}{1-q}\right)^{\sqrt{\frac{n}{\log n}}} \, \frac{\binom{n+\Delta}{n\tilde{p}+\sqrt{\frac{n}{\log n}}}}{ \binom{n+\Delta}{n\tilde{p}}}\nonumber\\
    &\geq \left(\frac{q(n+\Delta-n\tilde{p}-\sqrt{\frac{n}{\log n}})}{(1-q)n\tilde{p}}\right)^{\sqrt{\frac{n}{\log n}}}\nonumber\\
    &\geq \left(1-M\sqrt{\frac{\log n}{ n}}\right)^{\sqrt{\frac{n}{\log n}}}\geq \frac{1}{M},\label{eqratio}
\end{align}where the constant $M$ may not be the same on each occurrence.
By unimodality of the probability mass function of the binomial distribution and \eqref{eqequaltonp}, 
\begin{align*}
    \P(\Bin(n+\Delta,q)\geq n\tilde{p})&\geq \P\left(\Bin(n+\Delta,q)= n\tilde{p}+\sqrt{\frac{n}{\log n}}\right)\sqrt{\frac{n}{\log n}}\\
    &\geq \frac{1}{M\sqrt{n}}\exp\left(-\frac{C\D^2(n+\Delta)}{n^2}\right)\sqrt{\frac{n}{\log n}}\\
    &\geq \frac{1}{M\sqrt{\log n}}\exp\left(-\frac{C\D^2(n+\Delta)}{n^2}\right).
\end{align*}
This completes the proof for $0<q<p<1$.

When $p=1$, $Y_n\sim\Bin(n-1,1)$ is deterministic, so
$S_n=Y_n-X_n=n-1-X_n$, with
$X_n\sim\Bin(n+\Delta_n,q)$. Thus, $\mathbb P(X_n>n)$ differs
from $\mathbb P(S_n<0)$ only by an $O(1)$ threshold shift, covered by \eqref{eq:tail} of Lemma \ref{lem:local}, which gives the exponent
\[
\frac{q^2(\Delta'_n)^2}{2(1-q)n}.
\]
Since $n+\Delta_n=n/q-\Delta'_n$ and $\Delta'_n=O(\sqrt{n\log n})$,
\[
\frac{q^2(\Delta'_n)^2}{2(1-q)n}
=
\frac{q^3(\Delta'_n)^2(n+\Delta_n)}{2(1-q)n^2}+o(1),
\]
which is the exponent in Lemma 3.2 for $p=1$.
\end{proof}

We now apply Lemma \ref{lemmaBinomialtail1} to obtain estimates on the (fundamentally important) probability that one binomial random variable  is larger than the other.

\begin{lemma}\label{lemmaBinomialtail2}
Assume $0<q<p<1$. For every fixed $A>0$, there exists $ M=M(A,p,q)>0 $ such that if $0\leq \Delta'\leq A\sqrt{n\log n}$, then
$$\frac{1}{M (\log n)} \exp \pth{-\frac{C'\Delta'^2}{n}} \leq \tilde{p}_{-+}\leq M(\log n) \exp \pth{-\frac{C'\Delta'^2}{n}},$$
where $C'=C'(p,q)$ is given by
\begin{align}C'(p,q)=\frac{q^2}{2p(2-p-q)}.\label{eq:cpq}\end{align}
\end{lemma}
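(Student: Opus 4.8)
The plan is to compute $\tilde p_{-+}$ by conditioning on the ``opposing'' binomial. Writing $X=\Bin(n+\Delta,q)$ and $Y=\Bin(n-1,\tilde p)$ and using independence,
\begin{equation*}
\tilde p_{-+}=\sum_{k=0}^{n-1}\P\qth{\Bin(n-1,\tilde p)=k}\,\P\qth{\Bin(n+\Delta,q)>k}.
\end{equation*}
The point mass $\P[Y=k]$ is sharply peaked at $k\approx np$ with Gaussian width $\sqrt{np(1-p)}$, while $\P[X>k]$ is a decaying upper tail whose mean sits at $(n+\Delta)q=np-\D q$. Thus the summand is a product of two competing exponentials and the sum is governed by a single dominant index $k^\ast$ where the product is maximised --- a discrete Laplace/saddle-point situation. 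Equivalently, $X-Y$ has mean $\approx-\D q$ and variance $\approx np(1-q)+np(1-p)=np(2-p-q)$, so $\tilde p_{-+}=\P[X-Y>0]$ is a moderate-deviation upper tail at distance $\D q=O(\sqrt{n\log n})$, i.e. of order $\sqrt{\log n}$ standard deviations; the Gaussian heuristic already predicts the rate $\exp(-(\D q)^2/(2np(2-p-q)))=\exp(-C'\D^2/n)$ with $C'$ as in \eqref{eq:cpq}.

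To make this rigorous I would bound the two factors by the Kullback--Leibler estimates underlying Lemma \ref{lemmaBinomialtail1}: the point-mass bound $\P[Y=k]\asymp\tfrac1{\sqrt n}\exp(-(n-1)\DD{\tfrac{k}{n-1}}{\tilde p})$ and the tail bound \eqref{eqBinomialtail}, $\P[X>k]\le L\exp(-(n+\Delta)\DD{\tfrac{k}{n+\Delta}}{q})$. The exponent of the summand is then $g(k):=(n-1)\DD{\tfrac{k}{n-1}}{\tilde p}+(n+\Delta)\DD{\tfrac{k}{n+\Delta}}{q}$. Writing $k=np+u$ and Taylor-expanding both divergences exactly as in the proof of Lemma \ref{lemmaBinomialtail1} (the conditions $\tilde p=p+O(\log n/n)$ and $\D=O(\sqrt{n\log n})$ keep all relevant $u=O(\sqrt{n\log n})$ inside the moderate regime) yields
\begin{equation*}
g(np+u)=\frac{u^2}{2np(1-p)}+\frac{(u+\D q)^2}{2np(1-q)}+o(1),
\end{equation*}
uniformly for $u=O(\sqrt{n\log n})$. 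Minimising the leading quadratic over $u$ gives $u^\ast=-\D q(1-p)/(2-p-q)$ and the minimum value $\min_u g=\D^2q^2/(2np(2-p-q))=C'\D^2/n$, which is precisely the claimed rate: the two variances $np(1-p)$ and $np(1-q)$ combine additively into the effective $np(2-p-q)$.

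With this in hand, the upper bound follows by summing the factorwise bounds: since $g$ is convex with $g''(k^\ast)\asymp 1/n$, the sum $\sum_k e^{-g(k)}$ is comparable to $\sqrt n\,e^{-g(k^\ast)}$, and combined with the $1/\sqrt n$ prefactor of the point mass this gives $\tilde p_{-+}\le M(\log n)\exp(-C'\D^2/n)$ with room to spare, the polylogarithmic slack comfortably absorbing the Stirling prefactors. For the lower bound I would, as in the refinement step \eqref{eqratio}--\eqref{eqequaltonp} of Lemma \ref{lemmaBinomialtail1}, retain only the $\asymp\sqrt{n/\log n}$ indices $k$ in a window around $k^\ast$, lower-bound each factor there by its KL estimate (the tail contributing a $1/\sqrt{\log n}$), and sum; the $\sqrt{n/\log n}$ terms each of size $\gtrsim\tfrac1{\sqrt n}\cdot\tfrac1{\sqrt{\log n}}e^{-g(k^\ast)}$ produce exactly $\tilde p_{-+}\ge\tfrac1{M(\log n)}\exp(-C'\D^2/n)$.

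The main obstacle is twofold. First, the dominant index $k^\ast$ lies $\Theta(\sqrt{n\log n})$ away from $np$, hence outside the $O(\log n)$-window in which Lemma \ref{lemmaBinomialtail1} is literally stated; one must therefore re-run its KL-expansion over the wider moderate-deviation window (harmless, since the effective gap $u+\D q$ stays $O(\sqrt{n\log n})$) and, crucially, combine the \emph{two} rate functions rather than use either tail alone. A naive union bound $\P[X\ge\tau]+\P[Y<\tau]$ is strictly lossy here: balancing the two single tails optimises at rate $q^2/(2p(\sqrt{1-p}+\sqrt{1-q})^2)<C'$, giving the wrong (too large) constant because the event genuinely requires $X$ large \emph{and} $Y$ small in a correlated way. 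Second, the bookkeeping of the Stirling/Laplace prefactors must be controlled precisely enough to land both bounds within the stated band $[\tfrac1{M\log n},\,M\log n]$; this is the source of the $\log n$ factors and where the bulk of the routine computation lies.
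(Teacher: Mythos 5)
Your proposal is correct and takes essentially the same route as the paper: both arguments reduce $\tilde{p}_{-+}$ to balancing the sum of the two binomial rate functions at an optimal crossing threshold (your $k^\ast$ and quadratic minimization giving $C'\Delta'^2/n$ is exactly the paper's minimizing $K$ and its claim $C(n,\tilde{p},q,\Delta)=C'\Delta'^2/n+O(1)$), with the lower bound from a product of the two tails near that threshold and the upper bound from summing tail products over $O(\log n)$ slabs of width $\sqrt{n/\log n}$ --- a coarse-grained version of your exact sum $\sum_k \P[Y=k]\,\P[X>k]$ plus Laplace. Your side remark that a single-threshold union bound only yields the strictly smaller constant $q^2/\bigl(2p(\sqrt{1-p}+\sqrt{1-q})^2\bigr)$ is also correct, and is precisely why both you and the paper exploit the product (intersection) structure rather than either tail alone.
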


\begin{proof}
Denote by $K=K(n,\tilde{p},q,\Delta)\in\Z\cap[(n+\Delta)q,n\tilde{p}]$ that solves the minimization problem
\begin{align*}\min_{K\in\Z\cap[(n+\Delta)q,n\tilde{p}]}\left(\frac{1}{2(1-q)}\frac{q(n+\Delta)(K-(n+\Delta)q)^2}{K^2}+\frac{1}{2\tilde{p}}\frac{(1-\tilde{p})n(n-K-n(1-\tilde{p}))^2}{(n-K)^2}\right)\end{align*} and $C(n,\tilde{p},q,\Delta)$ the attained minimum.
One checks using $0\leq \Delta'_n=O(\sqrt{n\log n})$ that \begin{align*}C(n,\tilde{p},q,\Delta)&=\min_{K\in[(n+\Delta)q,np]}\left(\frac{np(K-(n+\Delta)q)^2}{2(1-q)(np)^2}+\frac{(1-p)n(n-K-n(1-p))^2}{2p(n(1-p))^2}\right)+O(1)\\
&=C'(p,q)\frac{\Delta'^2}{n}+O(1)\end{align*} where $C'(p,q)$ is given by \eqref{eq:cpq}, and this holds uniformly for $\tilde{p}\in[0,1]\cap[p-L(\log n)/n,p+L(\log n)/n]$. We have by using independence and \eqref{eqpbrbound} that
\begin{align*}
    \tilde{p}_{-+}&\geq \P(\Bin(n+\Delta,q)\geq K)~\P(\Bin(n-1,\tilde{p})\leq K)\\
    &\geq \frac{1}{M\log n}\exp\left(-\frac{1}{2(1-q)}\frac{q(n+\Delta)(K-(n+\Delta)q)^2}{K^2}-\frac{1}{2\tilde{p}}\frac{(1-\tilde{p})n(n-K-n(1-\tilde{p}))^2}{(n-K)^2}\right)\\
    &\geq  \frac{1}{M\log n}\exp\left(-\frac{C'\Delta'^2}{n}\right).
\end{align*}
This gives the lower bound as desired.

For the upper bound, since $0\leq \Delta'=O(\sqrt{n\log n})$, we may chop the interval $[(n+\Delta)q,(n-1)\tilde{p}]$ into at most $M\log n$ pieces of lengths $\sqrt{n/\log n}$. This gives
\begin{align*}
    \tilde{p}_{-+}&\leq \sum_{j=1}^{M\log n}\P\left(\Bin(n-1,\tilde{p})<j\sqrt{\frac{n}{\log n}}+(n+\Delta)q\right)\P\left(\Bin(n+\Delta,q)>(j-1)\sqrt{\frac{n}{\log n}}+(n+\Delta)q\right)\\
    &\hspace{2cm}+\P(\Bin(n-1,\tilde{p})<(n+\Delta)q)+\P(\Bin(n+\Delta,q)>(n-1)\tilde{p}).
\end{align*}
By the same arguments as in \eqref{eqratio}, the first term is bounded by
\begin{align*}
    &\hspace{0.5cm}\sum_{j=1}^{M\log n}\P\left(\Bin(n-1,\tilde{p})<j\sqrt{\frac{n}{\log n}}+(n+\Delta)q\right)\P\left(\Bin(n+\Delta,q)>(j-1)\sqrt{\frac{n}{\log n}}+(n+\Delta)q\right)\\
    &\leq M\sum_{j=1}^{M\log n}\P\left(\Bin(n-1,\tilde{p})<j\sqrt{\frac{n}{\log n}}+(n+\Delta)q\right)\P\left(\Bin(n+\Delta,q)>j\sqrt{\frac{n}{\log n}}+(n+\Delta)q\right)\\
    &\leq M\sum_{j=1}^{M\log n}\exp(-C(n,\tilde{p},q,\Delta))\\
    &\leq M(\log n)\exp\left(-\frac{C'\Delta'^2}{n}\right),
\end{align*}where the constant $M$ may not be the same on each occurrence.
On the other hand, it is easy to check using Lemma \ref{lemmaBinomialtail1} that the remaining two terms satisfy
$$\P(\Bin(n-1,\tilde{p})<(n+\Delta)q)+\P(\Bin(n+\Delta,q)>(n-1)\tilde{p})\leq M(\log n)\exp\left(-\frac{C'\Delta'^2}{n}\right).$$This finishes the proof of the upper bound.
\end{proof}

We need one more result on moderate deviations of the difference of two binomial random variables. Set 
\begin{align}
\tau^2 = p(2-p-q),\qquad
C' = \frac{q^2}{2\tau^2} = \frac{1}{2H^2},\qquad
I_n = \frac{C'(\Delta'_n)^2}{n}.\label{eq:defs}
\end{align}

\begin{lemma}\label{lem:local}
Fix constants $0<a_0<A_0<\infty$ and assume
\begin{equation}\label{eq:md-range}
    a_0\sqrt{n\log n}\le\Delta'_n\le A_0\sqrt{n\log n}.
\end{equation}
Let $S_n=Y_n-X_n$ with $Y_n\sim\Bin(n-1,p)$ and $X_n\sim\Bin(n+\Delta_n,q)$ independent.  There is $\lambda_n=q\Delta'_n(\tau^2 n)^{-1}(1+o(1))\asymp\sqrt{\log n/n}$ such that:
\begin{enumerate}[(i)]
    \item uniformly for integer $k=o(\sqrt n)$,
    \begin{equation}\label{eq:point}
        \P(S_n=k)
        =
        \frac{\exp\{-I_n+o(1)\}\exp\{\lambda_n k\}}
             {\sqrt{2\pi\tau^2 n}};
    \end{equation}
    \item for constants $0<c<C<\infty$ depending only on $a_0,A_0,p,q$,
    \begin{equation}\label{eq:tail}
        c\frac{e^{-I_n}}{\sqrt n\lambda_n}
        \le
        \P(S_n<0)
        \le
        C\frac{e^{-I_n}}{\sqrt n\lambda_n};
    \end{equation}
    \item if $b_n=o(1/\lambda_n)$ and $b_n\to\infty$, then
    \begin{equation}\label{eq:small-window}
        \P(0\le S_n\le b_n)
        \le
        C\frac{b_n}{\sqrt n}e^{-I_n};
    \end{equation}
    \item if $x_n=o(\sqrt n)$ and $\lambda_nx_n\to\infty$, then
    \begin{equation}\label{eq:positive-window}
        \P(1\le S_n\le x_n)
        \ge
        c\frac{e^{-I_n}e^{\lambda_nx_n}}{\sqrt n\lambda_n}.
    \end{equation}
\end{enumerate}
The same conclusions hold after changing either binomial parameter count by $O(1)$.
\end{lemma}

\begin{proof}
Let $K_n(\theta)=(n-1)\log(1-p+pe^\theta)+(n+\Delta_n)\log(1-q+qe^{-\theta})$.  This formula also covers $p=1$, where the first term is simply $(n-1)\theta$.  Then $K_n'(0)=q\Delta'_n-p$ and $K_n''(0)=\tau^2n+O(\sqrt{n\log n})$, while $K_n'''(\theta)=O(n)$ uniformly for $|\theta|\le C\sqrt{\log n/n}$.  Let $\theta_n<0$ solve $K_n'(\theta_n)=0$ and set $\lambda_n=-\theta_n$.  Taylor expansion gives $\theta_n=-q\Delta'_n/(\tau^2 n)+O((\log n)/n)$ and $K_n(\theta_n)=-q^2(\Delta'_n)^2/(2\tau^2 n)+o(1)=-I_n+o(1)$.

Tilt the law by
\[
    \frac{d\P_{\theta_n}}{d\P}
    =
    \exp\{\theta_nS_n-K_n(\theta_n)\}.
\]
Under $\P_{\theta_n}$, the summands are independent bounded lattice variables, the total mean is $0$, the total variance is $\tau^2n(1+o(1))$, and the lattice span is $1$.  The standard local central limit theorem for triangular arrays of bounded lattice variables gives
\[
    \sup_k \P_{\theta_n}(S_n=k)\le \frac{C}{\sqrt n},
\]
and, uniformly for $k=o(\sqrt n)$,
\[
    \P_{\theta_n}(S_n=k)
    =
    \frac{1+o(1)}{\sqrt{2\pi\tau^2n}}.
\]
Changing measure back yields \eqref{eq:point}.

For $S_n<0$, the upper bound follows from $\sum_{j\ge1}e^{-\lambda_nj}\le C/\lambda_n$ and the tilted local upper bound.  The lower bound follows by summing over $1\le j\le\lfloor1/\lambda_n\rfloor$, where the local asymptotic is uniform.  This proves \eqref{eq:tail}.  If $0\le k\le b_n=o(1/\lambda_n)$, then $e^{\lambda_nk}=1+o(1)$, which gives \eqref{eq:small-window}.  For \eqref{eq:positive-window}, sum \eqref{eq:point} over $x_n-\lfloor1/\lambda_n\rfloor\le k\le x_n$.
The $O(1)$ stability follows because changing a binomial count by $O(1)$ changes $K_n(\theta_n)$ by $O(|\theta_n|)=o(1)$.
\end{proof}

As a consequence, we derive a concentration result on the number of vertices $v\in V^{(0)}_-$ with $S_v$ in a given window, through the second moment method.

\begin{lemma}\label{lem:counts}
Assume \eqref{eq:md-range}.  Let $J_n$ be either $(-\infty,-1]$ or $[1,x_n]$, where $x_n=o(\sqrt n)$ and $\lambda_nx_n\to\infty$.  Define $N(J_n)=\#\{v\in \Vm{0}:S_v\in J_n\}$ and $\pi_n=\P(S_v\in J_n)$.  If $n\pi_n\to\infty$, then $N(J_n)/(n\pi_n)\to1$ in probability.
\end{lemma}

\begin{proof}
For distinct $v,w\in \Vm{0}$, let $\xi$ be the indicator of the initial edge $\{v,w\}$.  Then $S_v=Z_v+\xi$ and $S_w=Z_w+\xi$, where $Z_v,Z_w,\xi$ are independent and $\xi\sim\operatorname{Bernoulli}(p)$.  Put $a_n=\P(Z_v\in J_n)$ and $b_n=\P(Z_v+1\in J_n)$.  Then $\pi_n=(1-p)a_n+pb_n$ and $\P(S_v\in J_n,S_w\in J_n)=(1-p)a_n^2+pb_n^2$.  Therefore the covariance is $p(1-p)(a_n-b_n)^2$.

If $J_n=(-\infty,-1]$, then $|a_n-b_n|\le \P(Z_v=-1)$.  By Lemma~\ref{lem:local}, this is $O(e^{-I_n}/\sqrt n)$, whereas $\pi_n\ge c e^{-I_n}/(\sqrt n\lambda_n)$.  Thus $|a_n-b_n|=o(\pi_n)$.  If $J_n=[1,x_n]$, the symmetric difference is contained in the two endpoints $Z_v=0$ and $Z_v=x_n$, so
\[
    |a_n-b_n|\le C\frac{e^{-I_n}e^{\lambda_nx_n}}{\sqrt n}.
\]
By \eqref{eq:positive-window}, $\pi_n\ge c e^{-I_n}e^{\lambda_nx_n}/(\sqrt n\lambda_n)$, so again $|a_n-b_n|=o(\pi_n)$.  Hence $\Var (N(J_n))\le n\pi_n+o(n^2\pi_n^2)$.
Since $n\pi_n\to\infty$, Chebyshev's inequality proves the claim.
\end{proof}

\section{Proofs of the main results}\label{sec:proofs}

\subsection{Sufficient conditions for consensus}

For the non-Markovian model, as mentioned previously, the condition $p>q$ ensures that there would be no change of opinion from $+$ to $-$ with high probability if we start with an advantage of the opinion $+$. To illustrate this fact, for $t\in\N$, we define $\sN_t:=\{V^{(t)}_-\cap V^{(t-1)}_+\neq\emptyset\}$, the event that  there exists an opinion change from $+$ to $-$ at time $t$. Let also $\sN:=\cup_{t\in\N}\sN_t$. 

\begin{proposition}
Consider the non-Markovian model on $ \SBM(n+\Delta,n,p,q) $ where $0<q<p\leq 1$ and $\Delta\in\N$. It holds that $\P(\sN)\to 0$.\label{prop:no+to-}
\end{proposition}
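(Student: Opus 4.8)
The plan is to show that a $+\to-$ flip can occur only for a vertex whose \emph{initial} (day-zero) neighborhood already contains strictly more $-$-neighbors than $+$-neighbors; since the day-zero SBM degrees of a fixed vertex toward the two blocks are independent binomials whose means are separated by $\Omega(n+\Delta)$, this has exponentially small probability per vertex, and a union bound over the $n+\Delta$ vertices of $\Vp{0}$ finishes the argument. The feature of the non-Markovian model that makes this work is the edge-preservation rule of Definition \ref{def:Model}(ii) combined with a monotonicity observation; in particular, I would not need any of the graph-enumeration machinery of Section \ref{sec:Enumeration} here, since the estimate is at the level of a single fixed vertex, whose edges toward each block are genuinely independent Bernoullis.

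Concretely, let $\tau$ be the first day on which some vertex flips from $+$ to $-$, so that $\sN=\{\tau<\infty\}$, and let $v$ be such a vertex; thus $v$ is $+$ on days $0,1,\dots,\tau-1$ and $\WW{v}{\tau}=-1$. Since no $+\to-$ flip has occurred before day $\tau$, the $+$-sets are nested, $\Vp{0}\subseteq\Vp{1}\subseteq\cdots\subseteq\Vp{\tau-1}$, and in particular every vertex in $\Vm{\tau-1}$ has held opinion $-$ since day $0$. I would then record two containments for the neighborhood of $v$ in $G_{\tau-1}$. First, every day-$0$ $+$-neighbor of $v$ is still a $+$-neighbor on day $\tau-1$: for a pair $(v,u)$ with $u\in\Vp{0}$, neither endpoint ever changes opinion up to day $\tau-1$, so by Definition \ref{def:Model}(ii) the edge is never resampled and is present in $G_{\tau-1}$ iff present in $G_0$. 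Second, every $-$-neighbor of $v$ on day $\tau-1$ was already a $-$-neighbor on day $0$: a current $-$-neighbor $u\in\Vm{\tau-1}$ has been $-$ throughout, so again the pair $(v,u)$ is never resampled and the edge is inherited from $G_0$. Writing $d_0^{+}(v)$ and $d_0^{-}(v)$ for the day-$0$ degrees of $v$ toward $\Vp{0}$ and $\Vm{0}$, these containments give that on day $\tau-1$ the number of $+$-neighbors of $v$ is at least $d_0^{+}(v)$ while its number of $-$-neighbors is at most $d_0^{-}(v)$. But the flip on day $\tau$ requires, by the majority rule \eqref{eq:Majority_Rule} applied to $(G_{\tau-1},\bfW_{\tau-1})$, that $v$ have strictly more $-$- than $+$-neighbors on day $\tau-1$; hence $d_0^{-}(v)>d_0^{+}(v)$.

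This yields the containment $\sN\subseteq\bigcup_{v\in\Vp{0}}\{d_0^{-}(v)>d_0^{+}(v)\}$. For a fixed $v\in\Vp{0}$ the two degrees $d_0^{+}(v)$ and $d_0^{-}(v)$ are independent with laws $\Bin(n+\Delta-1,p)$ and $\Bin(n,q)$, so the corresponding event has probability exactly $p_{+-}$ from \eqref{eq:prbpbr}, and a union bound gives $\P[\sN]\le(n+\Delta)\,p_{+-}$. It remains to bound $p_{+-}=\prob{\Bin(n,q)>\Bin(n+\Delta-1,p)}$: the means $nq$ and $(n+\Delta-1)p$ differ by $n(p-q)+(\Delta-1)p=\Omega(n+\Delta)$, so splitting at a threshold halfway between them and applying Hoeffding's inequality \eqref{eq:Hoeffding} to each binomial separately gives $p_{+-}\le L\exp(-\Omega(n+\Delta))$, whence $(n+\Delta)\,p_{+-}\to 0$.

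The main obstacle is conceptual rather than computational: it lies in the stopping-time bookkeeping of the previous paragraph, namely verifying that conditioning on ``no $+\to-$ flip before day $\tau$'' genuinely forces both the nesting of the $+$-sets and the two neighborhood containments, so that the dynamic, memory-dependent graph $G_{\tau-1}$ can be compared vertex-by-vertex against the static initial graph $G_0$. Once this comparison is in place the probabilistic estimate is routine, and it pleasantly sidesteps the dependency issues that force the independent-degree reduction elsewhere in the paper.
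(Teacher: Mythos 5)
Your argument has a genuine gap at its very first step: from $v\in\Vp{\tau-1}\cap\Vm{\tau}$ you conclude that ``$v$ is $+$ on days $0,1,\dots,\tau-1$''. This does not follow. The nesting $\Vp{0}\subseteq\Vp{1}\subseteq\cdots\subseteq\Vp{\tau-1}$ (which is indeed correct before the first $+\to-$ flip) goes in the wrong direction for this conclusion: it guarantees that every vertex of $\Vm{\tau-1}$ has been $-$ since day $0$, but it does \emph{not} guarantee that every vertex of $\Vp{\tau-1}$ has been $+$ since day $0$. The first $+\to-$ flipper $v$ may be a vertex of $\Vm{0}$ that flipped $-\to+$ at some day $s\leq\tau-1$ and is now flipping back. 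For such a vertex both of your containments fail: by Definition \ref{def:Model}(ii), at day $s$ \emph{all} of $v$'s edges are resampled (an endpoint changed opinion), so the neighborhood of $v$ in $G_{\tau-1}$ bears no relation to its neighborhood in $G_0$, and the event $\{d_0^{-}(v)>d_0^{+}(v)\}$ is irrelevant; worse, $v\notin\Vp{0}$, so it is not even indexed by your union. Consequently the claimed containment $\sN\subseteq\bigcup_{v\in\Vp{0}}\{d_0^{-}(v)>d_0^{+}(v)\}$ is false as a statement about events: on the positive-probability event that all initial degrees are ``good'' but some $w\in\Vm{0}$ flips to $+$ on day one and its freshly resampled edges happen to give it a $-$-majority, $\sN_2$ occurs while your union event does not.

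The parts of your proof that do work --- edge preservation for pairs whose endpoints never change opinion, independence of the two block-degrees of a fixed vertex, and the bound $(n+\Delta)\,p_{+-}\le L(n+\Delta)\exp(-c(n+\Delta))\to 0$ --- correctly handle flippers that were $+$ from day $0$, i.e., they essentially bound $\P[\sN_1]$. To close the gap you must also control re-flippers, and this is exactly where the paper's route differs from yours: it bounds $\P[\sN_t\mid(\cup_{s<t}\sN_s)^c]$ day by day, using that on the no-prior-flip event every current $+$ vertex (original or newly joined, the latter having had its edges resampled at its joining time with probability $p$ toward $+$ and $q$ toward $-$) has $-$-degree stochastically dominated by $\Bin(n,q)$ and $+$-degree stochastically dominating $\Bin(n,p)$, so each day costs at most $L\exp(-n/L)$; it then observes that only the first $n$ days matter, since if no $+\to-$ flip occurs within $n$ consecutive days the dynamics has already halted or $+$ has won, so the infinite union over $t$ collapses. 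Your single static comparison with $G_0$ cannot be repaired without some version of this conditional, stopping-time argument --- for instance, comparing a re-flipper's neighborhood to its degrees at the moment of its last resampling rather than to day $0$, and then summing the resulting per-day bounds.
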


\begin{proof}
For a vertex $v$, let $d_+^{(t)}(v)$ and $d_-^{(t)}(v)$ be its
numbers of neighbors in $V_+^{(t)}$ and $V_-^{(t)}$, respectively, in
$G_t$. Put $A_t:=\cap_{s=1}^t\sN_s^c$, with $A_0=\Omega$, and let
$S_t:=V_-^{(t-1)}\cap V_+^{(t)}$.
Set
$
r_n:=\P\left(\Bin(n,q)>\Bin(n+\Delta-1,p)\right),
$
where the two binomial variables are independent. Thus
$
\P(\sN_1)\le (n+\Delta)r_n .
$
If $X\sim\Bin(n,q)$ and $Y\sim\Bin(n+\Delta-1,p)$, then
$$
\Exp[Y-X]=(n+\Delta-1)p-nq\ge c_0(n+\Delta)
$$
for some $c_0=c_0(p,q)>0$. Hence Hoeffding's inequality gives
\begin{align}
    r_n=\P(Y-X<0)\le \exp(-c(n+\Delta))\label{eq:rn bound}
\end{align}
for some $c=c(p,q)>0$.

Now fix $t\ge2$ and work on $A_{t-1}$. We claim that on $A_{t-1}$, $V^{(t)}_-\cap V^{(t-1)}_+\subseteq S_{t-1}$. Indeed, if
$v\in V_+^{(t-1)}\setminus S_{t-1}$, then $v$ was already positive at time
$t-2$. Since it stayed positive at time $t-1$,
$
d_+^{(t-2)}(v)\ge d_-^{(t-2)}(v).
$
On $A_{t-1}$, vertices only keep their opinions or move from $-$ to $+$.
Thus, from time $t-2$ to $t-1$, the positive degree of $v$ cannot
decrease and its negative degree cannot increase. Hence
$$
d_-^{(t-1)}(v)\le d_-^{(t-2)}(v)\le d_+^{(t-2)}(v)\le d_+^{(t-1)}(v),
$$
so $v$ cannot switch from $+$ to $-$ at time $t$, meaning that $v\not\in V^{(t)}_-$.

It remains to bound the newly switched vertices $S_{t-1}$. Conditional on the history
just before $G_{t-1}$ is formed, if $v\in S_{t-1}$, then all edges incident
to $v$ are resampled. On $A_{t-1}$,
$
|V_-^{(t-1)}|\le n\text{ and } |V_+^{(t-1)}|-1\ge n+\Delta-1.
$
Therefore, by stochastic domination,
$$
d_-^{(t-1)}(v)\le_{\rm st}\Bin(n,q)\qquad\text{and}\qquad
d_+^{(t-1)}(v)\ge_{\rm st}\Bin(n+\Delta-1,p).
$$ Hence
$
\P(v\in V_-^{(t)}\mid \mathcal F_{t-1})\le r_n .
$
Since $|S_{t-1}|\le n$ on $A_{t-1}$, the union bound gives
$
\P(\sN_t\cap A_{t-1})\le nr_n,~ t\ge2.
$

Finally, if no $+$ to $-$ change occurs in the first $n$ steps, then after
time $n$ either the dynamics has halted or $+$ has won. Therefore, by \eqref{eq:rn bound},
$$
\P(\sN)
\le \P(\sN_1)+\sum_{t=2}^n \P(\sN_t\cap A_{t-1})
\le (n+\Delta)r_n+n^2r_n
=o(1),
$$completing the proof.
\end{proof}


We first analyze some simple sufficient conditions for opinion $+$ to win in the next day.
\begin{proposition}\label{prop2lastdaywins}
 Let  $\delta>0$ and $t\in\N$. Recall that $\Delta'=(p-q)n/q-\Delta$. For the non-Markovian model with $\Delta'=o(n^{1/2+\delta})$, it holds that
$$\P(\sP_{t+1})\geq \P\left(|V^{(t-1)}_-\cap V^{(t)}_+|\geq \delta n^{1/2+\delta}\right)-o(1).$$
\end{proposition}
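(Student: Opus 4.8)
The plan is to prove the contrapositive-flavored statement $\P[A\setminus\sP_{t+1}]=o(1)$, where $A:=\{|\Vm{t-1}\cap\Vp{t}|\ge\delta n^{1/2+\delta}\}$ is the event inside the probability; the claim then follows from $\P[\sP_{t+1}]\ge\P[\sP_{t+1}\cap A]=\P[A]-\P[A\setminus\sP_{t+1}]$. Write $S:=\Vm{t-1}\cap\Vp{t}$ for the set of vertices that switch from $-$ to $+$ on day $t$. The guiding intuition is that, since every vertex of $S$ changes opinion, all edges from a still-$-$ vertex $v\in\Vm{t}$ to $S$ are resampled in $G_t$ as \emph{fresh} $\Bern(q)$ edges; this delivers to $v$ roughly $q|S|\ge q\delta n^{1/2+\delta}$ new $+$-neighbors, which should overwhelm its $-$-majority and force it to flip on day $t+1$, thereby emptying $\Vm{t+1}$.

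First I would condition throughout on the event from Proposition \ref{prop:no+to-} that there is no $+$-to-$-$ switch in the first $t$ steps, which holds with probability $1-o(1)$; on this event $\Vp{s}$ is increasing, so $\Vm{t}\subseteq\Vm{0}$ and each $v\in\Vm{t}$ is $-$ at all times $0,\dots,t$. For such a $v$ and $0\le s\le t-1$, set $g^{(s)}(v):=d_-^{(s)}(v)-d_+^{(s)}(v)$, the excess of $-$ over $+$ neighbors of $v$ in $G_s$; since $v$ does not flip on day $s+1$ we have $g^{(s)}(v)\ge0$. The key structural observation is a monotonicity of this ``adverse gap'': because the only edges of $v$ that get resampled in $G_{s+1}$ are those to the switchers $S_s:=\Vm{s}\cap\Vp{s+1}$ (as $v$ keeps its opinion), a direct count of preserved versus resampled edges gives
\[
g^{(s+1)}(v)=g^{(s)}(v)-Z_s(v)-\Bin(|S_s|,q),
\]
where $Z_s(v)\ge0$ is the number of $G_s$-neighbors of $v$ lying in $S_s$ and $\Bin(|S_s|,q)$ counts the freshly resampled $v$–$S_s$ edges present in $G_{s+1}$. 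In particular $g^{(s+1)}(v)\le g^{(s)}(v)$, so $g^{(t-1)}(v)\le g^{(0)}(v)$, which reduces everything to the initial gap.

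Next I would control the initial gap uniformly. Marginally $g^{(0)}(v)\sim\Bin(n-1,p)-\Bin(n+\Delta,q)$ with the two binomials independent, so $\Exp[g^{(0)}(v)]=q\D-p=o(n^{1/2+\delta})$ using $\Delta'=o(n^{1/2+\delta})$; by Hoeffding \eqref{eq:Hoeffding} and a union bound over the at most $n$ vertices of $\Vm{0}$ (which uses only the marginals, so the dependence among within-block degrees is irrelevant and no enumeration machinery is needed here), a.a.s.\ $\max_{v\in\Vm{0}}g^{(0)}(v)\le\theta_n$ with $\theta_n:=|q\D|+C\sqrt{n\log n}=o(n^{1/2+\delta})$ for a suitable $C=C(p,q)$. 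Finally, conditioning on $\calF_t:=\sigma(G_0,\dots,G_{t-1},\bfW_0,\dots,\bfW_t)$ on the intersection of $A$, the no-$+$-to-$-$ event, and $\{\max_{v}g^{(0)}(v)\le\theta_n\}$, each $v\in\Vm{t}$ flips on day $t+1$ as soon as the number of resampled $+$-edges from $v$ to $S$ exceeds $g^{(t-1)}(v)$; dropping $Z_{t-1}(v)\ge0$ it suffices that $\Bin(|S|,q)>g^{(t-1)}(v)$. Since $g^{(t-1)}(v)\le\theta_n<\tfrac{q\delta}{2}n^{1/2+\delta}$ for large $n$ while $|S|\ge\delta n^{1/2+\delta}$, Hoeffding gives $\P[\Bin(|S|,q)\le\tfrac{q\delta}{2}n^{1/2+\delta}\mid\calF_t]\le\exp(-cn^{1/2+\delta})$, so a union bound over the at most $n$ vertices of $\Vm{t}$ shows that a.a.s.\ all of them flip, whence $\Vm{t+1}=\emptyset$ and $\sP_{t+1}$ holds; collecting the $o(1)$ error terms yields $\P[A\setminus\sP_{t+1}]=o(1)$.

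I expect the monotonicity step to be the main obstacle, or rather the crux. A priori the adverse gap $g^{(t-1)}(v)$ of a stubborn $-$-vertex could have grown over the first $t-1$ days, and bounding it directly would require tracking the correlated evolution of all degrees under the memory structure of the non-Markovian graph. The observation that each day's resampling can only shrink this gap (a stubborn vertex only loses $-$-neighbors to the switchers and only gains $+$-neighbors) collapses the entire history into the single, easily controlled initial gap $g^{(0)}(v)$; verifying this identity cleanly, together with the bookkeeping of which edges are preserved and which are freshly resampled conditionally on $\calF_t$, is where the care is needed.
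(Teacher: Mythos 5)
Your proof is correct and follows essentially the same route as the paper's: both intersect with the no-$+$-to-$-$ event from Proposition \ref{prop:no+to-}, control the initial degrees of $-$ vertices via Hoeffding and a union bound, exploit the monotonicity that a stubborn $-$ vertex only gains preserved $+$ neighbors and loses $-$ neighbors under the non-Markovian resampling, and then let the fresh $\Bin(|S|,q)$ edges to the day-$t$ switchers overwhelm the remaining gap, finishing with a union bound over $V^{(t)}_-$. The only difference is presentational: you package the two degree counts into the single gap $g^{(s)}(v)$ with an explicit one-step recursion, whereas the paper tracks $v^{(s)}_+$ and $v^{(s)}_-$ separately via the bounds $v^{(t-1)}_+\geq v^{(0)}_+$ and $v^{(t)}_-\leq v^{(0)}_-$.
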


\begin{proof} For a vertex $v$, denote  by $v^{(t)}_+,v^{(t)}_-$ the number of neighbors of $v$ in $V^{(t)}_+,V^{(t)}_-$ respectively for $t\in\N$. First, we intersect with the event $\sN^c$ that there is no opinion change from $+$ to $-$ throughout the dynamics, thus removing a set of probability $o(1)$ by Proposition \ref{prop:no+to-}. Next, we intersect with the event 
$$\sE:=\left\{v^{(0)}_-\leq np+n^{(\delta+1)/2}\text{ and }v^{(0)}_+\geq (n+\Delta)q-n^{(\delta+1)/2}\text{ for any }v\in V^{(0)}_-\right\}.$$By \eqref{eq:Hoeffding} and the union bound, it holds that $\P(\sE^c)=o(1)$. Therefore, it suffices to prove
\begin{align}\P\left(\sP_{t+1}^c\cap\sE\cap(\bigcap_{s\leq t}\sN_s^c)\cap\{|V^{(t-1)}_-\cap V^{(t)}_+|\geq \delta n^{1/2+\delta}\}\right)= o(1).\label{eq:o(1)claim}\end{align}
Let $\F_t$ be the $\sigma$-algebra generated by $ \pth{G_s,\bfW_s}_{0\leq s\leq t-1} $, i.e., the first $t-1$ days of the dynamics. In the following, we condition on $\F_t$ and let $v\in V^{(t)}_-$. Denote by $\P_t$ the corresponding conditional probability. Note that $V^{(t)}_-$ is $\F_t$-measurable while $v^{(t)}_+$ is not. By definition, on the event $\sE\cap (\bigcap_{s\leq t}\sN_s^c)$ we have $$v^{(t-1)}_+\geq v^{(0)}_+\geq (n+\Delta)q-n^{(\delta+1)/2}\text{ and }v^{(t)}_-\leq v^{(0)}_-\leq np+n^{(\delta+1)/2}.$$
 On the other hand, on the event $\{|V^{(t-1)}_-\cap V^{(t)}_+|\geq \delta n^{1/2+\delta}\}\cap(\bigcap_{s\leq t}\sN_s^c)$, it holds that $$v^{(t)}_+\geq_{\rm st}v^{(t-1)}_++\Bin(\delta n^{1/2+\delta},q)\geq (n+\Delta)q-n^{(\delta+1)/2}+\Bin(\delta n^{1/2+\delta},q),$$where $\leq_{\rm st}$ means (first-order) stochastic dominance.
It follows that on the event $\sX:=\{|V^{(t-1)}_-\cap V^{(t)}_+|\geq \delta n^{1/2+\delta}\}\cap(\bigcap_{s\leq t}\sN_s^c)\cap \sE$, we have
\begin{align*}\P_t\left(\{v\not\in V_+^{(t+1)}\}\cap\sX\right)&=\P_t\left(\{v^{(t)}_+\leq v^{(t)}_-\}\cap \sX\right)\\
&\leq \P\left(\{(n+\Delta)q-n^{(\delta+1)/2}+\Bin(\delta n^{1/2+\delta},q)\leq np+n^{(\delta+1)/2}\}\cap \sX\right)\\
&\leq \exp\left(-2\delta q^2n^{(\delta+1)/2}\right),\end{align*}where we have used our assumption $\Delta'=o(n^{1/2+\delta})$ and \eqref{eq:Hoeffding} in the last inequality.
We finally conclude from a union bound that
$$\P_t\left(\sP_{t+1}^c\cap\sE\cap(\bigcap_{s\leq t}\sN_s^c)\cap\{|V^{(t-1)}_-\cap V^{(t)}_+|\geq \delta n^{1/2+\delta}\}\right)\leq n\exp\left(-2\delta q^2n^{(\delta+1)/2}\right).$$
Taking expectation yields the claim \eqref{eq:o(1)claim} and hence completes the proof.
\end{proof}

\begin{proposition}\label{prop1lastdaywins}Let $t\in\N_0$.
For the Markovian model,
$$\P(\sP_{t+1})\geq \P\left(|V^{(t)}_+|\geq \frac{p}{q}|V^{(t)}_-|+L\sqrt{|V^{(t)}_-|\log |V^{(t)}_-|}\right)-o(1).$$
\end{proposition}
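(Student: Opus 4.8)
The plan is to condition on the opinion vector $\bfW_t$ at day $t$ and exploit the defining feature of the Markovian model: given $\bfW_t$, the graph $G_t$ is a fresh $\SBM(m,k,p,q)$ independent of the past, where $m:=|\Vp{t}|$, $k:=|\Vm{t}|$, and $N:=m+k=2n+\Delta$. Write $\P_t$ for the conditional probability given $\bfW_t$, and let $\calA:=\{m\ge \tfrac{p}{q}k+L\sqrt{k\log k}\}$ be the event in the right-hand side, which is $\bfW_t$-measurable. It suffices to prove $\P_t[\sP_{t+1}^c]=o(1)$ uniformly on $\calA$, since then taking expectations yields $\P[\sP_{t+1}]\ge \P[\sP_{t+1}\cap\calA]=\E[\1_\calA(1-\P_t[\sP_{t+1}^c])]\ge \P[\calA]-o(1)$, which is exactly the claim.

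Opinion $+$ wins at day $t+1$ precisely when every vertex of $\Vm{t}$ flips to $+$ and every vertex of $\Vp{t}$ is retained. Under the fresh SBM, a fixed $v\in\Vm{t}$ has $\Bin(m,q)$ neighbors of opinion $+$ and $\Bin(k-1,p)$ of opinion $-$, so it fails to flip with probability $\P[\Bin(m,q)\le \Bin(k-1,p)]$; a fixed $v\in\Vp{t}$ has $\Bin(m-1,p)$ and $\Bin(k,q)$ such neighbors and, since a tie retains $+$, fails to be retained with probability $\P[\Bin(m-1,p)<\Bin(k,q)]$. A union bound gives
$$\P_t[\sP_{t+1}^c]\le k\,\P[\Bin(m,q)\le \Bin(k-1,p)]+m\,\P[\Bin(m-1,p)<\Bin(k,q)].$$
The retention term is negligible: on $\calA$ one has $k\le\tfrac{q}{p+q}N$, so the mean gap $(m-1)p-kq=Np-k(p+q)-p\ge N(p-q)-p=\Omega(N)$, and Hoeffding's inequality bounds each retention probability by $\exp(-\Omega(N))$, whence the whole term is $\exp(-\Omega(N))$.

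The essential estimate is the flip term. Writing $\Gamma:=mq-(k-1)p=Nq-k(p+q)+p$ for the mean gap, I would split at the midpoint $\theta:=\tfrac12(mq+(k-1)p)$, bounding $\P[\Bin(m,q)\le\Bin(k-1,p)]\le \P[\Bin(m,q)\le\theta]+\P[\Bin(k-1,p)\ge\theta]$ and applying the Hoeffding bound \eqref{eq:Hoeffding} to each piece to obtain an estimate of the form $L\exp(-\Gamma^2/(2N))$. It then remains to check $\Gamma^2/N\ge 3\log N$ (say) uniformly on $\calA$, which I would do by a dichotomy at the threshold $cN$ with $c:=\tfrac{q}{2(p+q)}$. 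If $k\le cN$, then $k(p+q)\le\tfrac{q}{2}N$, so $\Gamma\ge\tfrac{q}{2}N=\Omega(N)$ and the per-vertex probability is $\exp(-\Omega(N))$; if $cN<k\le\tfrac{q}{p+q}N$, then $\calA$ forces $\Gamma\ge Lq\sqrt{k\log k}\ge Lq\sqrt{cN\log N}\,(1-o(1))$, whence $\Gamma^2/N\ge \tfrac{L^2q^3}{2(p+q)}\log N\,(1-o(1))$, which exceeds $3\log N$ once $L$ is a large enough constant depending only on $p,q$. In either range the per-vertex flip probability is $o(1/N)$, and since $k\le N$ the flip term is $o(1)$.

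The main obstacle is precisely this uniformity over the full range of $k=|\Vm{t}|$: the bound $\Gamma\ge Lq\sqrt{k\log k}$ supplied by $\calA$ is only of the borderline order $\sqrt{N\log N}$ near $k\approx\tfrac{q}{p+q}N$, and is far too weak for small $k$, where, however, the true gap $\Gamma$ is itself of order $N$. Reconciling these two extremes is what forces the dichotomy above and fixes how large $L$ must be taken; the remaining steps are a routine conditioning-and-union-bound, entirely parallel to the non-Markovian Proposition \ref{prop2lastdaywins}.
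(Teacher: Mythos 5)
Your proof is correct and follows essentially the same route as the paper's: condition on $\bfW_t$ (the Markov property makes $G_t$ a fresh SBM given the current opinions), bound the per-vertex failure probability by binomial tail estimates, and finish with a union bound. The paper's own proof is far terser—it reduces to $t=0$, invokes the arguments of Lemma \ref{lemmaBinomialtail2} to get a per-vertex flip-failure bound of $Ln^{-2}$, and union-bounds over $V^{(0)}_-$ only—so your explicit Hoeffding/midpoint estimate, the dichotomy in $k$ ensuring uniformity over configurations, and the treatment of the retention of $+$ vertices simply supply details the paper leaves implicit.
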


\begin{proof} Suppose first that $p<1$. By the Markov property, we may assume $t=0$ and write $n=|V^{(t)}_-|$ and $$\Delta_n\geq \left(\frac{p-q}{q}\right)n+L_1\sqrt{n\log n}.$$
Using similar arguments as in Lemma \ref{lemmaBinomialtail2}, one can prove that by choosing $L_1$ above large enough,  for any $v\in V^{(0)}_-$,
$$\P\left(v\in V^{(1)}_-\right)\leq Ln^{-2}.$$
Using a union bound over the set $V^{(0)}_-$ finishes the proof.

It remains to treat the endpoint case $p=1$. In this case, every pair of vertices
with the same opinion is connected. Hence, for any $v\in V_-^{(t)}$, the number
of neighbors of $v$ in $V_-^{(t)}$ is deterministically $|V_-^{(t)}|-1$, while
the number of neighbors of $v$ in $V_+^{(t)}$ has distribution
$\Bin(|V_+^{(t)}|,q)$.

Assume that
\[
|V_+^{(t)}|\ge \frac{1}{q}|V_-^{(t)}|+L\sqrt{|V_-^{(t)}|\log |V_-^{(t)}|}.
\]
Then, for each $v\in V_-^{(t)}$,
\[
\mathbb E[d_+^{(t)}(v)]
=q|V_+^{(t)}|
\ge |V_-^{(t)}|+qL\sqrt{|V_-^{(t)}|\log |V_-^{(t)}|}.
\]
By Chernoff's inequality, choosing $L=L(q)$ sufficiently large gives
\[
\mathbb P\left(d_+^{(t)}(v)\le |V_-^{(t)}|-1\right)
\le |V_-^{(t)}|^{-2}.
\]
A union bound over all vertices in $V_-^{(t)}$ implies that, with probability
$1-o(1)$, every vertex in $V_-^{(t)}$ has a strict majority of $+$ neighbors and
therefore changes to opinion $+$ at time $t+1$. Since the set $V_+^{(t)}$ is nondecreasing, this completes the proof.
\end{proof}

Next, we show for the Markovian model that a sufficient lead of $\Omega(n)$ will guarantee a win a.a.s.

\begin{proposition}\label{prop:model2winseventually}
Assume $0<q<p<1$. Let $\delta>0$ be arbitrary, then for the Markovian model,  uniformly for $n,\Delta$ such that $\Delta>\delta n$, it holds 
$\P(\sP)=1-o(1).$
\end{proposition}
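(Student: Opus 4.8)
The plan is to follow the random walk $X_t := |\Vp{t}|$ on $\{0,1,\dots,N\}$ with $N:=2n+\Delta$, which by the full resampling in the Markovian model is a time-homogeneous Markov chain with absorbing states $0$ and $N$, and for which $\sP=\{X_t\to N\}$. Writing $a$ for the current value and $b:=N-a$, one fresh $\SBM(a,b,p,q)$ is drawn at each step, so a fixed vertex of $\Vp{t}$ flips down with probability $\pi_{+-}(a):=\prob{\Bin(a-1,p)<\Bin(b,q)}$ and a fixed vertex of $\Vm{t}$ flips up with probability $\pi_{-+}(a):=\prob{\Bin(a,q)>\Bin(b-1,p)}$. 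The chain starts at $a_0=n+\Delta=N/2+\Delta/2\geq N/2+\delta n/2$. Proposition \ref{prop1lastdaywins} shows that once the chain reaches the level $a^{\dagger}:=\frac{p}{p+q}N+L\sqrt{N\log N}$ it wins on the next day with probability $1-o(1)$; since $a^{\dagger}-N/2=\Theta(n)$, when $\Delta\leq (p-q)n/q$ the chain starts below $a^{\dagger}$ and must first climb a distance $\Theta(n)$. In this range the per-step drift is only exponentially small, so the climb takes exponentially many steps and a naive union bound over time fails; the point is therefore to control the ascent by a Gambler's-ruin (supermartingale) estimate.

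Set $m^{\ast}:=N/2+\tfrac{\delta n}{4}$, so that $a_0-m^{\ast}\geq \delta n/4$, and work on the interval $[m^{\ast},a^{\dagger}]$. The heart of the argument is the comparison $\pi_{-+}(a)\geq 2\,\tfrac{a}{b}\,\pi_{+-}(a)$ for all $a\in[m^{\ast},a^{\dagger}]$ and large $n$. Indeed, $\pi_{-+}(a)$ and $\pi_{+-}(a)$ are governed by the large-deviation rates of $\{\Bin(a,q)>\Bin(b-1,p)\}$ and $\{\Bin(a-1,p)<\Bin(b,q)\}$, with mean gaps $bp-aq$ and $ap-bq$; on this interval $a-b\geq \delta n/2$, so $(ap-bq)-(bp-aq)=(a-b)(p+q)=\Theta(n)>0$, i.e.\ the down-flip gap strictly dominates the up-flip gap and the two probabilities are separated by a factor $\exp(\Theta(n))$ (quantified through Lemma \ref{lemmaBinomialtail1}), with $b/a$ bounded away from $0$. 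Now fix $\lambda=2$ and put $f(a):=\lambda^{-a}$. Using that the per-vertex flip events are, after the reduction to the independent degree model of Section \ref{sec:Enumeration}, essentially independent, one obtains
\begin{equation*}
\Expect\qth{\lambda^{-(X_{t+1}-a)}\mid X_t=a}\leq \pth{1-(1-\lambda^{-1})\pi_{-+}(a)}^{b}\pth{1+(\lambda-1)\pi_{+-}(a)}^{a}\leq \exp\pth{-b\pi_{-+}(a)\tfrac{\lambda-1}{\lambda}+a\pi_{+-}(a)(\lambda-1)},
\end{equation*}
which is $\leq 1$ exactly when $\lambda\leq b\pi_{-+}(a)/(a\pi_{+-}(a))$; by the comparison this holds for $\lambda=2$ and large $n$. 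Hence $f(X_{t\wedge\tau})$ is a bounded supermartingale, where $\tau$ is the first exit time of $(m^{\ast},a^{\dagger})$.

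Since $p,q\in(0,1)$, from every interior state some flip occurs with positive probability, so $\tau<\infty$ almost surely and optional stopping gives $f(m^{\ast})\,\prob{X_\tau\leq m^{\ast}}\leq \Expect[f(X_\tau)]\leq f(a_0)$, whence
\begin{equation*}
\prob{\text{chain reaches }m^{\ast}\text{ before }a^{\dagger}}\leq \lambda^{-(a_0-m^{\ast})}\leq 2^{-\delta n/4}=o(1),
\end{equation*}
uniformly over all $n$ and all $\Delta>\delta n$ (a larger $\Delta$ only improves the exponent). On the complementary event the chain reaches some state $X_\sigma\geq a^{\dagger}$ at a finite time $\sigma$, and applying Proposition \ref{prop1lastdaywins} at time $\sigma$ via the strong Markov property — its hypothesis $X_\sigma\geq \tfrac{p}{q}(N-X_\sigma)+L\sqrt{(N-X_\sigma)\log(N-X_\sigma)}$ being implied by $X_\sigma\geq a^{\dagger}$ — yields $\prob{\sP}\geq 1-o(1)$.

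The main obstacle is precisely the supermartingale inequality: one must verify the exponential-moment bound $\Expect[\lambda^{-(X_{t+1}-a)}\mid X_t=a]\leq 1$ uniformly on $[m^{\ast},a^{\dagger}]$. This needs both the sharp separation of the up- and down-flip probabilities from the binomial tail estimates of Section \ref{sec:binomtails}, and a justification that the per-vertex flip events are sufficiently close to independent to license the product bound above (the shared cross-block edges create only weak dependence); the latter is exactly where the McKay--Wormald reduction to the independent degree model is invoked.
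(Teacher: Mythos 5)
Your two-phase skeleton (climb to a level where Proposition \ref{prop1lastdaywins} applies, then win the next day) is the same as the paper's, but the climbing phase of your argument has a genuine gap at exactly the step you flag as the crux. The supermartingale inequality with $\lambda=2$ requires an exponential-moment product bound of the form $\E[\lambda^{D}]\le(1+(\lambda-1)\pi_{+-})^{a}$ for the number $D$ of simultaneous down-flips, and the McKay--Wormald reduction cannot deliver it. Theorem \ref{thm:Enumeration_ER} and Lemma \ref{lem:True_to_Conditioned2} transfer \emph{probabilities of events} between the true and independent models up to a multiplicative $1+O(n^{-c})$ factor \emph{plus an additive error of size $n^{-\omega(1)}$} (the bad set $B_p^n$), and $n^{-\omega(1)}$ means super-polynomially small, not exponentially small. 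An exponential moment weights configurations with $D$ down-flips by $\lambda^{D}\le 2^{n}$, so the additive error contributes up to $2^{n}\cdot n^{-\omega(1)}$, which diverges: the transfer fails precisely for the quantity your supermartingale hinges on. Two further problems: Lemma \ref{lem:True_to_Conditioned2} is formulated for marginal count events on a single block, and even under the integrated model the degrees are only \emph{conditionally} independent given the Gaussian parameter $r$, so plain independence of flips is not what the reduction produces; and you cite Lemma \ref{lemmaBinomialtail1} for the separation $\pi_{-+}(a)\ge 2(a/b)\pi_{+-}(a)$, but that lemma assumes $\Delta'=O(\sqrt{n\log n})$, whereas on your interval the imbalance $a-b$ is $\Theta(n)$ (the separation is still true, but via Hoeffding \eqref{eq:Hoeffding} and the KL bounds, i.e.\ ``arguments as in Lemma \ref{lemmaBinomialtail2}'', not via that lemma).

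The difficulty that motivated your construction is also a misdiagnosis: a union bound does not fail here, provided one unions over \emph{non-trivial moves} rather than time steps, and this is exactly the paper's proof. Conditionally on the event that some vertex flips at a given step, the probability that any down-flip occurs is at most $\P[|\Vp{t}\cap\Vm{t+1}|\ge 1]\,/\,\P[|\Vm{t}\cap\Vp{t+1}|\ge 1]\le L^{2}n^{2}\exp(-(C_1-C_2)n)\le L\exp(-n/L)$, where $C_2<C_1$ holds uniformly on the relevant range precisely because $\Delta>\delta n$; a monotone climb from $n+\Delta$ to $2n+\Delta-n/L$ takes at most $n$ non-trivial moves, so a union bound over those moves shows the walk is non-decreasing with probability $1-o(1)$, and Proposition \ref{prop1lastdaywins} finishes. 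This needs only single-vertex union bounds and never touches the joint law of many flips, which is why the paper never needs the independence you cannot justify. If you want to keep the Lyapunov/gambler's-ruin framing, it can be repaired without any independence: take $\lambda=1+(\log n)/n$, bound $\E[\lambda^{D-U}]\le(\E[\lambda^{2D}]\,\E[\lambda^{-2U}])^{1/2}$ by Cauchy--Schwarz, and use only the marginal bounds $\P[D\ge 1]\le Ln\exp(-C_1 n)$ and $\P[U\ge 1]\ge \exp(-C_2 n)/L$ (so $\E[\lambda^{2D}]\le 1+n^{2}\cdot Ln\exp(-C_1n)$ and $\E[\lambda^{-2U}]\le 1-\tfrac{\log n}{Ln}\exp(-C_2n)$, whose product is $\le 1$ since $C_1>C_2$); the exit estimate then gives $\lambda^{-\delta n/4}=n^{-\delta/4}=o(1)$, which suffices. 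As written, however, the central inequality of your proof is unsupported.
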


\begin{proof}
Recall that $\{|V^{(t)}_+|\}$ forms a Markovian random walk on $\{0,1,\dots,2n+\Delta\}$. We have shown in Proposition \ref{prop1lastdaywins} that once the random walk reaches $2n+\Delta-n/L$ then $\sP$ happens with high probability; thus it suffices if we show that starting from $n+\Delta$, the random walk is monotonically non-decreasing. This motivates the study of the following conditional probability. Writing $|V^{(0)}_-|=n$ and $|V^{(0)}_+|=n+\Delta$, we have
\begin{align*}p_{n,n+\Delta}:=\P\bigg(|V^{(0)}_+\cap V^{(1)}_-|\geq 1\big\vert |&V^{(0)}_+\cap V^{(1)}_-|+|V^{(0)}_-\cap V^{(1)}_+|\neq 0\bigg)\\ &=\frac{\P\left(|V^{(0)}_+\cap V^{(1)}_-|\geq 1\right)}{\P\left(|V^{(0)}_+\cap V^{(1)}_-|+|V^{(0)}_-\cap V^{(1)}_+|\neq 0\right)}\leq \frac{\P\left(|V^{(0)}_+\cap V^{(1)}_-|\geq 1\right)}{\P\left(|V^{(0)}_-\cap V^{(1)}_+|\geq 1\right)}.
\end{align*}
Using a union bound and similar arguments as in Lemma \ref{lemmaBinomialtail2}, we compute
\begin{align*}\P\left(|V^{(0)}_+\cap V^{(1)}_-|\geq 1\right)&\leq Ln\P(\Bin(n,q)>\Bin(n+\Delta-1,p))\\
&\leq Ln^2\max_{j\in\Z\cap[nq,(n+\Delta-1)p]}\P(\Bin(n,q)>j)~\P(\Bin(n+\Delta-1,p)<j)\\
&\leq Ln^2\exp(-C_1n)\end{align*}
and 
\begin{align*}
    \P\left(|V^{(0)}_-\cap V^{(1)}_+|\geq 1\right)&\geq \P(\Bin(n+\Delta,q)>\Bin(n-1,p))\\
    &\geq \max_{k\in\Z\cap[nq,(n+\Delta-1)p]}\P(\Bin(n+\Delta,q)>k)~\P(\Bin(n-1,p)<k)\\
    &\geq \frac{1}{L}\exp(-C_2n)
\end{align*}
where $C_1,C_2$ are constants that do not depend on $n$ and $C_2<C_1$ (here we use $\Delta>\delta n$). This shows $p_{n,n+\Delta}\leq L\exp(-n/L)$. Using a union bound shows that with probability $1-o(1)$, the random walk $\{|V^{(t)}_+|\}$ increases monotonically from $n+\Delta$ to $2n+\Delta-n/L$, completing the proof.
\end{proof}

\subsection{Proof of Theorem \ref{thmmodel1}}

We recall from \eqref{eq:prbpbr} that $p_{-+}=\prob{\Bin(n+\Delta,q)>\Bin(n-1,p) }$. It follows from Hoeffding's inequality that uniformly for $\Delta\in[0,n/L]$, it holds
\begin{align}
    p_{-+}\leq L\exp\left(-\frac{n}{L}\right).\label{eq:expsmall}
\end{align}
Next, we study the transition probabilities of the Markov chain $\{|V^{(t)}_+|\}_{t\in\N}$. Consider the Markovian model on $ \SBM(j,2n+\Delta-j,p,q) $ where $j\in[2n+\Delta]$. We denote by \begin{align}\begin{split}
    &p_r(j)=p_r(j;n,\Delta):=\P\left(|V^{(0)}_-\cap V^{(1)}_+|=1\right)\\ &\text{ and  }\ p_\ell(j)=p_\ell(j;n,\Delta):=\P\left(|V^{(0)}_+\cap V^{(1)}_-|=1\right),
\end{split}\label{eq:pj}\end{align}where $|V^{(0)}_+|=j$ and $|V^{(0)}_-|=2n+\Delta-j$. As a special case that corresponds to the first step of the Markovian model on $ \SBM(n+\Delta,n,p,q) $, we write
$$p_r=p_r(n+\Delta)=\P\left(|V^{(0)}_-\cap V^{(1)}_+|=1\right)\ \text{ and  }\ p_\ell=p_\ell(n+\Delta)=\P\left(|V^{(0)}_+\cap V^{(1)}_-|=1\right).$$

\begin{lemma}\label{lemma:probratio}
Assume $0<q<p<1$. Consider the Markovian model on $ \SBM(n+\Delta,n,p,q) $ where $n+\Delta=|V^{(0)}_+|> |V^{(0)}_-|=n$.
Then uniformly in  $n$ and $\Delta\leq n/L$, it holds that
\begin{align} \frac{p_r}{p_\ell} \geq 1+\frac{1}{L}, \label{eq:prpl}\end{align}
and
\begin{align}\frac{\P \pth{|V^{(0)}_-\cap V^{(1)}_+|\geq 2}+\P\pth{|V^{(0)}_+\cap V^{(1)}_-|\geq 2}}{p_r+p_\ell}\leq L\exp\left(-\frac{n}{L}\right).\label{eq:not>2}\end{align}
\end{lemma}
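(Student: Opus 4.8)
The plan is to reduce both parts of the lemma to the single-vertex flip probabilities $p_{-+}$ and $p_{+-}$ from \eqref{eq:prbpbr}, and then to compare these two probabilities by a telescoping identity. First I would set up a \emph{pairwise decoupling}: for $v\in V^{(0)}_-$ let $A_v$ be the event that $v$ flips to $+$, so $\P[A_v]=p_{-+}$. For two distinct $u,v\in V^{(0)}_-$, the events $A_u,A_v$ are determined by the edges incident to $u$ and to $v$ respectively, and these two edge sets are disjoint except for the single edge $\{u,v\}$; conditioning on the status of that edge therefore makes $A_u$ and $A_v$ independent. Since this conditioning only forces or removes one same-opinion neighbor, each conditional marginal is at most $\P[\Bin(n+\Delta,q)>\Bin(n-2,p)]\le L\,p_{-+}$, whence $\P[A_u\cap A_v]\le L\,p_{-+}^2$. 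Summing over pairs gives $\P[|V^{(0)}_-\cap V^{(1)}_+|\ge 2]\le L(np_{-+})^2$, and a Bonferroni bound yields $p_r=np_{-+}\pth{1+O(np_{-+})}$. The identical argument on $V^{(0)}_+$ gives $\P[|V^{(0)}_+\cap V^{(1)}_-|\ge 2]\le L((n+\Delta)p_{+-})^2$ and $p_\ell=(n+\Delta)p_{+-}\pth{1+O((n+\Delta)p_{+-})}$. Because $\Delta\le n/L$, the bound \eqref{eq:expsmall} gives $np_{-+}=o(1)$ and $(n+\Delta)p_{+-}\le 2np_{-+}=o(1)$, so these corrections are negligible.

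Statement \eqref{eq:not>2} then follows at once: the numerator is at most $L(np_{-+})^2+L((n+\Delta)p_{+-})^2$, while $p_r+p_\ell\ge \tfrac12 np_{-+}$; using $p_{+-}\le p_{-+}$ and $n+\Delta\le 2n$, the ratio is bounded by $L\,np_{-+}\le L\e^{-n/L}$ via \eqref{eq:expsmall}. For \eqref{eq:prpl} the same reduction gives $p_r/p_\ell=\tfrac{n}{n+\Delta}\,\tfrac{p_{-+}}{p_{+-}}\,(1+o(1))$, and since $\tfrac{n}{n+\Delta}\ge \tfrac{L}{L+1}$ it suffices to prove $p_{-+}/p_{+-}\ge 1+c$ for a constant $c=c(p,q)>0$, uniformly in $\Delta\in[1,n/L]$.

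To compare $p_{-+}$ and $p_{+-}$ I would write $F(m,k):=\P[\Bin(m,q)>\Bin(k,p)]$, so $p_{-+}=F(n+\Delta,n-1)$ and $p_{+-}=F(n,n+\Delta-1)$. Decomposing $\Bin(m+1,q)=\Bin(m,q)+\Bern(q)$ and $\Bin(k,p)=\Bin(k-1,p)+\Bern(p)$ gives the exact identities
\begin{align*}
F(m+1,k)&=F(m,k)+q\,\P[\Bin(m,q)=\Bin(k,p)],\\
F(m,k-1)&=F(m,k)+p\,\P[\Bin(m,q)=\Bin(k-1,p)+1].
\end{align*}
Telescoping along the path $(n,n+\Delta-1)\to(n+\Delta,n+\Delta-1)\to(n+\Delta,n-1)$ (first raising the first argument by $\Delta$, then lowering the second by $\Delta$), each of the $2\Delta$ steps multiplies $F$ by a factor $1+\Theta(\P[=]/\P[>])\ge 1$; retaining only the first step already gives $p_{-+}/p_{+-}\ge 1+q\,\gamma_0$ once we show $\P[\Bin(m,q)=\Bin(k,p)]/\P[\Bin(m,q)>\Bin(k,p)]\ge \gamma_0>0$. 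The crux is precisely this ratio bound, uniformly over $m,k=n(1+O(1/L))$. One localizes both probabilities to the crossing window $j=\kappa^\star m+O(\sqrt n)$, where $\kappa^\star\in(q,p)$ is the Chernoff crossing point; there the consecutive ratio $\P[\Bin(m,q)=j+1]/\P[\Bin(m,q)=j]=\tfrac{q(m-j)}{(1-q)(j+1)}$ is bounded away from $1$ (since $\kappa^\star$ is bounded away from $q$), so $\P[\Bin(m,q)>j]\le C\,\P[\Bin(m,q)=j]$ summand-by-summand, while the contribution of $j$ outside the window is super-exponentially smaller and negligible. This produces $\gamma_0=\gamma_0(p,q)>0$ and hence $c=c(p,q)>0$. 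Combining everything, $p_r/p_\ell\ge \tfrac{L}{L+1}(1+c)(1+o(1))$, which exceeds $1+\tfrac1L$ once $L$ is large, proving \eqref{eq:prpl}.

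The main obstacle is the last estimate. What is genuinely needed is a comparison of $p_{-+}$ and $p_{+-}$ that is tight \emph{up to constants}, not merely up to polynomial factors, so that it survives down to $\Delta=1$: a naive large-deviation (Chernoff) estimate determines each of $p_{-+},p_{+-}$ only up to $\exp(O(\log n))$ prefactors, which would swamp the $O(1)$ exponential gain in the exponent when $\Delta=O(1)$. The telescoping identities are the device that converts the problem into the clean ratio bound $\P[=]/\P[>]\ge\gamma_0$, avoiding any prefactor loss; establishing that bound uniformly requires the careful localization of the crossing described above rather than a black-box tail inequality.
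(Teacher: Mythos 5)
Your proposal is correct and follows essentially the same route as the paper: the pairwise decoupling (conditioning on the single shared edge) plus Bonferroni gives $p_r=(1+o(1))np_{-+}$ and $p_\ell=(1+o(1))(n+\Delta)p_{+-}$ exactly as in the paper, and retaining only the first step of your telescoping identity, $p_{-+}\ge p_{+-}+q\,\P[S=D_+]$ with $S\sim\Bin(n,q)$, $D_+\sim\Bin(n+\Delta-1,p)$, is precisely the paper's stochastic-dominance/coupling inequality, leaving the identical crux $\P[S=D_+]/\P[S>D_+]\ge 1/L$. The only cosmetic difference is how that crux is finished: you sum a geometric series of consecutive binomial ratios directly inside the crossing window (your ``super-exponentially smaller'' outside-window claim should really be ``smaller by a large constant or polynomial factor,'' which suffices), whereas the paper localizes the sums and then uses log-concavity of the PMF of $S-D_+$ to compare the tail with the point mass; both are valid.
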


\begin{proof}
We label the vertices in $ \Vp{0} $ by $ i \in [n+\Delta] $ and vertices in $ \Vm{0} $ by $ j \in [n] $. Recall that the degree sequences of the graph $ \SBM(n+\Delta,n,p,q) $ are sampled from three independent random graphs $ \sfG(n+\Delta,p) $, $ \sfG(n,p) $ and $ \sfG(n+\Delta,n,q) $, and we denote by $ \bD_{+} $, $ \bD_{-} $, and $ (\bS,\bT) $ the degree sequences of these subgraphs, respectively.

Note that
\begin{multline*}
\prob{|\Vm{0}\cap\Vp{1}|=1} \leq \sum_{j=1}^n \prob{j \in \Vm{0}\cap\Vp{1}} = n \, \P \pth{\bT(1) > \bD_{-}(1)}\\ 
= n\, \prob{\Bin(n+\Delta,q) > \Bin(n-1,p)} = n p_{-+},
\end{multline*}
where $ p_{-+} $ is defined in \eqref{eq:prbpbr}.
On the other hand, by the union bound,
\begin{align*}
\prob{|\Vm{0}\cap\Vp{1}|=1} &= \sum_{j=1}^n \prob{ j \in \Vm{0}\cap\Vp{1},\ \mbox{and}\ k \notin \Vm{0}\cap\Vp{1}  \text{ for all } k \neq j}\\
&\geq \sum_{j=1}^n \Bigg( \prob{j \in \Vm{0}\cap\Vp{1}} - \sum_{\substack{1\leq k\leq n\\ k \neq j}} \prob{j,k \in \Vm{0}\cap\Vp{1}} \Bigg)\\
&= \sum_{j=1}^n \prob{ j \in \Vm{0}\cap\Vp{1}} - \sum_{\substack{1\leq j,k\leq n\\ k\neq j}} \prob{j,k \in \Vm{0}\cap\Vp{1}}.
\end{align*}
To estimate the second term, we observe that for $j\neq k$,
$$ \prob{j,k \in \Vm{0}\cap\Vp{1}} \leq \tprob{j,k \in \Vm{0}\cap\Vp{1}} $$
where $ \widetilde{\P} $ is the probability measure of the graph $ G \sim \SBM(n+\Delta,n,p,q) $ conditioned on $ (j,k) \notin E $. Let $ \widetilde{\bD}_+ $, $ \widetilde{\bD}_- $, and $ (\widetilde{\bS},\widetilde{\bT}) $ denote the degree sequences with respect to $ \widetilde{\P} $. Then,
\begin{align*}
\widetilde{\P} \pth{j,k \in \Vm{0}\cap\Vp{1}} &\leq \widetilde{\P} \pth{ \widetilde{\bT}(j) > \widetilde{\bD}_{-}(j) }~ \widetilde{\P} \pth{ \widetilde{\bT}(k) > \widetilde{\bD}_{-}(k) }\\
&\leq \pth{\prob{\Bin(n+\Delta,q) > \Bin(n-2,p)}}^2\\
&\leq L \pth{\prob{\Bin(n+\Delta,q) > \Bin(n-1,p)}}^2\\
&\leq L p_{-+}^2.
\end{align*}
This implies that
\begin{equation*}
\prob{|\Vm{0}\cap\Vp{1}|=1} \geq n p_{-+} - L n^2 p_{-+}^2 = (1-o(1)) n p_{-+},
\end{equation*}
where the last step follows from \eqref{eq:expsmall}.
Therefore, we conclude that
$$ p_r = \prob{|\Vm{0}\cap\Vp{1}|=1} = (1-o(1)) n p_{-+}, $$
and a similar argument yields
$$ p_\ell = (1-o(1)) (n+\Delta) p_{+-}. $$
It follows that
$$ \frac{p_r}{p_\ell} \geq (1-o(1)) \frac{n p_{-+}}{(n+\Delta) p_{+-}} \geq (1-o(1)) \pth{1-\frac{1}{L_2+1}} \frac{p_{-+}}{p_{+-}} $$
where we used $\Delta\leq n/L_2$ with $L_2$ a large constant to be determined.

Consider the independent random variables $ T \sim \Bin(n+\Delta,q) $, $ S \sim \Bin(n,q) $, $ D_- \sim \Bin(n-1,p) $ and $ D_+ \sim \Bin(n+\Delta-1,p) $. By choosing $L_2$ above large enough, in order to prove \eqref{eq:prpl}, it suffices to show
\begin{equation}\label{eq:Ratio_p_rp_l}
 \frac{\prob{T>D_-}}{\prob{S>D_+}} \geq 1+\frac{1}{L}.
\end{equation}

Since $\Bin(n-1,p)\leq_{\rm st} \Bin(n+\Delta-1,p)$, 
we may replace $D_-$ in the numerator by $D_+$. Similarly, using stochastic dominance again, we may replace $ T $ with $ T' \sim \Bin(n+1,q) $. By a suitable coupling, we may assume
$T'=S+\epsilon$  where $\epsilon\sim\Bin(1,q)$ is independent from $S$, and $(S,T')$ and $D_+$ are independent. It follows that
\begin{equation}\label{eq:RatioReduction}
\frac{\prob{T>D_-}}{\prob{S>D_+}}-1 \geq \frac{\prob{T>D_+}}{\prob{S>D_+}}-1 \geq \frac{\prob{T' > D_+}}{\prob{S>D_+}}-1 = \frac{\prob{S=D_+,\epsilon=1}}{\prob{S>D_+}} = \frac{q \, \prob{S=D_+}}{\prob{S>D_+}}.
\end{equation}
To estimate the right-hand side of \eqref{eq:RatioReduction}, note that
\begin{align*}
\frac{\prob{S=D_+}}{\prob{S=D_{+}+1}} = \frac{ \sum_{x=0}^{n+\Delta-1} \prob{D_+=x} \prob{S=x} }{ \sum_{x=0}^{n+\Delta-1} \prob{D_+=x} \prob{S=x+1} }.
\end{align*}
Using log-concavity of the functions $x\mapsto \prob{D_+=x} \prob{S=x} $ and $x\mapsto \prob{D_+=x} \prob{S=x+1} $ and that $\Delta\leq n/L_2$, it is not hard to show that there exist $ q_1,q_2 $ depending only on $p,q$ satisfying $ nq < nq_1 < nq_2 < (n+\Delta)p $ so that
$$ \frac{ \sum_{x=0}^{n+\Delta-1} \prob{D_+=x} \prob{S=x} }{ \sum_{x=0}^{n+\Delta-1} \prob{D_+=x} \prob{S=x+1} } = (1+o(1)) \frac{ \sum_{x=nq_1}^{nq_2} \prob{D_+=x} \prob{S=x} }{ \sum_{x=nq_1}^{nq_2} \prob{D_+=x} \prob{S=x+1} }, $$
and for some $L_3>0$,
$$ \frac{\prob{S=x}}{\prob{S=x+1}} = (1+o(1)) \frac{(1-q)x}{q(n-x)} \geq 1+\frac{1}{L_3},\ \ \mbox{for}\ x \in [nq_1,nq_2]\cap\Z.$$This implies that
\begin{equation}\label{eq:S_Decay}
\frac{\prob{S=D_+}}{\prob{S=D_{+}+1}} \geq 1+\frac{1}{L_3}.
\end{equation}
Recall that the probability mass function of a binomial distribution is log-concave, and that a convolution of log-concave functions is log-concave. This implies the probability mass function of $S-D_+$ is log-concave. Combined with \eqref{eq:S_Decay}, we obtain
\begin{align*} \prob{S>D_+} \leq \sum_{x=1}^{n+\Delta-1} \prob{S-D_+=x} &\leq \sum_{x=1}^{n+\Delta-1} \pth{\frac{L_3}{L_3+1}}^{x-1} \prob{S=D_++1} \\
&\leq (L_3+1) \prob{S=D_++1}. \end{align*}
Consequently, we have
$$ \frac{\prob{S=D_+}}{\prob{S>D_+}} \geq \frac{\prob{S=D_+}}{(L_3+1) \prob{S=D_++1}} \geq \frac{1}{L}. $$
Plugging back into \eqref{eq:RatioReduction}  yields the desired \eqref{eq:Ratio_p_rp_l}.

To show \eqref{eq:not>2}, by the union bound, for $i\neq j$,
$$ \prob{|\Vm{0}\cap\Vp{1}| \geq 2} \leq n^2 \, \prob{i,j \in \Vm{0}\cap\Vp{1}}, $$
and recall that for $i\neq j$,
$$ \prob{i,j \in \Vm{0}\cap\Vp{1}} \leq L p_{-+}^2. $$
Therefore, it holds that
$$ \frac{\prob{|\Vm{0}\cap\Vp{1}| \geq 2}}{p_r} \leq \frac{L n^2 p_{-+}^2}{(1-o(1))n p_{-+}} \leq L \exp \pth{-\frac{n}{L}}, $$
where the last step follows from \eqref{eq:expsmall}.
Similarly, we also have
$$ \frac{\prob{|\Vp{0}\cap\Vm{1}| \geq 2}}{p_\ell} \leq L \exp \pth{-\frac{n}{L}}. $$
Combining these together yields the desired result.
 \end{proof}

\begin{proof}[Proof of Theorem \ref{thmmodel1}] 
It is clear that if $p=1$, $|V_+^{(t)}|$ is nondecreasing and $\P(\sP)=1$, so in the rest of this proof we assume $p<1$.

Part (\ref{1i}). The proof gives the asserted bound for all $n\ge n_0(p,q)$, uniformly in $\Delta\ge 1$.
For the finitely many values $n<n_0(p,q)$, the same lower bound follows after increasing $L$,
since for each fixed $n$, the probability is strictly larger than $1/2$ for every
$\Delta\ge 1$ and is increasing in $\Delta$ by standard coupling arguments.

 We fix $p,q\in [0,1]$ with $0<q<p\leq 1$ and $\Delta=1$, where the general case $\Delta\geq 1$ can be established using exactly the same proof with the same constant $L$. In this case, $\{|V^{(t)}_+|\}$ is a Markovian random walk on $\{0,1,\dots,2n+1\}$ whose transition probabilities are symmetric along $n+1/2$.

Recall \eqref{eq:pj}. By Lemma \ref{lemma:probratio}, there exists $L_2$ such that the probabilities $p_r(j)$ and $p_\ell(j)$ at point $j\in[n+1,n+n/L_2]\cap\Z$ satisfy $p_r(j)/p_\ell(j)\geq 1+1/L$ uniformly. It follows from Proposition \ref{prop:model2winseventually} that given the random walk $\{|V^{(t)}_+|\}$ reaches $n+n/L_2$, $\sP$ happens with probability $1-o(1)$. Thus by Markov property and symmetry of the random walk, it suffices to prove that $\{|V^{(t)}_+|\}$ reaches $n+n/L_2$ before $n$ with probability at least $1/L$. Define $\tau$ as the hitting time of the random walk with boundaries $n+n/L_2$ and $n$. Note that for $t<\tau$ on the event $A_t:=\{|V^{(t)}_-\cap V^{(t+1)}_+|\leq 1\text{ and }|V^{(t)}_+\cap V^{(t+1)}_-|\leq 1\}$, 
$$\frac{\P\left(|V^{(t+1)}_+|-|V^{(t)}_+|=1\right)}{\P\left(|V^{(t+1)}_+|-|V^{(t)}_+|=-1\right)}= \frac{\P\left(|V^{(t)}_-\cap V^{(t+1)}_+|=1\text{ and }|V^{(t)}_+\cap V^{(t+1)}_-|=0\right)}{\P\left(|V^{(t)}_-\cap V^{(t+1)}_+|=0\text{ and }|V^{(t)}_+\cap V^{(t+1)}_-|=1\right)}\geq \frac{p_r(|V^{(t)}_+|)}{p_\ell(|V^{(t)}_+|)}\geq 1+\frac{1}{L}.$$

We say that the random walk performs a non-trivial move at time $t$ if $|V^{(t+1)}_+|\neq |V^{(t)}_+|$. Let $\sigma_0=0$ and, recursively, let $\sigma_k$ be the time of the $k$-th non-trivial move. Consider the embedded chain $Y_k:=|V^{(\sigma_k)}_+|$, and let $\tau_*$ be the hitting time of the embedded chain with boundaries $n+n/L_2$ and $n$. Define the event
$$A=\bigcap_{\left\{k< \tau_*\right\}} A_{\sigma_k}.$$
This means that before the embedded chain is stopped, every non-trivial move has step length one. Taking intersection with the event $A$, it follows from a Gambler's Ruin argument (e.g., \cite[Theorem 1]{chong2000ruin}) applied to the embedded chain that $\P(Y_{\tau_*}=n+n/L_2)\geq 1/L$. It then suffices to prove that $\P(A)=1-o(1)$. By Lemma \ref{lemma:probratio} and a union bound over embedded moves, with probability $1-o(1)$, each step length of the first $n^3$ non-trivial moves of $\{|V^{(t)}_+|\}$ is one. Intersecting on this event, the probability that $\tau_*<n^3$, hence $A$ occurs, is $1-o(1)$ due to standard hitting-time estimates for the embedded nearest-neighbor walk (e.g., \cite{feller2008introduction}). This completes the proof of (\ref{1i}).

Part (\ref{1ii}). Assume $\Delta_n\to\infty$ and pick another sequence  $\kappa(n)\to\infty$ such that $\kappa(n)=o(\Delta_n)$. Let $L_2$ be given as in Lemma \ref{lemma:probratio}. By Proposition \ref{prop:model2winseventually}, it suffices to show that the Markovian random walk $\{|V^{(t)}_+|\}$ reaches $n+\Delta+n/L_2$ before $n+\Delta-\kappa(n)$ with probability tending to one. Using similar arguments as in the proof of (\ref{1i}), we may intersect with the event that when the random walk moves, it is either the case  $|V^{(t)}_-\cap V^{(t+1)}_+|=1\text{ and }|V^{(t)}_+\cap V^{(t+1)}_-|=0$, or the case $|V^{(t)}_-\cap V^{(t+1)}_+|=0\text{ and }|V^{(t)}_+\cap V^{(t+1)}_-|=1$. Since 
$$\frac{\P\left(|V^{(t)}_-\cap V^{(t+1)}_+|=1\text{ and }|V^{(t)}_+\cap V^{(t+1)}_-|=0\right)}{\P\left(|V^{(t)}_-\cap V^{(t+1)}_+|=0\text{ and }|V^{(t)}_+\cap V^{(t+1)}_-|=1\right)}\geq 1+\frac{1}{L}$$ uniformly given $n+\Delta-\kappa(n)\leq |V^{(t)}_+|\leq n+\Delta+n/L_2$, a standard Gambler's Ruin estimate (e.g., \cite[Theorem 1]{chong2000ruin}) completes the proof.
\end{proof}

\subsection{Proof of Theorem \ref{thmmodel2}}

For $v\in \Vm{0}$, define $S_v:=d_-^{(0)}(v)-d_+^{(0)}(v)$.  For a fixed $v$, $S_v\stackrel d=Y_n-X_n$, where $Y_n\sim\Bin(n-1,p)$ and $X_n\sim\Bin(n+\Delta_n,q)$ are independent.  Let $A_t:=\Vm{0}\cap \Vp{t}$ be the initially negative vertices that have become plus by day $t$, and let $\sN:=\{\exists t\ge1:\ \Vp{t-1}\cap \Vm{t}\ne\varnothing\}$. Assume that $\Delta_n\in\N$. 
By Proposition \ref{prop:no+to-},
\begin{equation}\label{eq:no-back}
    \P(\sN)\to0.
\end{equation}
We will implicitly intersect with the event $\sN^c$ throughout the proofs below.

\begin{lemma}\label{lem:reservoir}
Assume that $\Delta_n\in\N$. For $a=a(n)>0$, set $R_a:=\{v\in \Vm{0}:S_v<2a\}$.  If $\sN^c$ occurs and there exists a sequence $a=a(n)=o(n)$ such that $|R_a|<a$, then the dynamics halts without reaching consensus.
\end{lemma}

\begin{proof}
Assume $\sN^c$ and fix a sequence $a=a(n)=o(n)$ such that $|R_a|<a$.
On $\sN^c$, $A_t$ is increasing.  If $v\in \Vm{0}\setminus A_t$, then relative to day $0$ only vertices in $A_t$ have changed opinion.  Each changed vertex can decrease the margin $d_-(v)-d_+(v)$ by at most $2$: a present negative-neighbor edge can become a present positive-neighbor edge.  Thus the current margin of $v$ at day $t$ is at least $S_v-2|A_t|$.  If $v$ flips from minus to plus at the next update, its current margin is strictly negative, so $S_v<2|A_t|$.

Every vertex in $A_1$ has $S_v<0$ and hence lies in $R_a$.  Inductively, while $|A_t|<a$, every newly flipped vertex lies in $R_a$.  If $|A_t|$ first reached $a$, the first reaching set would be a subset of $R_a$, contradicting $|R_a|<a$.  Hence $|A_t|<a$ for all $t$.

Since $a=o(n)$, plus cannot absorb all initially negative vertices.  On $\sN^c$, minus cannot win because all initially positive vertices remain plus.  Since $A_t$ is an increasing sequence of subsets of the finite set $R_a$, it stabilizes.  Once it stabilizes, no opinions change and no edges are resampled, so the process halts.
\end{proof}


\begin{proof}[Proof of Theorem \ref{thmmodel2}]

\sloppy Part (\ref{2i}). Assume \eqref{eq:i}, equivalently $$\Delta'_n\ge H\sqrt{n\log n}+\varepsilon\frac{\sqrt n\log\log n}{\sqrt{\log n}}.$$ 
If $\Delta'_n>2H\sqrt{n\log n}$, let $a_n=\sqrt n/\log n$.  Then $\E[S_v]\ge2\tau\sqrt{n\log n}(1+o(1))$.  Since $\Delta_n\le (p-q) n/q$, we have  $n+\Delta_n\le pn/q$, and therefore $\Var(S_v)\le \tau^2n+O(1)$.  Bernstein's inequality gives $\P(S_v<2a_n)\le n^{-4/3}$
for all sufficiently large $n$, and hence the expected number of vertices with $S_v<2a_n$ is $o(a_n)$.  Lemma~\ref{lem:reservoir} and \eqref{eq:no-back} imply halting.

It remains to consider $$H\sqrt{n\log n}+\varepsilon\frac{\sqrt n\log\log n}{\sqrt{\log n}}\le\Delta'_n\le2H\sqrt{n\log n}.$$  Choose $0<\rho<\varepsilon/H$. Recall \eqref{eq:defs}. Since $C'H^2=1/2$, expanding the square gives $I_n\ge \frac12\log n+\rho\log\log n$ for all sufficiently large $n$.  Define $a_n=\sqrt n/(\sqrt{\log n}(\log n)^{\rho/2})$ and $R_n=\{v\in \Vm{0}:S_v<2a_n\}$.
By Lemma~\ref{lem:local},
\[
    \P(S_v<0)\le Cn^{-1/2}(\log n)^{-\rho-1/2}
\]
and
\[
    \P(0\le S_v<2a_n)
    \le C\frac{a_n}{\sqrt n}n^{-1/2}(\log n)^{-\rho}.
\]
Therefore,
\[
    \frac{\E[|R_n|]}{a_n}
    \le
    C(\log n)^{-\rho/2}+C(\log n)^{-\rho}\to0.
\]
Markov's inequality gives $|R_n|<a_n$ a.a.s.  Since $a_n=o(n)$, Lemma~\ref{lem:reservoir} and \eqref{eq:no-back} prove halting a.a.s.

Part (\ref{2ii}).  This follows directly from Proposition \ref{prop1lastdaywins}, since the first step is the same for both Markovian and non-Markovian models. 

Part (\ref{2iii}).  We may assume by (\ref{2ii}) that $$\Delta_n\leq \left(\frac{p-q}{q}\right)n+L\sqrt{n\log n}.$$ By Proposition \ref{prop2lastdaywins}, it suffices to prove that for some $\delta>0$, 
\begin{align}
    \P(|V^{(0)}_-\cap V^{(1)}_+|\leq \delta n^{(\delta+1)/2})=o(1).\label{eq:tp}
\end{align}

Suppose first that $p<1$. By Lemma \ref{lem:True_to_Conditioned2}, it suffices to show for some $\delta>0$ that for all $r_2\in 
   [0,1]\cap[p-L(\log n)/n,p+L(\log n)/n]$, $$\P_{\substack{\calB_{r_2}^n, \calB_{q}^{n+\Delta,n} }}\left(|V^{(0)}_-\cap V^{(1)}_+|\leq \delta n^{(\delta+1)/2}\right)=o(1).$$ 
   Recall our assumption that for some $\delta_0>0$,
   $$\Delta'=\frac{(p-q)n}{q}-\Delta\leq \sqrt{\frac{(1/2-\delta_0)n\log n}{C'}}.$$If $\Delta'<0$, the probability of a first-day flip from $-$ to $+$ is bounded below by the same probability with $\Delta'=0$, so the following estimate applies after replacing $\Delta'$ by $0$. If $\Delta'\in\N$, Lemma \ref{lemmaBinomialtail2} applies directly. In either case,
   $$\P(\Bin(n-1,r_2)<\Bin(n+\Delta,q))\geq \frac{1}{L} n^{(\delta_0-1)/2}.$$
   Thus by picking $\delta$ small enough, we have
   \begin{align*}\P_{\substack{\calB_{r_2}^n, \calB_{q}^{n+\Delta,n} }}\Bigg(|V^{(0)}_-\cap V^{(1)}_+|&\leq \delta n^{(\delta+1)/2}\Bigg)\\
       &=\P\left(\Bin\left(n,\P(\Bin(n-1,r_2)<\Bin(n+\Delta,q))\right)\leq \delta n^{(\delta+1)/2}\right)=o(1).
   \end{align*}
  This concludes the proof.

When $p=1$, a vertex in $V^{(0)}_-$ flips on the first day whenever $\Bin(n+\Delta,q)>n-1$. If $\Delta'<0$, where now $\Delta'=((1-q)/q)n-\Delta$, this probability is bounded below by the corresponding probability at $\Delta'=0$. Otherwise, Lemma \ref{lemmaBinomialtail1} implies that for some $\delta_2>0$,
$$\P(\Bin(n+\Delta,q)>n-1)\geq n^{-1/2+\delta_2}$$
for all sufficiently large $n$. Hence $|V^{(0)}_-\cap V^{(1)}_+|$ stochastically dominates a binomial random variable with mean at least $n^{1/2+\delta_2}$, and \eqref{eq:tp} follows from a Chernoff bound by choosing $\delta>0$ sufficiently small.

Part (\ref{2iv}). Assume \eqref{eq:v}, or equivalently $$\Delta'_n\le H\sqrt{n\log n}-\varepsilon\frac{\sqrt n\log\log n}{\sqrt{\log n}}.$$  If $\Delta'_n\le (H/2)\sqrt{n\log n}$, part (\ref{2iii}) gives $\sP_2$ a.a.s.  Hence, assume $$\frac{H}{2}\sqrt{n\log n}\le\Delta'_n\le H\sqrt{n\log n}-\varepsilon\frac{\sqrt n\log\log n}{\sqrt{\log n}}.$$  Choose $0<\rho<\varepsilon/H$.  Then $I_n\le\frac12\log n-\rho\log\log n$ for all large $n$.

Recall that $A_1=\Vm{0}\cap\Vp{1}=\{v\in \Vm{0}:S_v<0\}$.
By Lemmas~\ref{lem:local} and \ref{lem:counts},
\[
    |A_1|\ge m_n:=\frac{\sqrt n}{\sqrt{\log n}}(\log n)^\beta
\]
a.a.s. for any fixed $0<\beta<\min\{\rho,1/2\}$.  Note that $m_n\gg\log n$.

Define $B_n=\{v\in \Vm{0}:1\le S_v\le qm_n/4\}$.
For $x_n=qm_n/4$, we have $x_n=o(\sqrt n)$ and $\lambda_nx_n\asymp(\log n)^\beta\to\infty$.  Lemmas~\ref{lem:local} and \ref{lem:counts} imply that, for every fixed $M>0$, $|B_n|\ge M\sqrt{n\log n}$ a.a.s., because
\[
    \E[|B_n|]
    \ge
    c\sqrt n(\log n)^{\rho-1/2}\exp(c(\log n)^\beta).
\]

Condition on the initial graph and on $A_1$.  For $v\in B_n$, let $\xi_v$ be the number of fresh resampled edges from $A_1$ to $v$ after the day-one opinion update.  On $|A_1|\ge m_n$, $\xi_v\geq_{\rm st}\Bin(m_n,q).$ 
Chernoff's inequality and $m_n\gg\log n$ imply, by a union bound over all $v$, that $\xi_v\ge qm_n/2$ for every $v\in B_n$ a.a.s.  The current margin of $v$ after day one is at most $S_v-\xi_v$, because the old edges from $A_1$ can only further reduce the margin.  Thus every $v\in B_n$ has negative margin and flips on day two.  Therefore, for every fixed $M>0$,
\[
    |C_2|:=|\Vm{1}\cap \Vp{2}|
    \ge M\sqrt{n\log n}
\]
a.a.s.

Finally, choose $K=K(p,q)$ such that $\max_{v\in \Vm{0}}S_v\le K\sqrt{n\log n}$ a.a.s.  This follows from Bernstein's inequality and a union bound, since in the present range $\E[S_v]\le\tau\sqrt{n\log n}+o(\sqrt{n\log n})$ and $\Var(S_v)=O(n)$.  Choose $M$ so large that $qM/2>K$.  Conditional on the history up to day two, each remaining negative vertex receives independent fresh cross-opinion edges from $C_2$.  If $\eta_v$ is the number of such edges, then $\eta_v\geq_{\rm st}\Bin(|C_2|,q),$ 
and Chernoff's inequality plus a union bound gives
\[
    \eta_v\ge\frac q2|C_2|>K\sqrt{n\log n}\ge S_v
\]
for every remaining negative vertex.  Therefore, every remaining negative vertex flips on day three.  Intersecting with $\sN^c$, no plus vertex flips to minus, so plus wins by day three.

Part (\ref{2v}) is a direct consequence of Proposition \ref{prop:no+to-}. 
\end{proof}

\section{Numerical Experiments}\label{sec:Num}

In this section, we present some numerical experiments to validate our main theorems and illustrate the sharp phase transition in the non-Markovian model. Throughout this section, we run simulations of the dynamics and check the frequencies of various outcomes. Recall that $ n $ represents the number of vertices with opinion $ - $ in the initialization, and $ \Delta $ is the initial bias.


\subsection{Markovian model}

For the Markovian model, we first focus on the power-of-one phenomenon. In Table \ref{tab:delta1}, we run the Markovian model with $\Delta=1, p=0.5, q=0.3$ and let the initial number of vertices with opinion $ - $ increase. 



\begin{table}[!htb]
    \centering{\scalebox{0.9}{
    \begin{tabular}{|c|cccccccc|}
     \hline
     \textbf{Number of initial ``$-$''} & 50 &100 & 125 &150 & 175 & 200 & 225 & 250\\ 
     \hline
      \textbf{Winner} & ``$+$'' & ``$+$''& ``$+$''& ``$+$''& ``$+$''& ``$+$''& ``$+$''& ``$+$''\\ \hline
      \textbf{Average last day} & 7.68 & 26.90 & 62.21 & 146.36 & 380.47 & 1025.91 & 2757.31 & 6152.87 \\ \hline 
      \textbf{Frequency} & 717 & 677 & 679 & 679 & 687 & 625 & 656 & 633\\ 
      \hline 
    \end{tabular}}}
    \caption{Simulation of the Markovian model when $\Delta=1, p=0.5, q=0.3$. \label{tab:delta1}
    }
    \end{table}


\noindent As shown in Table \ref{tab:delta1}, the frequency of the opinion + winning is greater than half of all instances uniformly. This agrees with Part (\ref{1i}) in Theorem \ref{thmmodel1}, that is, a single initial bias already leads to a non-trivial advantage for winning in the end.

Next, recall that Part (\ref{1ii}) of Theorem \ref{thmmodel1} states that any initial bias of $ \omega(1) $ will guarantee an asymptotically almost sure win. To verify this, we simulate the dynamics with $ p=0.5,q=0.3 $ and choose two initial biases with different growth rates $ \Delta(n)=\lceil\log n\rceil $ and $ \Delta(n)=\lceil n/10 \rceil $. The results are listed below.

    \begin{table}[!htb]
    \centering{\scalebox{0.9}{
    \begin{tabular}{|c|cccccccc|}
     \hline
     \textbf{Number of initial ``$-$''} & 50 &100 & 125 &150 & 175 & 200 & 225 & 250\\ 
     \hline
     \textbf{$\bm{\Delta=\lceil\log n\rceil}$} & 4 & 5 & 5 & 6 & 6 & 6 & 6 & 6\\ 
     \hline
      \textbf{Winner} & ``$+$'' & ``$+$''& ``$+$''& ``$+$''& ``$+$''& ``$+$''& ``$+$''& ``$+$''\\ \hline
      \textbf{Average last day} & 5.79 & 16.17 & 35.30 & 70.38 & 183.80 & 445.94 & 1288.29 & 3463.56
      \\ \hline 
      \textbf{Frequency} & 982 & 989 & 988 & 998 & 994 & 999 & 991 & 990\\ 
      \hline 
    \end{tabular}}}
    \caption{Simulation of the Markovian model when $\Delta=\lceil \log n \rceil, p=0.5, q=0.3$. \label{tab:delta_ln}
    }
    \end{table}
    \begin{table}[!htb]
    \centering{\scalebox{0.9}{
    \begin{tabular}{|c|cccccccc|}
     \hline
     \textbf{Number of initial ``$-$''} & 50 &100 & 125 &150 & 175 & 200 & 225 & 250\\ 
     \hline
     {$\bm{\Delta=\lceil n/10 \rceil}$} & 5 & 10 & 13 & 15 & 18 & 20 & 23 & 25\\ 
     \hline
      \textbf{Winner} & ``$+$'' & ``$+$''& ``$+$''& ``$+$''& ``$+$''& ``$+$''& ``$+$''& ``$+$''\\ \hline
      \textbf{Average last day} & 3.73 & 9.75 & 14.46 & 24.58 & 41.39 & 78.69 & 144.20 & 288.41
      \\ \hline 
      \textbf{Frequency} & 1000& 1000& 1000& 1000& 1000& 1000& 1000& 1000\\ 
      \hline 
    \end{tabular}}}
    \caption{Simulation of the Markovian model when $\Delta=\lceil n/10 \rceil, p=0.5, q=0.3$. \label{tab:delta_c}
    }
    \end{table}
    
\noindent In Table \ref{tab:delta_ln}, the frequency of opinion $ + $ winning is greater than 98\% of all outcomes. Similarly, in Table \ref{tab:delta_c}, all simulations end up with opinion $ + $ winning. These confirm the theoretical result in Part (\ref{1ii}) of Theorem \ref{thmmodel1}.

Moreover, from the above results, we see that though a greater $ \Delta $ reduces the averaged time needed for consensus, the consensus time still exhibits an exponential growth in $ n $. This confirms our statement in Remark \ref{rmk:time}.

\subsection{Non-Markovian model}

We next run larger-size simulations for the non-Markovian model. For each listed
value of $n$, we set
$$ \Delta(n)= \left\lceil \pth{\frac{p-q}{q}}n - L \sqrt{n \log n} \right\rceil, $$
with varying constants $ L>0 $. As in Theorem \ref{thmmodel2}, we focus on the outcomes near
the critical value
$$ L^*=H(p,q)=\frac{\sqrt{p(2-p-q)}}{q}. $$
In our simulations, the last day of the dynamics is averaged over all instances
where consensus is reached. The frequency is given in the format $a/b$, meaning
that opinion $+$ wins $a$ times and the dynamics halts $b$ times.

\begin{table}[h]
\centering{\scalebox{0.74}{
\begin{tabular}{|cccccc||cccccc|}
\hline
\multicolumn{6}{|c||}{\textbf{Parameter block 1}} & \multicolumn{6}{c|}{\textbf{Parameter block 2}}\\ \hline
$\bm{n}$ & $\bm{L}$ & $\bm{\Delta}$ & \textbf{Winner} & \textbf{Last day} & \textbf{Frequency} &
$\bm{n}$ & $\bm{L}$ & $\bm{\Delta}$ & \textbf{Winner} & \textbf{Last day} & \textbf{Frequency}\\ \hline
1000 & 2.5 & 2126 & ``$+$''/Halt & 4.58 & 997/3 & 10000 & 2.5 & 22575 & ``$+$''/Halt & 4.02 & 1000/0\\ \hline
1000 & 2.7 & 2109 & ``$+$''/Halt & 6.31 & 846/154 & 10000 & 2.7 & 22514 & ``$+$''/Halt & 5.81 & 998/2\\ \hline
1000 & 2.788 & 2102 & ``$+$''/Halt & 7.23 & 640/360 & 10000 & 2.788 & 22488 & ``$+$''/Halt & 7.80 & 927/73\\ \hline
1000 & 2.8 & 2101 & ``$+$''/Halt & 7.41 & 596/404 & 10000 & 2.8 & 22484 & ``$+$''/Halt & 8.00 & 863/137\\ \hline
1000 & 3.0 & 2084 & ``$+$''/Halt & 9.36 & 69/931 & 10000 & 3.0 & 22423 & ``$+$''/Halt & 13.88 & 17/983\\ \hline
2000 & 2.5 & 4359 & ``$+$''/Halt & 4.39 & 1000/0 & 20000 & 2.5 & 45555 & ``$+$''/Halt & 4.00 & 1000/0\\ \hline
2000 & 2.7 & 4334 & ``$+$''/Halt & 6.27 & 920/80 & 20000 & 2.7 & 45466 & ``$+$''/Halt & 5.58 & 1000/0\\ \hline
2000 & 2.788 & 4323 & ``$+$''/Halt & 7.54 & 719/281 & 20000 & 2.788 & 45426 & ``$+$''/Halt & 7.64 & 974/26\\ \hline
2000 & 2.8 & 4322 & ``$+$''/Halt & 7.70 & 707/293 & 20000 & 2.8 & 45421 & ``$+$''/Halt & 8.04 & 949/51\\ \hline
2000 & 3.0 & 4297 & ``$+$''/Halt & 10.88 & 69/931 & 20000 & 3.0 & 45332 & ``$+$''/Halt & 15.50 & 2/998\\ \hline
5000 & 2.5 & 11151 & ``$+$''/Halt & 4.12 & 1000/0 & 50000 & 2.5 & 114828 & ``$+$''/Halt & 4.00 & 1000/0\\ \hline
5000 & 2.7 & 11110 & ``$+$''/Halt & 6.05 & 982/18 & 50000 & 2.7 & 114681 & ``$+$''/Halt & 5.33 & 1000/0\\ \hline
5000 & 2.788 & 11092 & ``$+$''/Halt & 7.67 & 838/162 & 50000 & 2.788 & 114617 & ``$+$''/Halt & 7.36 & 996/4\\ \hline
5000 & 2.8 & 11089 & ``$+$''/Halt & 8.02 & 791/209 & 50000 & 2.8 & 114608 & ``$+$''/Halt & 7.84 & 992/8\\ \hline
5000 & 3.0 & 11048 & ``$+$''/Halt & 12.59 & 34/966 & 50000 & 3.0 & 114461 & ``$+$''/Halt & 19.00 & 1/999\\ \hline
\end{tabular}}}
\caption{Simulation of the non-Markovian model with
$\Delta = \left\lceil \pth{\frac{p-q}{q}}n - L \sqrt{n \log n} \right\rceil$,
$p=1$, and $q=0.3$.}\label{tab:nonmarkovian-p1-larger}
\end{table}

We first focus on the special case $ p=1 $ and set $ q=0.3 $. In this case,
$ L^* \approx 2.788 $ is the critical value for the phase transition. The
simulations are listed below. As shown in Table \ref{tab:nonmarkovian-p1-larger}, when $ L<L^* $,
the opinion $ + $ wins with high frequency. In particular, opinion $ + $ wins in almost all simulations when $ L $ becomes smaller and the consensus becomes faster. On
the other hand, when $ L>L^* $, the dynamics will halt with high frequency, and
halting happens in almost all simulations as $ L $ gets larger. The rows near $L^*$ show the
finite-size transition behavior at the critical scale.

\begin{table}[h]
\centering{\scalebox{0.74}{
\begin{tabular}{|cccccc||cccccc|}
\hline
\multicolumn{6}{|c||}{\textbf{Parameter block 1}} & \multicolumn{6}{c|}{\textbf{Parameter block 2}}\\ \hline
$\bm{n}$ & $\bm{L}$ & $\bm{\Delta}$ & \textbf{Winner} & \textbf{Last day} & \textbf{Frequency} &
$\bm{n}$ & $\bm{L}$ & $\bm{\Delta}$ & \textbf{Winner} & \textbf{Last day} & \textbf{Frequency}\\ \hline
1000 & 2.0 & 501 & ``$+$''/Halt & 4.06 & 500/0 & 2000 & 2.0 & 1087 & ``$+$''/Halt & 3.98 & 100/0\\ \hline
1000 & 2.4 & 468 & ``$+$''/Halt & 8.07 & 350/150 & 2000 & 2.4 & 1038 & ``$+$''/Halt & 7.32 & 88/12\\ \hline
1000 & 2.5 & 459 & ``$+$''/Halt & 9.57 & 135/365 & 2000 & 2.5 & 1026 & ``$+$''/Halt & 10.10 & 39/61\\ \hline
1000 & 2.582 & 453 & ``$+$''/Halt & 10.78 & 45/455 & 2000 & 2.582 & 1015 & ``$+$''/Halt & 11.78 & 9/91\\ \hline
1000 & 2.6 & 451 & ``$+$''/Halt & 11.03 & 34/466 & 2000 & 2.6 & 1013 & ``$+$''/Halt & 12.38 & 8/92\\ \hline
1000 & 2.8 & 434 & ``$+$''/Halt & --- & 0/500 & 2000 & 2.8 & 989 & ``$+$''/Halt & --- & 0/100\\ \hline
1000 & 3.0 & 418 & ``$+$''/Halt & --- & 0/500 & 2000 & 3.0 & 964 & ``$+$''/Halt & --- & 0/100\\ \hline
5000 & 2.0 & 2921 & ``$+$''/Halt & 3.90 & 20/0 & 10000 & 2.0 & 6060 & ``$+$''/Halt & 3.00 & 20/0\\ \hline
5000 & 2.4 & 2839 & ``$+$''/Halt & 6.40 & 20/0 & 10000 & 2.4 & 5939 & ``$+$''/Halt & 6.25 & 20/0\\ \hline
5000 & 2.5 & 2818 & ``$+$''/Halt & 9.57 & 14/6 & 10000 & 2.5 & 5908 & ``$+$''/Halt & 8.95 & 19/1\\ \hline
5000 & 2.582 & 2801 & ``$+$''/Halt & 14.67 & 3/17 & 10000 & 2.582 & 5884 & ``$+$''/Halt & 11.50 & 6/14\\ \hline
5000 & 2.6 & 2797 & ``$+$''/Halt & 11.67 & 3/17 & 10000 & 2.6 & 5878 & ``$+$''/Halt & 15.00 & 3/17\\ \hline
5000 & 2.8 & 2756 & ``$+$''/Halt & --- & 0/20 & 10000 & 2.8 & 5817 & ``$+$''/Halt & --- & 0/20\\ \hline
5000 & 3.0 & 2715 & ``$+$''/Halt & --- & 0/20 & 10000 & 3.0 & 5757 & ``$+$''/Halt & --- & 0/20\\ \hline
\end{tabular}}}
\caption{Simulation of the non-Markovian model with
$\Delta = \left\lceil\pth{\frac{p-q}{q}}n - L \sqrt{n \log n}\right\rceil$,
$p=0.5,$ and $q=0.3$. }\label{tab:nonmarkovian-p05-larger}
\end{table}

We now simulate the case where $p=0.5$ and $q=0.3$. In this case, the critical
value for the phase transition is $ L^* \approx 2.582 $. The results of the
simulation are listed below. From Table \ref{tab:nonmarkovian-p05-larger}, we can see a similar
behavior of the outcomes as in the $p=1$ case. When $ L<L^* $, the opinion
$ + $ wins with high frequency. When $ L>L^* $, the dynamics will halt with high
frequency. The rows near $L^*$ display the finite-size transition behavior at
the critical scale.

    \section{Concluding Remarks}\label{sec:Remark}
In this paper, we introduced and analyzed two models for majority dynamics on stochastic block models. For the Markovian model, we showed that any initial bias of the opinions leads to a uniformly large advantage of the winning probabilities, and gave sufficient conditions for the leading opinion to win a.a.s. For the non-Markovian model, we analyze the phase transition between fast consensus and non-consensus behavior, which in the regimes proved here is realized by a halt of the dynamics. We identify the first and second leading-order terms that govern the critical phase. In addition, in the following we mention a few other interesting directions.

First, as  mentioned in the introduction, the $k$-majority dynamics model serves as an alternative to majority dynamics on a static random graph. It is not difficult to formulate  $k$-majority dynamics models on SBM with a community structure of those sharing the same opinion. In the literature of  $k$-majority dynamics, besides whether consensus is reached, the consensus time is also of great interest \cite{doerr2011stabilizing,becchetti2016stabilizing,cooper2015fast,
ghaffari2018nearly,shimizu2021phase}. The corresponding analogues require further careful studies.

Second, a model of a similar flavor as our non-Markovian model arises when considering the majority dynamics model for \ER graphs. Pick a constant $K>1$. Suppose that each agent alters their  opinion only if the number of neighbors holding the opposite opinion is at least $K$ times the number of neighbors holding their own opinion. Similar questions can be asked such as what difference $\Delta$ guarantees unanimity of the advantageous opinion, and how many days it would take. In this case, the block structure is  encoded locally among the agents, instead of globally on the graph.

Third, for technical reasons we have focused on the case where $p,q$ are constants in $(0,1]$ that do not depend on $n$. In certain voting models, the sparser homogeneous case $p=q\gg n^{-3/5}$ is considered; see \cite{chakraborti2021majority,benjamini2016convergence}. Extensions of our results in this more general case require further study, where we expect that the recent works of graph enumeration for sparse graphs \cite{liebenau2017asymptotic,liebenau2020asymptotic} will be applicable. Note that if $p=a(\log n)/n,\ q=b(\log n)/n$, exact recovery in the two-block stochastic block model is possible precisely above the threshold $(\sqrt{a}-\sqrt{b})^2>2$; see  \cite{abbe2015exact,mossel2014consistency}.


\bibliography{MajoritySBM}
\bibliographystyle{plain}

\end{document}